\newcommand\NN{\mathbb{N}}
\newcommand\RR{\mathbb{R}}
\newcommand\ZZ{\mathbb{Z}}
\newcommand\MM{\mathcal{M}}
\newcommand\FF{\mathcal{F}}
\newcommand\GG{\mathcal{G}}
\newcommand\PP{\mathcal{P}}
\newcommand\TT{\mathcal{T}}
\newcommand\UU{\mathcal{U}}
\newcommand\eps{{\varepsilon}}
\DeclareMathOperator\sP{P}   
\newcommand{\rP}{\mathrm{P}} 
\DeclareMathOperator\sE{E}   
\newcommand{\rE}{\mathrm{E}} 
\DeclareMathOperator\dist{dist}
\DeclareMathOperator\diam{diam}
\DeclareMathOperator\sgn{sgn}
\renewcommand{\mod}{\operatorname{mod}}
\DeclareMathOperator\graph{Graph}
\newcommand\rest{\!\!\downharpoonright}
\newtheorem{theorem}{Theorem}[section]
\newtheorem{corollary}[theorem]{Corollary}
\newtheorem{lemma}[theorem]{Lemma}
\newtheorem{remark}[theorem]{Remark}
\newtheorem{proposition}[theorem]{Proposition}
\numberwithin{equation}{section}
\newcommand{\address}{Address: Department of Mathematics, University of North Texas, 1155 Union Circle \#311430, Denton, TX 76203-5017, USA; E-mail: allaart@unt.edu}
\title{Hausdorff dimension of level sets of generalized Takagi functions}
\author{Pieter C. Allaart \footnote{\address}}
\begin{document}

\maketitle

\begin{abstract}

This paper examines the Hausdorff dimension of the level sets $f^{-1}(y)$ of continuous functions of the form
\begin{equation*}
f(x)=\sum_{n=0}^\infty 2^{-n}\omega_n(x)\phi(2^n x), \qquad 0\leq x\leq 1,
\end{equation*}
where $\phi(x)$ is the distance from $x$ to the nearest integer, and for each $n$, $\omega_n$ is a $\{-1,1\}$-valued function which is constant on each interval $[j/2^n,(j+1)/2^n)$, $j=0,1,\dots,2^n-1$. This class of functions includes Takagi's continuous but nowhere differentiable function. It is shown that the largest possible Hausdorff dimension of $f^{-1}(y)$ is $\log\big((9+\sqrt{105})/2\big)/\log 16\approx .8166$, but in case each $\omega_n$ is constant, the largest possible dimension is $1/2$. These results are extended to the intersection of the graph of $f$ with lines of arbitrary integer slope. Furthermore, two natural models of choosing the signs $\omega_n(x)$ at random are considered, and almost-sure results are obtained for the Hausdorff dimension of the zero set and the set of maximum points of $f$. The paper ends with a list of open problems.

\bigskip
{\it AMS 2000 subject classification}: 26A27, 28A78 (primary), 15A18, 15B48, 37H10 (secondary)

\bigskip
{\it Key words and phrases}: Generalized Takagi function, Gray Takagi function, Level set, Hausdorff dimension, Random Takagi function, Joint spectral radius, Random fractal
\end{abstract}

\section{Introduction}

Takagi's continuous nowhere differentiable function, shown in Figure \ref{fig:Takagi-and-Gray-Takagi}(a), is defined by
\begin{equation*}
T(x)=\sum_{n=0}^\infty \frac{1}{2^n}\phi(2^n x),
\end{equation*}
where $\phi(x)=\dist(x,\ZZ)$, the distance from $x$ to the nearest integer.
A great deal has been written about this function since its initial discovery in \cite{Takagi}; see recent surveys by Allaart and Kawamura \cite{AK2} and Lagarias \cite{Lagarias} for an overview of the literature. In the past few years, interest has focused mainly on the level sets $L(y):=\{x\in[0,1]:T(x)=y\}$, which have been shown to possess a rich structure. For instance, $L(y)$ is finite for Lebesgue-almost every $y$ \cite{Buczolich}, and can have any even positive integer as its cardinality \cite{Allaart2}. However, the ``typical" level set of $T$ (in the sense of Baire category) is uncountably large \cite{Allaart3,Allaart5}. These uncountable level sets can be further differentiated according to their Hausdorff dimension. Kahane \cite{Kahane} showed that $\max_x T(x)=2/3$, and $L(2/3)$ is a Cantor set of Hausdorff dimension $1/2$. De Amo et al. \cite{ABDF} recently proved that $1/2$ is the maximal Hausdorff dimension of any level set, settling a conjecture of Maddock \cite{Maddock}, who had earlier obtained a bound of $0.668$. Two interesting papers by Lagarias and Maddock \cite{LagMad1,LagMad2} use novel notions of `local level sets' and a `Takagi singular function' to establish several properties of the level sets of $T$. For instance, it is shown in \cite{LagMad2} that the set of $y$-values for which $L(y)$ has strictly positive Hausdorff dimension is a set of full Hausdorff dimension 1.

In this article we examine the level sets of a class of generalized Takagi functions of the form
\begin{equation}
f(x)=\sum_{n=0}^\infty\frac{\omega_n(x)}{2^n}\phi(2^n x), \qquad 0\leq x\leq 1,
\label{eq:our-functions}
\end{equation}
where
\begin{equation*}
\omega_n(x)\in\{-1,1\}, \quad\mbox{constant on each interval} \ \left[\frac{j}{2^n},\frac{j+1}{2^n}\right), \quad j=0,1,\dots,2^n-1.
\end{equation*}
Observe that $\omega_n$ can jump only at points $x$ where $\phi(2^n x)=0$, so the terms of the series in \eqref{eq:our-functions} are continuous. As a result, $f$ is a continuous function. We denote by $\TT_v$ the class of all functions $f$ of the form \eqref{eq:our-functions}, and by $\TT_c$ the subclass of those $f$ in $\TT_v$ for which $\omega_n$ is constant for each $n$. The class $\TT_v$ was investigated in detail by Abbott, Anderson and Pitt \cite{Abbott}, who denoted it by $\Lambda_{d,1}^*$ because of its relationship with Zygmund's class $\Lambda^*$ of quasi-smooth functions. But whereas \cite{Abbott} studies the class $\TT_v$ from the perspective of abscissa or $x$-values, our focus here is on ordinate or $y$-values.

Several members of $\TT_v$ have featured in the literature. These include the {\em alternating Takagi function} (e.g. \cite{Abbott,Kawamura}), for which $\omega_n=(-1)^n$ and which hence lies in $\TT_c$; and the {\em Gray Takagi function} \cite{Kobayashi}, for which $\omega_n(x)=r_n(x)$, where $r_n$ denotes the $n$th Rademacher function defined by
\begin{equation}
r_n(x):={(-1)}^{\lfloor 2^n x\rfloor}, \qquad n=0,1,2,\dots
\label{eq:Rademacher}
\end{equation}
The Gray Takagi function is shown in Figure \ref{fig:Takagi-and-Gray-Takagi}(b).
Another example is the function $T^3$ of Kawamura \cite{Kawamura}, which has $\omega_n(x)=r_1(x)\cdots r_n(x)$. All members of $\TT_v$ are nowhere differentiable; Billingsley's proof \cite{Billingsley} for the Takagi function extends easily. Furthermore, their graphs have Hausdorff dimension one \cite{Anderson}. All functions in the subclass $\TT_c$ are symmetric with respect to $x=1/2$, and their level set structure is similar to that of $T$: Lebesgue-almost all level sets are finite, but the ``typical" level set is uncountably infinite; see \cite{Allaart4}. Whether these properties hold for the wider class $\TT_v$ remains unsolved.

This paper concerns the Hausdorff dimension of the level sets
\begin{equation*}
L_f(y):=\{x\in[0,1]: f(x)=y\}, \qquad f\in\TT_v, \quad y\in\RR.
\end{equation*}
For a Borel set $E\subset\RR$, we denote the Hausdorff dimension of $E$ by $\dim_H E$.

\begin{figure}
\begin{center}
(a)\epsfig{file=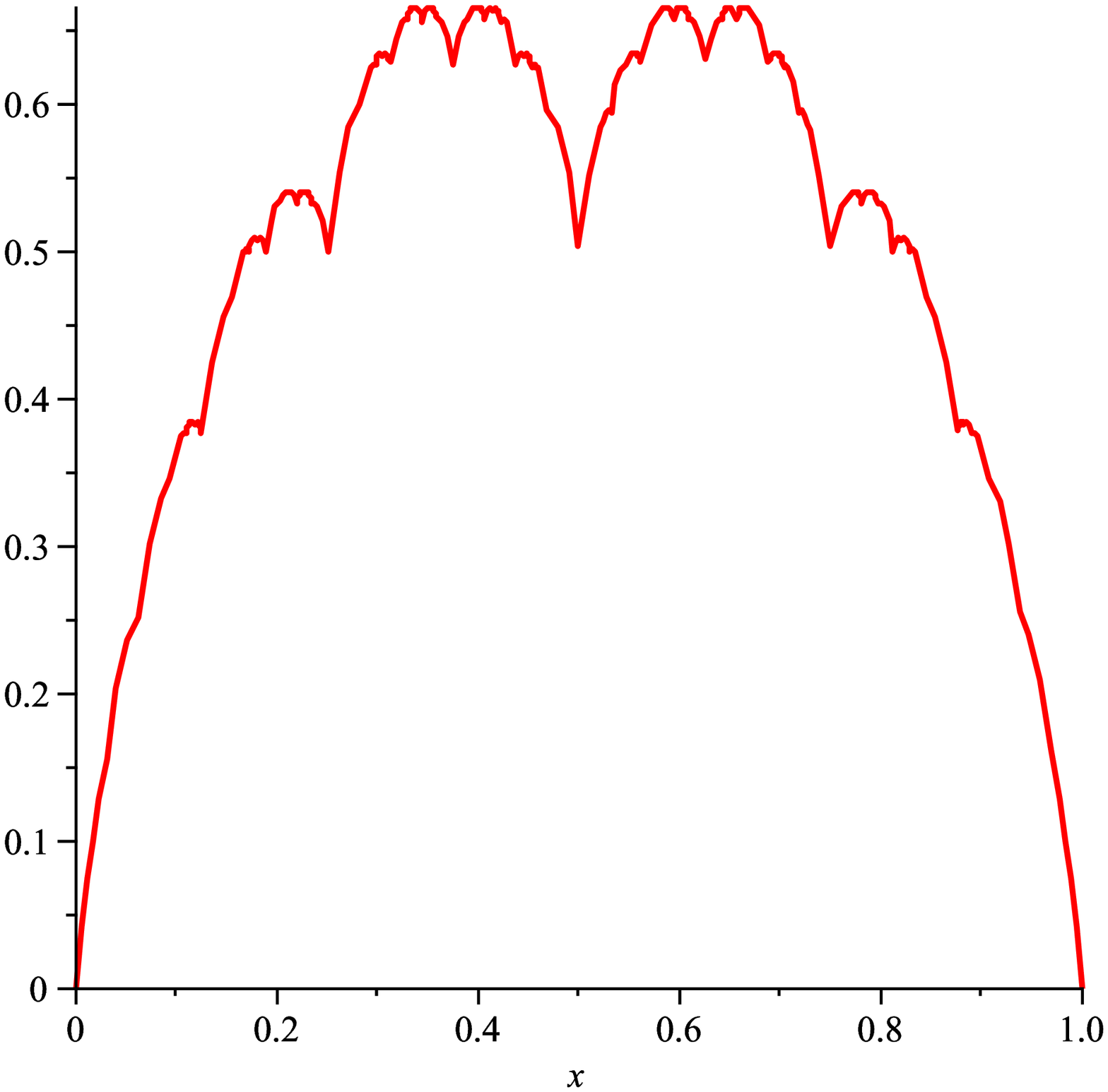, height=.18\textheight, width=.36\textwidth} \qquad
(b)\epsfig{file=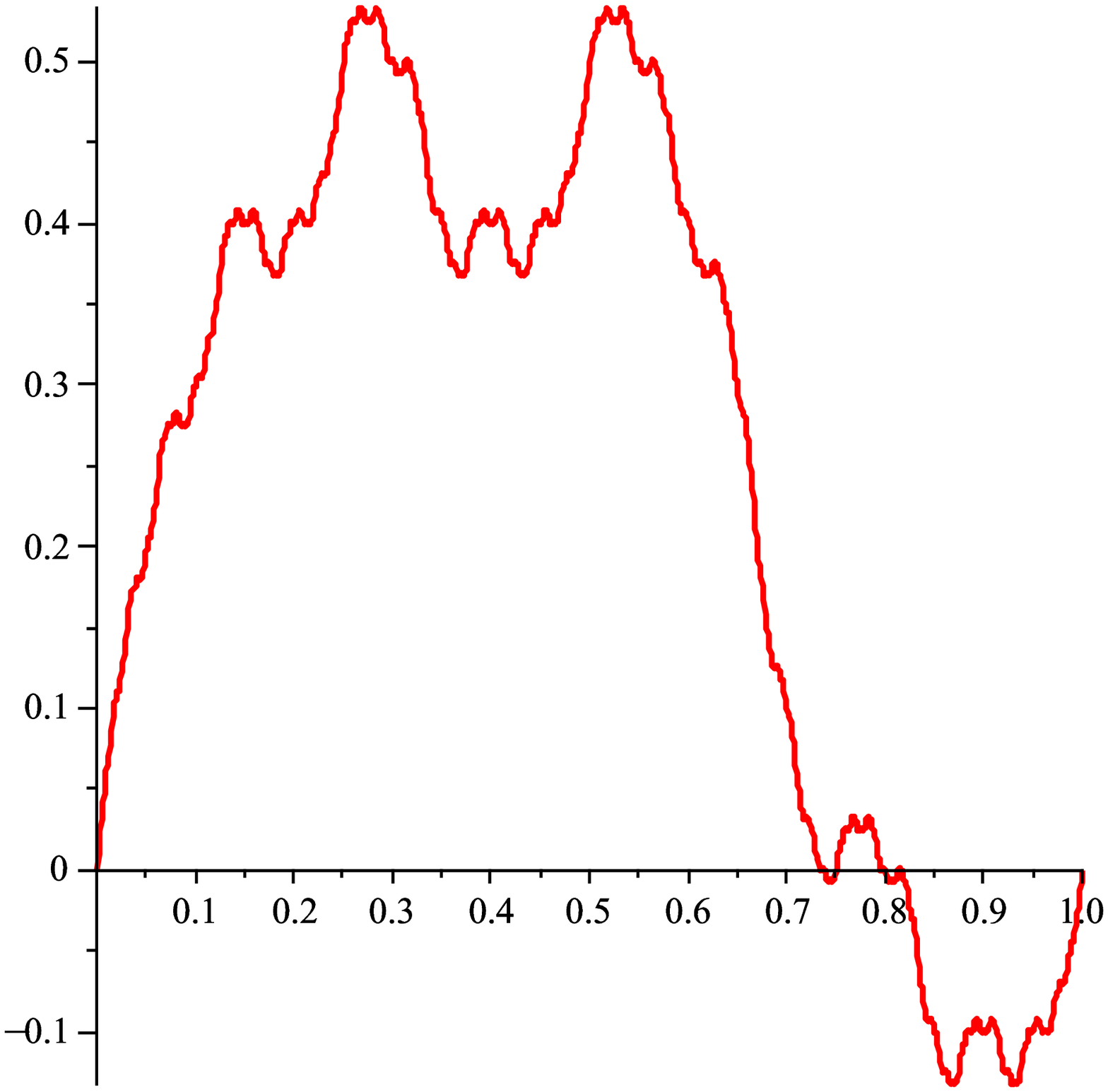, height=.18\textheight, width=.36\textwidth}
\caption{The Takagi function (left) and the Gray Takagi function}
\label{fig:Takagi-and-Gray-Takagi}
\end{center}
\end{figure}

\subsection{Sharp upper bounds} \label{subsec:sharp-bounds}

The first half of the paper gives sharp upper bounds for $\dim_H L_f(y)$, first for $f\in\TT_c$, and then for $f\in\TT_v$. 

\begin{theorem} \label{thm:rigid-upper-bound}
Let $f\in\TT_c$, with representation $f(x)=\sum_{n=0}^\infty 2^{-n}\omega_n\phi(2^n x)$. Then
\begin{equation}
\max_y \dim_H L_f(y)=1/2.
\label{eq:rigid-upper-bound}
\end{equation}
The maximum in \eqref{eq:rigid-upper-bound} is attained at a set of $y$-values dense in the range of $f$, and in particular, at 
\begin{equation}
y=\sum_{n=0}^\infty \frac{\omega_{2n}+\omega_{2n+1}}{4^{n+1}}.
\label{eq:rigid-extremal-level-set}
\end{equation}
\end{theorem}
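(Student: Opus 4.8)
The plan is to prove the two bounds separately, both organized around the \emph{slope} of the piecewise-linear partial sums. Write $f=f_n+r_n$, where $f_n(x)=\sum_{k=0}^{n-1}2^{-k}\omega_k\phi(2^kx)$ is affine on each dyadic interval $I=[j2^{-n},(j+1)2^{-n})$ and $r_n(x)=2^{-n}g_n(\{2^nx\})$ with $g_n=\sum_{k=0}^\infty 2^{-k}\omega_{n+k}\phi(2^k\cdot)\in\TT_c$. If $\epsilon_1\cdots\epsilon_n$ is the binary address of $I$, the slope of $f_n$ on $I$ is $\sigma_n=\sum_{k=1}^n\omega_{k-1}(-1)^{\epsilon_k}$, and passing to a child appends $\epsilon_{n+1}$ and changes the slope by $\omega_n(-1)^{\epsilon_{n+1}}=\pm1$. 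The decisive structural point, and the reason the answer for $\TT_c$ is smaller than for $\TT_v$, is that for \emph{constant} $\omega_n$ the two children of $I$ always carry slopes $\sigma_n+1$ and $\sigma_n-1$; thus the slopes evolve down the dyadic tree as a simple $\pm1$ walk, independently of the signs. I also record that $f$ agrees with its linear interpolation at dyadic points (since $r_n$ vanishes there), that $\|r_n\|_\infty\le 2^{-n}$, and hence that the oscillation of $f$ over any generation-$n$ interval is $(|\sigma_n|+O(1))2^{-n}$.

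For the lower bound I would exhibit an explicit Cantor subset of the level set at the value \eqref{eq:rigid-extremal-level-set}. Let $E$ consist of all $x$ whose binary digits satisfy, for every $n\ge0$, the balance condition $\omega_{2n}(-1)^{\epsilon_{2n+1}}+\omega_{2n+1}(-1)^{\epsilon_{2n+2}}=0$; this leaves exactly two admissible choices for each digit pair, so $E$ is a homogeneous Moran set with two children of ratio $1/4$ at each step, whence $\dim_H E=\log2/\log4=1/2$ (the lower estimate via the uniform mass distribution, the upper via the natural cover). The balance condition forces $\sigma_{2n}\equiv0$ along $E$, so on each generation-$2n$ interval meeting $E$ the affine part $f_{2n}$ is \emph{constant}; feeding this into the two-step self-affinity $f(x)=f_2(x)+\tfrac14 g_2(\{4x\})$ and iterating telescopes $f$ on $E$ to the constant $\sum_{n=0}^\infty (\omega_{2n}+\omega_{2n+1})4^{-(n+1)}$. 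A short computation (checking the two admissible pairs in each of the cases $\omega_{2n}=\omega_{2n+1}$ and $\omega_{2n}=-\omega_{2n+1}$) confirms that the per-pair contribution is exactly $(\omega_{2n}+\omega_{2n+1})/4^{n+1}$, matching \eqref{eq:rigid-extremal-level-set}. Thus $E\subseteq L_f(y)$ and $\dim_H L_f(y)\ge1/2$.

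For the upper bound I would pass to a box count. Every point of $L_f(y)$ lies in some generation-$n$ interval $I$ with $y\in f(\overline I)$; writing $\mathcal I_n(y)$ for the family of such intervals gives a cover of $L_f(y)$ by $|\mathcal I_n(y)|$ sets of diameter $2^{-n}$, whence $\dim_H L_f(y)\le\liminf_n \log|\mathcal I_n(y)|/(n\log2)$. It therefore suffices to show that $|\mathcal I_n(y)|$ grows no faster than $2^{n/2}$, uniformly in $y$. Here I would track, for each slope value $s$, the number of surviving intervals of generation $n$ and slope $s$, and bound its evolution by a fixed nonnegative transfer operator on the slope variable: a surviving interval always passes to children of slopes $s\pm1$, but the tail bound $\|r_n\|_\infty\le2^{-n}$ forces both children to survive only when $y$ lies within $O(2^{-n})$ of the interpolated midpoint value — a constraint whose cumulative effect, for a simple $\pm1$ walk, is to cap the per-step multiplicative growth at $\sqrt2$. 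The main obstacle is to make this branching bookkeeping precise and to verify that the resulting growth rate is exactly $\sqrt2$ — equivalently, that the relevant operator norm (the joint spectral radius degenerating, for the simple walk of $\TT_c$, to a single matrix) equals $2^{1/2}$; this is where the restriction to constant $\omega_n$ is used, since variable signs would permit both children to increase the slope and inflate the rate. Since the argument invokes only the simple-walk slope dynamics and the tail bound, both shared by every $f\in\TT_c$, the bound $1/2$ is uniform over the class and, with the lower bound, yields \eqref{eq:rigid-upper-bound}.

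Finally, for the density statement I would relocate the construction by a balanced prefix. Given a target value $v$ in the interior of the range and a point $x_0$ with $f(x_0)=v$, truncate $x_0$ to generation $2N$ and append a short correcting block so that the resulting dyadic point $p$ is \emph{balanced}, i.e.\ has slope $0$ at its terminal even generation; since the slope moves by $\pm1$ per step this costs $O(|\sigma_{2N}|)$ digits and keeps $p$ within $O(2^{-2N})$ of $x_0$, so $f(p)\to v$ by continuity. Attaching to $p$ the pair-balanced Cantor construction for the shifted function $g\in\TT_c$ produces a dimension-$1/2$ subset of $L_f(y_p)$ at the level $y_p=f(p)+O(2^{-2N})$. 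As $N\to\infty$ these levels $y_p$ accumulate at $v$, showing that $\dim_H L_f(y)=1/2$ on a set of $y$ dense in the range of $f$.
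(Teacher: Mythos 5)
Your lower bound is correct and is in substance the paper's own construction: your pair-balanced digit set $E$ is exactly the Moran set $K$ that the paper builds from the equalities \eqref{eq:case1-part1} and \eqref{eq:case2-part1}, and your telescoping computation of the level matches \eqref{eq:rigid-extremal-level-set}. Your digit-steering density argument is likewise a workable variant of the paper's (which instead uses recurrence of the slope random walk on $([0,1),\mathrm{Leb})$ to find slope-zero intervals densely, then the oscillation bound $\diam f(I_{n,j})\le 2^{1-n}$). The genuine gap is in the upper bound --- the hard half of the theorem --- and it is precisely the step you yourself flag as ``the main obstacle'': you never exhibit the transfer operator, and a bookkeeping indexed by the slope variable alone cannot deliver the rate $\sqrt{2}$. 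Whether a surviving interval contributes one or two surviving children is governed not by its slope but by the arithmetic position of its value relative to $y$: the paper must separate, within each height window of size $\sim 4^{-n}$, the intervals on which $f_{2n}$ is \emph{identically} equal to the quantized level $2k/4^n$ (the count $N_{n,k}^0$ --- the only intervals that can double) from the slanted crossings (the count $N_{n,k}^1$), and this dichotomy rests on the parity fact of Lemma \ref{lem:even}, that $4^n f_{2n}(j/2^{2n})$ is always even, so that segments meeting a strip's interior are automatically nonhorizontal. Your sketch has no analogue of this quantization mechanism.

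Moreover, even granting the flat/slanted dichotomy, the naive recursion one would write down,
\begin{equation*}
M_{n+1}^0\le 2M_n^0+M_n^1, \qquad M_{n+1}^1\le 2M_n^0+M_n^1,
\end{equation*}
has growth matrix $\bigl[\begin{smallmatrix}2&1\\2&1\end{smallmatrix}\bigr]$ with spectral radius $3$ per two dyadic generations, which only yields $\dim_H L_f(y)\le \log 3/\log 4\approx 0.79$ --- worse than $1/2$ and indeed worse than the paper's $\TT_v$ bound. The point of the paper's proof is the sharper estimate \eqref{eq:claim1}, $M_{n+1}^0\le\max\{2M_n^0,M_n^1\}$, with a \emph{maximum} rather than a sum; this is obtained by the case analysis on $(\omega_{2n},\omega_{2n+1})$ showing that the doubled flat intervals and the flat intervals newly spawned by slope-$\pm2$ segments land at \emph{different} levels $k'$ (see \eqref{eq:case1-part1}--\eqref{eq:case1-part3}), after which one still needs \eqref{eq:M-upper-estimate} to pass from partial sums to $f$ itself via the tail bound. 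Your assertion that ``the joint spectral radius degenerates to a single matrix'' of norm $\sqrt{2}$ is thus not merely unverified: the natural single matrix on your proposed state space is the wrong one, and the correct rate emerges only from the level-separation argument that your proposal does not contain. As written, the proposal proves the lower bound and density but leaves the inequality $\max_y\dim_H L_f(y)\le 1/2$ unestablished.
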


The Gray Takagi function satisfies \eqref{eq:rigid-upper-bound} even though it does not belong to $\TT_c$.

\begin{theorem} \label{thm:Gray-upper-bound}
Let $f$ be the Gray Takagi function,
\begin{equation}
f(x)=\sum_{n=0}^\infty\frac{r_n(x)}{2^n}\phi(2^n x),
\label{eq:Gray-Takagi}
\end{equation}
where $r_n$ is the $n$th Rademacher function. Then $\max_y \dim_H L_f(y)=1/2$.
\end{theorem}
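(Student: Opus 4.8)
The plan is to exploit the two–state self–affine structure that the Rademacher signs force on $f$. Writing $x=0.\eps_1\eps_2\cdots$ in base $2$ one has $r_n(x)=(-1)^{\eps_n}$, and a direct computation shows that on a dyadic interval $I=[\,j2^{-n},(j+1)2^{-n}\,]$ the tail $\sum_{k\ge n}2^{-k}r_k(x)\phi(2^k x)$ equals $2^{-n}F_{\eps_n}(2^n x-j)$, where
\[
F_0:=f,\qquad F_1:=f-2\phi .
\]
With $h:=f-\phi$ this reads $F_s=(-1)^s\phi+h$, and $F_0,F_1$ satisfy the closed system
\[
F_0(u)=\begin{cases} u+\tfrac12 F_0(2u), & u\in[0,\tfrac12],\\ (1-u)+\tfrac12 F_1(2u-1), & u\in[\tfrac12,1],\end{cases}
\qquad
F_1(u)=\begin{cases} -u+\tfrac12 F_0(2u), & u\in[0,\tfrac12],\\ (u-1)+\tfrac12 F_1(2u-1), & u\in[\tfrac12,1].\end{cases}
\]
The crucial point is that the left child of any dyadic interval is always in state $0$ and the right child always in state $1$, independently of the history. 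Thus, although $f\notin\TT_c$, its level–set combinatorics are governed by a finite, two–state graph–directed system, which puts $f$ on the same footing as the constant–sign functions of Theorem \ref{thm:rigid-upper-bound} rather than forcing the adversarial worst–case behind the general $\TT_v$ bound.

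Next I would set up the level–set count. For fixed $y$, cover $L_f(y)$ by the level–$n$ dyadic intervals that meet it and let $N_n(y)$ be their number; since these intervals have length $2^{-n}$, $\dim_H L_f(y)\le \limsup_n \log N_n(y)/(n\log 2)$. On $I$ the equation $f=y$ becomes $F_s(u)+\sigma u=\tau$, where $s\in\{0,1\}$ is the state, the integer $\sigma$ is the (rescaled) slope of the $n$th partial sum, and $\tau$ is a renormalised target. Passing to the two children multiplies $\tau$ by $2$ up to an explicit additive constant, while the slope increases by $1$ when the chosen child lies on the side matching the current state and decreases by $1$ otherwise — precisely the Gray–code transform of the usual Takagi slope walk. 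An interval meets $L_f(y)$ iff $\tau$ lies in the range of $g_{s,\sigma}(u):=F_s(u)+\sigma u$ on $[0,1]$, and it has two active children iff $\tau$ lies in the ranges of $g_{s,\sigma}$ over both halves. Encoding ``number of active children'' as products of nonnegative matrices indexed by the state–slope pair, the growth rate of $N_n(y)$ is bounded by the joint spectral radius of the resulting matrix family.

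The heart of the matter is to show that this growth rate equals $\sqrt2$ uniformly in $y$. Because each branch function satisfies $F_s(u)=\pm u+\tfrac12F_{s'}(2u)$, the range of $g_{s,\sigma}$ over a half–interval is a $\tfrac12$–scaled, linearly shifted copy of the range of the appropriate child function, and how the two shifted half–ranges overlap — hence how often both children are active — is controlled by the current slope $\sigma$: branching is suppressed while $|\sigma|$ is large and occurs only near returns of the walk to a bounded neighbourhood of $0$. The balanced Gray increment rule makes these returns no more frequent than for the symmetric Takagi function, so one expects $N_n(y)\le C_\eps 2^{(1/2+\eps)n}$ for every $\eps>0$, giving $\dim_H L_f(y)\le 1/2$. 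This is the step I expect to be the main obstacle: the slope state space is infinite and activeness depends delicately on $\tau$, so pinning the joint spectral radius at $\sqrt2$ requires either a careful truncation showing that large–slope states are transient for branching, or an explicit sub–invariant weight on the state space. It is exactly here that the fixed Gray sign pattern, rather than the free sign choices underlying the $\approx.8166$ bound for $\TT_v$, is used.

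Finally I would prove equality by exhibiting one level $y^*$ with $\dim_H L_f(y^*)=1/2$ (an explicit value analogous to \eqref{eq:rigid-extremal-level-set}, chosen as in Theorem \ref{thm:rigid-upper-bound} so that the slope walk returns to $0$ as often as possible). Along the subtree of paths on which both children are active at a positive proportion of levels, one retains $\asymp 2^{n/2}$ surviving intervals of length $2^{-n}$; the self–similar mass distribution on the resulting Cantor set $K\subset L_f(y^*)$ then satisfies a Frostman bound $\mu(B(x,r))\le C\,r^{1/2-\eps}$, whence $\dim_H L_f(y^*)\ge 1/2$. Combined with the upper bound this yields $\max_y\dim_H L_f(y)=1/2$, matching Theorem \ref{thm:rigid-upper-bound} even though $f\notin\TT_c$.
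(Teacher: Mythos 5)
Your two--state reduction is correct: with $F_0:=f$ and $F_1:=f-2\phi$ one does get the closed system you write, the left/right child is always in state $0$/$1$, and in fact your $F_1$ is equivalent to the reflection functional equation \eqref{eq:Gray-Takagi-FE} that the paper uses elsewhere, via the identity $F_1(u)=-f(1-u)$, i.e.\ $f(u)+f(1-u)=2\phi(u)$. But the proposal stops short of a proof at exactly the two load-bearing steps, and you say so yourself for the first one. For the upper bound, ``one expects $N_n(y)\leq C_\eps 2^{(1/2+\eps)n}$'' is not an argument: the generic machinery you invoke (nonnegative transition matrices over an infinite slope--state space, joint spectral radius) only yields the $\TT_v$ bound $d_v^*\approx .8166$ of Theorem \ref{thm:flexible-upper-bound} unless one extracts a specific cancellation from the Gray sign pattern, and you never identify what that cancellation is. The paper's proof does: from $\omega_{n,j}=(-1)^j$ one derives $s_{n,0}=n$, $|s_{n,j+1}-s_{n,j}|=2$, hence $s_{n,j}\equiv n+2j\ (\mod 4)$, and from the resulting substitution tables the parity facts that (a) the two horizontal offspring of a horizontal segment of $f_{2n}$ lie at \emph{different} heights in $f_{2n+2}$, (b) slope-$2$ and slope-$(-2)$ segments in a common strip spawn horizontals at \emph{different} heights, and (c) exactly half of the $N_{n,k}^1$ segments in a strip have positive slope (using $f_{2n}(0)=f_{2n}(1)=0$). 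This is what replaces $M_{n+1}^0\leq 2M_n^0+M_n^1$ by $M_{n+1}^0\leq M_n^0+\frac12 M_n^1$, giving $M_n^0\leq 2^{n-1}$, $M_n^1\leq 2^n$ at scale $4^{-n}$ and hence dimension $\leq 1/2$. Your ``sub-invariant weight or truncation'' would have to encode precisely these height-separation facts; without them the step fails, since for free signs the true growth rate is $\sqrt{\alpha}>2$ per quaternary level.

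The lower bound has a parallel gap, and here your heuristic actually points the wrong way: you propose to choose $y^*$ ``as in Theorem \ref{thm:rigid-upper-bound} so that the slope walk returns to $0$ as often as possible,'' but for the Gray Takagi function the signs are fixed, not chosen, and a single horizontal segment can \emph{never} double at a fixed level --- by the height-separation fact (a) above, its two horizontal offspring sit at different heights, so the $\TT_c$-style construction behind \eqref{eq:rigid-extremal-level-set} does not transfer. The paper's construction instead tracks the two-segment figure $X$ with slope pattern $(2,0)$: under the alternating substitutions $(2,0)\mapsto(4,2,0,2,0,-2,0,2)$ and $(0,2)\mapsto(2,0,-2,0,2,0,2,4)$, each copy of $X$ spawns two copies of $X$ at a \emph{common} height, rotated by $180^\circ$ at each stage, with baselines $y_n=\frac12\sum_{i=0}^{n-1}(-1/4)^i\to 2/5$; this yields a Cantor set of ratio $1/4$ and two pieces inside $L_f(2/5)$, hence dimension $1/2$. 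Your Frostman sketch asserts $\asymp 2^{n/2}$ surviving intervals at a single level but supplies no mechanism producing them; identifying the correct doubling unit ($X$, not a horizontal segment) and the correct level ($y=2/5$, with oscillating baselines) is the actual content of this half of the theorem. In short: the skeleton is sound and genuinely different in flavor from the paper's, but as written both the upper-bound rate $\sqrt{2}$ and the extremal level are conjectured rather than proved, and the missing ingredients are exactly the parity/height-separation combinatorics of the Gray slope pattern.
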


For general $f\in\TT_v$, however, the Hausdorff dimension of the level sets of $f$ can be much greater:

\begin{theorem} \label{thm:flexible-upper-bound}
Let $\alpha:=(9+\sqrt{105})/2$, and put $d_v^*:=\log\alpha/\log 16$. We have
\begin{equation}
\max_{f\in\TT_v} \max_{y\in\RR} \dim_H L_f(y)=d_v^*\approx .8166.
\label{eq:flexible-upper-bound}
\end{equation}
\end{theorem}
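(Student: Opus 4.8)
The plan is to reduce the computation of $\dim_H L_f(y)$ to a symbolic counting problem governed by a family of nonnegative transfer matrices, and then to recognize $d_v^*$ as coming from their joint spectral radius. First I would exploit self-similarity: on a dyadic interval $I=[j2^{-k},(j+1)2^{-k}]$, writing $x=j2^{-k}+2^{-k}u$, one checks that
$$f(x)=f_k(j2^{-k})+2^{-k}\big(s_I\,u+g_I(u)\big),\qquad u\in[0,1],$$
where $f_k$ is the $k$-th partial sum (linear on $I$ with integer slope $s_I$) and $g_I\in\TT_v$ is again a generalized Takagi function determined by the signs $\omega_n$, $n\ge k$, on $I$. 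Hence $I$ meets $L_f(y)$ iff the normalized target $c_I:=2^k(y-f_k(j2^{-k}))$ lies in the range of $u\mapsto s_I u+g_I(u)$. Passing from $I$ to its two children multiplies the scale by $2$ and sends the state $(s,c)=(s_I,c_I)$ to $(s+\epsilon,2c)$ (left child) and $(s-\epsilon,2c-(s+\epsilon))$ (right child), where $\epsilon=\omega_k|_I\in\{-1,1\}$ is a sign we are free to choose. It is exactly this per-interval choice of $\epsilon$, unavailable in $\TT_c$, that allows the level sets to be larger.

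Next I would set up the counting. Let $N_k$ be the number of level-$k$ dyadic intervals meeting $L_f(y)$; since these cover $L_f(y)$, one has $\dim_H L_f(y)\le\limsup_k(\log N_k)/(k\log 2)$. The growth of $N_k$ is controlled by how many children of an active interval are again active, which depends only on the state $(s,c)$ and on $\epsilon$. I would show that along an optimal configuration the reachable states form a finite set (the slopes stay bounded, and the admissible normalized heights $c$ cycle through finitely many rationals), so that the branching is encoded by a finite family of nonnegative matrices $\{A_\epsilon\}$ acting on the vector of state-occupation counts. Maximizing $N_k$ over all $f\in\TT_v$ and all $y$ then amounts to maximizing the norm of the products $A_{\epsilon_1}\cdots A_{\epsilon_k}$, i.e. to computing the joint spectral radius $\rho$ of the family, yielding $\max_{f,y}\limsup_k(\log N_k)/(k\log 2)=(\log\rho)/\log 2$.

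For the lower bound I would exhibit an explicit periodic sign pattern, of period $4$, together with a specific level $y$, for which the admissible states close up into a small cycle and the corresponding per-block (four-level) transfer matrix $B$ is an explicit nonnegative integer matrix with Perron root equal to $\alpha=(9+\sqrt{105})/2$; note that $\alpha$ is precisely the positive root of $t^{2}-9t-6$. This gives $N_{4m}\asymp\alpha^{m}$, and by building a self-similar Cantor subset of $L_f(y)$ supported on these active intervals and applying the mass distribution principle, I would conclude $\dim_H L_f(y)\ge(\log\alpha)/\log 16=d_v^*$.

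The hard part is the matching upper bound: showing the per-level joint spectral radius cannot exceed $\alpha^{1/4}$, equivalently that no sign sequence and no level $y$ beats the period-$4$ pattern. This is the genuine obstacle, since joint spectral radii are generally intractable, so the argument must use the specific structure of $\{A_\epsilon\}$ — either by producing an extremal (Barabanov/Lyapunov) norm on the finite state space under which every $A_\epsilon$ contracts by the factor $\alpha^{1/4}$, or by establishing the finiteness property directly, verifying that the spectral maximum over all products is attained by the periodic product $B$. Constructing such an invariant norm or cone, and ruling out the finitely many slope excursions that might in principle escape it, is where the main effort lies; by contrast the reduction steps and the lower-bound construction are comparatively routine.
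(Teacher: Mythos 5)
Your overall architecture --- transfer-matrix counting plus a joint spectral radius, with a period-four sign pattern and a level $y$ realizing a block matrix of Perron root $\alpha$ (the positive root of $t^2-9t-6$) for the lower bound --- is exactly the skeleton of the paper's proof, and your lower-bound sketch is essentially the paper's construction of the extremal function (there the two-stage construction matrix is $\hat M=\begin{bmatrix}6&4\\6&3\end{bmatrix}$, the limit set is a two-type Moran set with common ratio $1/16$ and $y=8/17$, and Marion's theorem replaces your mass distribution argument). But two steps of your upper bound are genuinely missing. First, your reduction to a finite family $\{A_\epsilon\}$ rests on the assertion that the reachable states $(s,c)$ form a finite set, and this is false as stated: $y$ is an arbitrary real, so the normalized heights $c_I=2^k\big(y-f_k(j2^{-k})\big)$ generically do not cycle through finitely many rationals, and the slopes $s_I$ of active intervals are unbounded (a steep segment can still cross the level, so you cannot discard those states a priori). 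The paper obtains a finite-dimensional recursion not from finiteness of states but by coarse-graining: it fixes the two adjacent width-$2\cdot4^{-n}$ strips around $y$, counts $c_n$ horizontal segments at the center height and $l_n,u_n$ nonhorizontal segments (of \emph{any} slope) in the lower and upper strip, and proves for ${\bf x}_n=(c_n,\,l_n+u_n,\,\max\{l_n,u_n\})^t$ that ${\bf x}_{n+1}\leq E{\bf x}_n$ or ${\bf x}_{n+1}\leq F{\bf x}_n$ for two explicit $3\times3$ matrices (Lemma \ref{lem:E-and-F}). Some such device is needed before a joint spectral radius problem can even be posed.

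Second --- and you concede this yourself --- the heart of the theorem, that no product of transfer matrices beats the periodic one, is left as a named obstacle (``Barabanov norm or finiteness property'') rather than proved; since exact joint spectral radius computations are generically intractable, naming the strategy is not a proof, and the theorem stands or falls on this step. The paper's Proposition \ref{prop:joint-spectral-radius} carries it out by elementary substitution inequalities rather than an extremal norm: with $\|T\|={\bf 1}T{\bf 1}^t$ and the monotonicity facts \eqref{eq:left-multiply-by-1}--\eqref{eq:right-multiply-by-1}, one may replace $E^2$ by $EF$, then $F^3E$ by $(FE)^2$, and finally $F^2E(FE)^kF^2E$ by $(FE)^{k+3}$, each without decreasing the norm of the product --- the last replacement hinging on the coincidence $F^2E=3FE+D$ (with $D$ having single nonzero entry $d_{33}=2$) together with the characteristic equation $G^3=9G^2+6G$ of $G=FE$ --- so that every product is dominated by a power of $FE$ and $\rho(\{E,F\})=\sqrt{\rho(FE)}=\sqrt{\alpha}$. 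In effect this verifies your ``finiteness property'' directly, but that verification \emph{is} the proof, and your proposal does not contain it.
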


By embedding the graph of $f$ affinely into the graph of a suitable function $g\in \TT_v$, the above results can be extended to the intersection of the graph of $f$ with arbitrary lines of integer slope.

\begin{corollary} \label{cor:integer-slope}
For $m,b\in\RR$, let $\ell_{m,b}$ denote the line with equation $y=mx+b$.
\begin{enumerate}[(i)]
\item For each $f\in\TT_c$ and each $m\in\ZZ$, 
\begin{equation*}
\max_{b\in\RR} \dim_H (\graph(f)\cap \ell_{m,b})=1/2.
\end{equation*}
\item Let $d_v^*$ be as in Theorem \ref{thm:flexible-upper-bound}. For each $m\in\ZZ$,
\begin{equation*}
\max_{f\in\TT_v} \max_{b\in\RR} \dim_H (\graph(f)\cap \ell_{m,b})=d_v^*.
\end{equation*}
\end{enumerate}
\end{corollary}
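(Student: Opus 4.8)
The plan is to reduce every intersection with an integer-slope line to an ordinary level set by an area-preserving shear, and then to recognize the resulting ``tilted'' level set as a genuine level set of another function in the same class, restricted to a dyadic subinterval. Concretely, the shear $\Phi(x,y)=(x,y-mx)$ is a bi-Lipschitz affine bijection of the plane, so it preserves Hausdorff dimension, carries $\ell_{m,b}$ to the horizontal line $y=b$, and sends $\graph(f)$ to $\graph(f-m\,\mathrm{id})$. Hence
$$\dim_H\big(\graph(f)\cap\ell_{m,b}\big)=\dim_H\{x\in[0,1]:f(x)-mx=b\},$$
and both parts reduce to bounding the maximal dimension of the level sets of $F:=f-m\,\mathrm{id}$. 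The subtlety is that $F\notin\TT_v$ (every member of $\TT_v$ vanishes at $0$ and at $1$, whereas $F(1)=-m$), so $F$ is not directly covered by Theorems \ref{thm:rigid-upper-bound}--\ref{thm:flexible-upper-bound}; the sets $\{F=b\}$ must instead be exhibited as affine copies of honest $\TT_v$-level sets on dyadic intervals.

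For the upper bounds I would use the self-affine decomposition of a $\TT_v$-function: on a level-$N$ dyadic interval the first $N$ terms of the defining series are affine, and the tail is a $2^{-N}$-rescaled copy of another $\TT_v$-function. Fix $N\ge|m|$ with $N\equiv m\pmod 2$, prescribe the first $N$ signs $\omega_0,\dots,\omega_{N-1}$ to be constants summing to $-m$ (possible by the parity condition), and let the remaining signs copy those of $f$; the resulting $g$ lies in $\TT_v$, and in $\TT_c$ if $f$ does. Then on $[0,2^{-N}]$,
$$g(x)=-mx+2^{-N}f(2^Nx)=2^{-N}\big[f(2^Nx)-m\,2^Nx\big],$$
so $L_g(2^{-N}b)\cap[0,2^{-N}]=2^{-N}\{x:f(x)-mx=b\}$ for every $b$. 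Since dyadic scaling preserves dimension and this set is contained in $L_g(2^{-N}b)$, Theorem \ref{thm:flexible-upper-bound} gives $\dim_H\{x:f(x)-mx=b\}\le d_v^*$ in general, and Theorem \ref{thm:rigid-upper-bound} gives the bound $1/2$ when $f\in\TT_c$.

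For the lower bounds I would run the same embedding in reverse. In part (ii) I would take an extremal pair $f_0\in\TT_v$, $y_0$ with $\dim_H L_{f_0}(y_0)=d_v^*$ supplied by Theorem \ref{thm:flexible-upper-bound}, build $g\in\TT_v$ whose first $N$ signs sum to $+m$ and whose tail copies $f_0$, and verify that $g(x)-mx=2^{-N}f_0(2^Nx)$ on $[0,2^{-N}]$; then $\graph(g)\cap\ell_{m,2^{-N}y_0}$ contains an affine copy of $L_{f_0}(y_0)$ and so has dimension at least $d_v^*$. In part (i) the function $f\in\TT_c$ is given, so instead of choosing the tail I would pass to the deepest scale. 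The slope of $f$ on the level-$N$ interval $I_j$ is $\sum_{n<N}\omega_n\eps_{n,j}$, where $\eps_{n,j}=\pm1$ records whether $\phi(2^n\cdot)$ increases or decreases on $I_j$; as $j$ ranges over $0,\dots,2^N-1$ the patterns $(\eps_{0,j},\dots,\eps_{N-1,j})$ exhaust $\{-1,1\}^N$, so for $N\ge|m|$ of the correct parity some $I_j$ carries slope exactly $m$. On that $I_j$ the linear part of $F$ cancels and $\{F=b\}\cap I_j$ is an affine copy of an ordinary level set of the tail $\tilde f_N\in\TT_c$ (which, the signs being constant, is independent of $j$); choosing $b$ so that this is the dimension-$1/2$ level set guaranteed by Theorem \ref{thm:rigid-upper-bound} yields the value $1/2$.

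The crux, and the only nonroutine step, is the upper-bound embedding: realizing an \emph{arbitrary} tilted level set $\{f-mx=b\}$ as part of a genuine level set of a single function in $\TT_v$ (or $\TT_c$). The key device is to annihilate the unwanted integer slope $m$ by the accumulated slope $-m$ of a prefix of sign choices, which is available precisely because $m$ is an integer of the right parity and because the prefix slope of a $\TT_v$-function on $[0,2^{-N}]$ can be made any integer in $\{-N,-N+2,\dots,N\}$. Once this cancellation is arranged, the remaining ingredients---invariance of Hausdorff dimension under the shear and under dyadic scaling, countable stability across the $2^N$ dyadic pieces, and the realizability of the slope value $m$ at depth $N$---are routine.
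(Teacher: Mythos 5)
Your proposal is correct and follows essentially the same route as the paper: the paper's proof also cancels the slope $m$ by prepending a run of constant signs (it takes $N=|m|$ with all prefix signs equal, so that $g(x)=\mp mx+2^{-m}f(2^m x)$ on $[0,2^{-m}]$), conjugates by the bi-Lipschitz affine map sending $\ell_{m,b}$ to a horizontal line, invokes Theorems \ref{thm:rigid-upper-bound} and \ref{thm:flexible-upper-bound} for the bounds and extremal levels, and for the lower bound in part (i) locates a slope-$m$ interval $I_{m,j}$ via Lemma \ref{lem:all-integer-slopes} and uses that the tail there is again a $\TT_c$-function. Your factorization of the affine map into a shear plus dyadic scaling, the more general prefix length $N\geq|m|$ with $N\equiv m\ (\mathrm{mod}\ 2)$, and the direct count of sign patterns in place of Lemma \ref{lem:all-integer-slopes} are only cosmetic variants of the paper's argument.
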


We prove Theorems \ref{thm:rigid-upper-bound}, \ref{thm:Gray-upper-bound} and \ref{thm:flexible-upper-bound} and Corollary \ref{cor:integer-slope} in Section \ref{sec:upper-bounds}, by modifying and extending the method of De Amo et al. \cite{ABDF}. The idea is to consider the intersection of the graph of the partial sum $f_{2n}$ of $f$ (defined in \eqref{eq:partial-sum-functions} below) with a suitably chosen horizontal strip $[0,1]\times J_n$, where $\{J_n\}$ is a nested sequence of intervals shrinking to $\{y\}$. We then derive a system of linear recursions for the number of line segments in these strips (differentiated according to their slopes). An added complication is, that the coefficients in these recursions are dependent on $n$. Thus, for instance, the greater part of the proof of \eqref{eq:flexible-upper-bound} consists in determining the joint spectral radius of a certain pair of $3\times 3$ nonnegative matrices. It appears to be a lucky coincidence that this can be done exactly.

\subsection{The random case: zero sets and maximal sets}

Perhaps most interesting is the case when the signs $\omega_n$ are chosen at random; we consider two natural schemes here. In Model 2 below and in the rest of the paper, $\ZZ_+$ denotes the set of nonnegative integers. Let $(\Omega,\mathcal{F},\rP)$ be a probability space large enough to accomodate an infinite sequence of Bernoulli random variables with arbitrary success probabilities.

\bigskip
{\bf Model 1.} ({\em Random choice from $\TT_c$}) Let $\omega_0,\omega_1,\dots$ be independent and identically distributed (i.i.d.) random variables on $(\Omega,\mathcal{F},\rP)$ with $\rP(\omega_n=1)=p=1-\sP(\omega_n=-1)$, where $0<p<1$, and set $\omega_n(x):=\omega_n$.

\bigskip
{\bf Model 2.} ({\em Random choice from $\TT_v$}) Let $\{\omega_{n,j}: n\in\ZZ_+, j=0,1,\dots,2^n-1\}$ be i.i.d. random variables on $(\Omega,\mathcal{F},\rP)$ with $\rP(\omega_{n,j}=1)=p=1-\sP(\omega_{n,j}=-1)$, and set $\omega_n(x):=\omega_{n,j}$ if $x\in[j/2^n,(j+1)/2^n)$.

\bigskip
In either model, set $q:=1-p$. Since it seems difficult to treat the level sets in full generality, we focus on two special cases: the zero set and the set of maximum points of the random function $f$. Let
\begin{equation*}
d_0:=\frac{\log\big((1+\sqrt{5})/2\big)}{\log 4}\approx .3471.
\end{equation*}
For the zero set of $f$, we have the following results.

\begin{theorem} \label{thm:zero-set-rigid}
Assume Model 1. 
\begin{enumerate}[(i)]
\item The zero set $L_f(0)$ is finite with probability
\begin{equation*}
\rP(\#L_f(0)<\infty)=1-\min\{p/q,q/p\},
\end{equation*}
and given that $L_f(0)$ is infinite, $\dim_H L_f(0)>0$ a.s., for each $p\in(0,1)$.
\item If $p=1/2$, then $\dim_H L_f(0)\leq d_0$ a.s.
\end{enumerate}
\end{theorem}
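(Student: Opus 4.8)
The three statements share a common combinatorial skeleton, which I would set up first. Writing $c_j^{(N)}:=2^N f(j/2^N)$ for the scaled values of $f$ at the level-$N$ dyadic points, a short computation from $f(\text{midpoint})=\tfrac12\big(f(j/2^N)+f((j+1)/2^N)\big)+2^{-N-1}\omega_N$ shows that each $c_j^{(N)}$ is an integer and that
\[
c_{2j}^{(N+1)}=2c_j^{(N)}, \qquad c_{2j+1}^{(N+1)}=c_j^{(N)}+c_{j+1}^{(N)}+\omega_N .
\]
Since the tail of $f$ on $[j/2^N,(j+1)/2^N]$ equals $2^{-N}\sum_{m\ge0}2^{-m}\omega_{m+N}\phi(2^m\cdot)$ and $\big|\sum_m 2^{-m}\omega_{m+N}\phi(2^m u)\big|\le\sum_m 2^{-m}\phi(2^m u)=T(u)\le 2/3$, I would then prove the clean dichotomy that the closed interval $[j/2^N,(j+1)/2^N]$ meets $L_f(0)$ if and only if $c_j^{(N)}c_{j+1}^{(N)}\le 0$: two endpoint values of the same strict sign are each at least $1$ in absolute value, forcing $2^N|f|\ge 1-2/3>0$ on the interval, whereas opposite signs or a vanishing endpoint force a crossing. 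In Model~1 there is a single coin $\omega_n$ per level, so the whole picture is governed by the deterministic integer recursion above together with the sign walk $W_N:=\sum_{n=0}^{N-1}\omega_n$ (which is exactly the value $c_1^{(N)}$ adjacent to the endpoint $x=0$) and the slope walk $s\mapsto s\pm\omega_n$ of the individual pieces.

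For part (i) I would first show that strict sign changes are never created: a negative value can arise only as the midpoint insert between two equal values, so $(c_j^{(N)})_j$ never develops strictly adjacent opposite signs, and hence $L_f(0)$ consists precisely of the dyadic zeros (values $c_j^{(N)}=0$) together with their accumulation points. A new dyadic zero is inserted next to the endpoint exactly when $W_N$ steps to $0$, and between two newly adjacent zeros the function is an affine copy of a fresh $\TT_c$ function carrying an integer slope; tracking these slopes reduces the whole zero tree to the sign walk $W_N$. The key claim I would establish is
\[
\#L_f(0)=\infty \iff \{W_N\}_{N\ge1}\ \text{attains both the value}\ +1\ \text{and the value}\ -1 ,
\]
the point being that accumulation of zeros requires the adjacent values to oscillate in sign, while if $W_N$ stays on one side the function keeps that sign near the relevant region and only finitely many isolated zeros survive. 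Granting this, gambler's ruin finishes (i): conditioning on $\omega_0$ and using that a biased walk started at $+1$ reaches $-1$ with probability $(q/p)^2$ (for $p>1/2$) while one started at $-1$ reaches $+1$ with probability $1$, one gets $\rP(\#L_f(0)=\infty)=p(q/p)^2+q=q/p$, and symmetrically $p/q$ for $p<1/2$, i.e. $\rP(\#L_f(0)<\infty)=1-\min\{p/q,q/p\}$. Establishing the displayed equivalence is the crux of part (i): the difficulty is that the affine subcopies all share the one coin sequence and are therefore far from independent, so the argument must proceed through the unambiguous slope/value bookkeeping along each descending branch rather than through a product formula.

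On the event that $L_f(0)$ is infinite, $W_N$ crosses $0$, and by the strong Markov property its excursions across a fixed sign change form an i.i.d.\ regeneration structure. I would use this to embed inside $L_f(0)$ a random self-similar Cantor set: each sign-crossing of $W$ produces, at a geometrically controlled scale, at least a fixed number of disjoint affine subcopies of $f$ that are again infinite with a probability bounded below, so that $L_f(0)$ contains a subset built by an everywhere-branching construction with contraction ratios bounded away from $0$ and $1$. A standard mass-distribution (Frostman) estimate on the natural measure of this Cantor set then yields $\dim_H L_f(0)\ge c>0$ almost surely on the event of infiniteness, for every $p\in(0,1)$; the constant $c$ degrades but stays positive as $p\to 0,1$ because the construction is run under the conditioning.

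For part (ii) I would bound the covering number $N_N:=\#\{j:c_j^{(N)}c_{j+1}^{(N)}\le 0\}$, which covers $L_f(0)$ by $N_N$ intervals of length $2^{-N}$, so that $\mathcal H^s_{2^{-N}}(L_f(0))\le N_N2^{-Ns}$. Classifying each such interval by the integer slope of its piece and using the refinement recursion, the expectation $\rE[N_N]$ (with $p=1/2$) satisfies a finite linear system: large slopes feed only into larger slopes and so contribute subexponentially, while a new zero can be spawned only when a neighbouring value is odd, which by the parity of the recursion occurs only at every second level, so the system closes over two levels into a Fibonacci-type recursion whose multiplier is the golden ratio $\varphi=(1+\sqrt5)/2$. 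This gives $\rE[N_N]\le C\varphi^{N/2}=C\,2^{Nd_0}$, whence for any $s>d_0$, by Fatou's lemma $\rE\big[\mathcal H^s(L_f(0))\big]\le\liminf_N\rE[N_N]2^{-Ns}=0$, so $\dim_H L_f(0)\le d_0$ almost surely. Here the main obstacle is the exact identification of the growth rate: one must verify that the infinite family of slope-types genuinely collapses to a finite recursion and that its two-step multiplier is exactly $\varphi$, i.e. that the spawning dynamics is truly Fibonacci and that off-diagonal and large-slope contributions do not inflate the exponent.
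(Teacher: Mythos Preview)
Your setup (the integer recursion for $c_j^{(N)}$, the identity $c_1^{(N)}=W_N$, and the ``no strictly adjacent opposite signs'' lemma) is correct and useful. But each of the three arguments has a genuine gap.

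\medskip
\textbf{Part (i), the probability.} The displayed equivalence is false as a deterministic statement: for $\omega_n=(-1)^n$ one has $W_N\in\{0,1\}$ for all $N$, yet $L_f(0)$ is the attractor of the IFS $\{x\mapsto x/4,\ x\mapsto 1-x/4\}$, an uncountable Cantor set of dimension $1/2$. What you need is the almost-sure version (the event $\{W_N\ge 0\ \forall N\}\cap\{W_N=0\ \text{i.o.}\}$ is null, and on its complement your dichotomy holds), after which your gambler's-ruin computation is fine; but the sentence ``the function keeps that sign near the relevant region and only finitely many isolated zeros survive'' does not prove it. The paper takes a different route: it isolates a local configuration (a \emph{$Z$-shape}: three consecutive segments of slopes $\pm 2,0,\pm 2$ straddling the axis), shows that a $Z$-shape appears precisely when the walk executes the pattern $(-1,0,1)$, and proves directly that in its absence the number of zero-intervals eventually stabilizes.

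\medskip
\textbf{Part (i), positive dimension.} Your branching picture does not survive the dependence you yourself flag. In Model~1 every interval with $f_N\equiv 0$ carries the \emph{same} shifted copy of $f$, so the ``disjoint affine subcopies \dots\ again infinite with a probability bounded below'' are in fact all infinite or all finite together; there is no independent survival to drive a Frostman construction. The paper sidesteps this by tracking a single scalar, the number $N_n$ of $Z$-shapes: one has $N_{n+1}\geq N_n$ deterministically and $N_{n+2}\geq 3N_n$ with probability at least $pq$ independently in $n$, so the law of large numbers gives exponential growth. The passage to a dimension lower bound is then a covering argument, not a mass-distribution argument: because all $Z$-shapes at level $n$ are congruent, any efficient cover of $L_f(0)$ must contain at least $N_n$ intervals at its coarsest scale.

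\medskip
\textbf{Part (ii).} The assertion that ``large slopes feed only into larger slopes and so contribute subexponentially'' is incorrect: under $(\omega_{2n},\omega_{2n+1})=(-1,-1)$ a segment of slope $2i$ above the axis produces one of slope $2(i-1)$ touching the axis, so the slope index performs a genuine nearest-neighbour random walk and large slopes feed back toward $0$. Consequently the system does \emph{not} close to a finite Fibonacci recursion over two levels. The paper's proof reflects exactly this difficulty: for each truncation level $k$ it sets up a $(k{+}1)\times(k{+}1)$ tridiagonal matrix $A_k$ with $\rE(\mathbf{x}_{n+1}\mid\FF_n)\leq A_k\mathbf{x}_n$, uses Perron--Frobenius to build a positive supermartingale $\rho_k^{-n}\mathbf{v}_k\mathbf{x}_n$, and obtains $\dim_H L_f(0)\leq\log\rho_k/\log 4$ a.s.\ for each $k$. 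The golden ratio appears only as $\liminf_k\rho_k$, established via an explicit three-term recursion for the characteristic polynomials of the $A_k$. Your first-moment/Fatou shell would work once $\rE[N_N]\leq C\varphi^{N/2}$ is known, but that estimate requires precisely this truncation-and-limit analysis, which your sketch replaces by a claim that is not true.
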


\begin{theorem} \label{thm:zero-set-flexible}
Assume Model 2. If $p=1/2$, then $\dim_H L_f(0)=d_0$ a.s.
\end{theorem}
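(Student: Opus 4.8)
\emph{The plan is to} exploit the self-affine structure of $f$ to realize $L_f(0)$ as (essentially) the limit set of a random subtree of the binary tree, and then read off the dimension from the growth rate of that tree, which I expect to be governed by the golden ratio $\gamma:=(1+\sqrt5)/2$. Note that $d_0=\log\gamma/\log4$, so the target identity is equivalent to a per-level growth rate of $2^{d_0}=\gamma^{1/2}$, i.e.\ a two-step (scale-$1/4$) growth rate of $4^{d_0}=\gamma$, which is exactly the Fibonacci eigenvalue.

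\emph{Set-up.} On a dyadic interval $I=[j/2^n,(j+1)/2^n]$ the partial sum $f_n=\sum_{k<n}2^{-k}\omega_k\phi(2^kx)$ is affine with integer slope, and $f|_I$ equals this affine piece plus $2^{-n}g_I$, where $g_I$ is an independent Model-2 copy on $[0,1]$. Encoding the affine piece by the integers $u_I:=2^nf(j/2^n)$ and $w_I:=2^nf((j+1)/2^n)$ (so the slope is $w_I-u_I$), the two children of $I$ carry the states $(u_I,w_I)\mapsto(2u_I,\,u_I+w_I+\omega)$ and $(u_I,w_I)\mapsto(u_I+w_I+\omega,\,2w_I)$, where $\omega=\pm1$ is the single fair sign attached to $I$. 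Since $|g_I|$ is bounded, an interval with $u_Iw_I>0$ and $\min(|u_I|,|w_I|)$ large cannot meet $L_f(0)$; and when $u_Iw_I<0$ but $|u_I|,|w_I|$ are large, the recursion shows one endpoint value shrinks by $O(1)$ per level while the intersecting interval has a unique sign-change child. Thus far-from-origin ``live'' intervals form non-branching transients, and all genuine branching (creation of two or more live children) happens among a finite set of recurrent states near the origin; the evolution of live intervals is a multitype branching process driven by the i.i.d.\ signs.

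\emph{Lower bound.} I would first keep only the sign-change intervals ($u_Iw_I<0$ or an endpoint value $0$): by the intermediate value theorem each meets $L_f(0)$, and the sign-change children of such an interval are nested inside it, so they generate a random subtree $\TT^*$ of the binary tree whose limit set (points lying in a nested sequence of its intervals) is contained in $L_f(0)$. Restricted to the recurrent types, $\TT^*$ is a multitype Galton--Watson tree, and the heart of the matter is to show that its mean offspring matrix $M$ (at $p=q=\tfrac12$) has Perron eigenvalue $\gamma^{1/2}$, equivalently that $M^2$ obeys the Fibonacci relation $x^2=x+1$ with eigenvalue $\gamma$. Granting this, $\TT^*$ is supercritical, hence survives with positive probability, and on survival the standard theory of Hausdorff dimension of limit sets of branching/random-recursive constructions (Hawkes, Falconer, Mauldin--Williams) gives dimension $\log\gamma^{1/2}/\log2=d_0$. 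Hence $\dim_HL_f(0)\ge d_0$ with positive probability.

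\emph{Upper bound and conclusion.} Let $N_n$ be the number of level-$n$ intervals meeting $L_f(0)$; these cover $L_f(0)$ by sets of diameter $2^{-n}$. Using the same mean transition operator I expect $\sE[N_n]\le C\gamma^{n/2}=C\,2^{nd_0}$, so for any $s>d_0$, $\sum_n\sE[2^{-ns}N_n]\le C\sum_n 2^{(d_0-s)n}<\infty$; Borel--Cantelli gives $2^{-ns}N_n\to0$ a.s., whence $\mathcal{H}^s(L_f(0))=0$ a.s.\ and $\dim_HL_f(0)\le d_0$ a.s. Finally, altering a single sign $\omega_{n,j}$ changes $f$, and hence $L_f(0)$, on only one dyadic block, so it cannot change the supremum of the dimensions of the infinitely many i.i.d.\ dyadic pieces of $L_f(0)$; a zero--one law then shows $\dim_HL_f(0)$ is an a.s.\ constant, which combined with the two bounds must equal $d_0$. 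The main obstacle is the exact spectral computation $\rho(M)=\gamma^{1/2}$: the state space is a priori unbounded, so I must justify truncating to the recurrent near-origin types (controlling the escaping/transient mass, which also ensures the transients do not inflate $\sE[N_n]$ beyond the branching rate) before diagonalizing to the characteristic polynomial $x^2-x-1$. This is the probabilistic counterpart of the joint-spectral-radius computation behind Theorem~\ref{thm:flexible-upper-bound}, and as there it appears to be a lucky coincidence that it closes in exact form; a secondary point is to check, via the same band/state analysis, that non-sign-change (touching) zeros add nothing beyond $d_0$ to the upper bound.
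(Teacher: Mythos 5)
Your blueprint is structurally close to the paper's proof (zero criterion, sign-change intervals, a recursive tree of live intervals, golden-ratio growth rate), but it has two genuine gaps, both at load-bearing points. First, the entire argument hinges on the unproven claim that the mean offspring matrix $M$ of your multitype process has Perron eigenvalue $\gamma^{1/2}$, and your plan to get it — truncate to a ``finite set of recurrent near-origin types'' and diagonalize to $x^2-x-1$ — does not close in the form you expect. The type space is an infinite ladder of slope states (intervals meeting $L_f(0)$ can have arbitrarily large slope), and no finite truncation has spectral radius exactly $\gamma$ per two dyadic levels: in the paper's Model 1 analysis the truncated matrices $A_k$ have spectral radii $\rho_k$ that only converge to $(1+\sqrt5)/2$ as $k\to\infty$, and extracting that limit requires a delicate characteristic-polynomial asymptotics, not a $2\times2$ Fibonacci identity. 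The paper sidesteps the infinite ladder entirely for Model 2: it represents $L_f(0)\cap[0,1/2]$ \emph{exactly} as a Mauldin--Williams random recursive construction on a ternary tree in which every node has exactly three offspring a.s.\ and the randomness sits in i.i.d.\ contraction ratios $2^{-(1+\tau_1)},2^{-(1+\tau_1)},2^{-\tau_2}$, where $\tau_m$ is the hitting time of level $m$ by the symmetric simple random walk of slopes. The dimension then comes from the Moran-type equation $2\rE\big(2^{-s(1+\tau_1)}\big)+\rE\big(2^{-s\tau_2}\big)=1$, solved in closed form via the generating function $\psi_1(x)=(1-\sqrt{1-x^2})/x$; the golden ratio enters through $\psi_1$, which is exactly the summation over your unbounded transient excursions that you were hoping to truncate away.

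Second, your upgrade from ``positive probability'' to ``almost surely'' is unsound as stated. The dyadic pieces of $L_f(0)$ are not i.i.d.\ — whether a block meets the zero level depends on the coarse affine part determined by earlier signs, and most blocks contain no zeros — and $\dim_H L_f(0)$ is not evidently a tail function: changing one low-level sign $\omega_{n,j}$ changes $f$ on a whole block and can change which blocks meet level $0$, so Kolmogorov's zero-one law does not apply to the quantity you name. The correct mechanism (and the paper's) is renewal at $x=0$: since $f(0)=0$ always and the slope of $f_n$ on $I_{n,0}$ is a symmetric simple random walk, it returns to $0$ infinitely often a.s., and at each return a fresh, independent copy of the basic figure appears; consequently the ternary construction never goes extinct and no survival conditioning is needed at all — the Mauldin--Williams theorem gives $\dim_H L_f(0)=d_0$ outright a.s. In your branching formulation the analogue would be: infinitely many independent regeneration attempts each succeed with fixed positive probability, hence a.s.\ some supercritical subtree survives inside every neighborhood of $0$. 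Your upper bound sketch (first moments plus Borel--Cantelli) is fine in spirit, but its input $\rE[N_n]\le C\gamma^{n/2}$ again presupposes the spectral computation, so both halves of your proof rest on the step you have deferred.
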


Which functions $f\in\TT_v$, specifically, satisfy $\dim_H L_f(0)=d_0$? Again the Gray Takagi function provides an example.

\begin{proposition} \label{prop:Gray-Takagi-zero-set}
Let $f$ be the Gray Takagi function. Then $\dim_H L_f(0)=d_0$.
\end{proposition}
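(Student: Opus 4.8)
The plan is to turn the equation $f(x)=0$ into a graph-directed self-similar system, exploiting that the Gray signs are rigidly tied to the binary digits of $x$. Writing $x=0.\epsilon_1\epsilon_2\cdots$ in binary, one has $r_n(x)=(-1)^{\epsilon_n}$ for $n\ge 1$ and $r_0\equiv 1$, and substituting into \eqref{eq:Gray-Takagi} yields the pair of functional equations
\begin{equation*}
f(x)=x+\tfrac12 f(2x)\quad(0\le x\le\tfrac12),\qquad f(x)=(1-x)-\tfrac12 f(2-2x)\quad(\tfrac12\le x\le 1).
\end{equation*}
Feeding the trivial bound $|f|\le 1$ back into these two identities, I would first record the two facts needed for pruning: a short induction gives $f\ge 0$ on $[0,\tfrac12]$ and $f\ge-\tfrac12$ on $[0,1]$.

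For a line $y=cu+d$ set $A_{c,d}:=\{u\in[0,1]:f(u)=cu+d\}$, so that $L_f(0)=A_{0,0}$. Applying the first functional equation on $[0,\tfrac12]$ and the second on $[\tfrac12,1]$ decomposes $A_{c,d}$ into a $\tfrac12$-scaled copy of $A_{c-1,\,2d}$ (carried by the left half) together with a $\tfrac12$-scaled, orientation-reversing copy of $A_{c+1,\,-2c-2d}$ (carried by the right half). Thus the zero set is the attractor of the graph-directed iterated function system generated by the substitution
\begin{equation*}
(c,d)\longmapsto\{(c-1,2d),\ (c+1,-2c-2d)\},
\end{equation*}
all of whose maps contract by $\tfrac12$. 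The bounds $f\ge 0$ on $[0,\tfrac12]$ and $f\ge-\tfrac12$ let me discard every state whose line fails to meet the range of $f$; I expect the surviving states reachable from $A_{0,0}$ to form the finite, strongly connected set $\{A_{0,0},A_{1,0},A_{2,-2},A_{3,0},A_{2,0}\}$, with the single extra branch $A_{0,0}\to A_{-1,0}$ collapsing to the point $\{0\}$.

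The transitions on these five states are $A_{0,0}\to A_{1,0}$, $A_{1,0}\to A_{0,0},A_{2,-2}$, $A_{2,-2}\to A_{3,0}$, $A_{3,0}\to A_{2,0}$ and $A_{2,0}\to A_{1,0}$: a $2$-cycle and a $4$-cycle sharing the vertex $A_{1,0}$. The incidence matrix $M$ then has characteristic polynomial $\lambda(\lambda^4-\lambda^2-1)$, so its spectral radius is the positive root of $\lambda^4=\lambda^2+1$, namely $\sqrt{\varphi}$ with $\varphi=(1+\sqrt5)/2$. Since every edge contracts by $\tfrac12$ and the images of the left- and right-half maps have disjoint interiors (so the open set condition holds), the Mauldin--Williams formula gives $\dim_H L_f(0)=s$, where $(\tfrac12)^s\sqrt\varphi=1$; that is, $s=\log\varphi/\log 4=d_0$, as claimed.

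The main obstacle is the bookkeeping in the middle step: verifying that the pruning is exact, i.e. that each discarded line genuinely misses the graph of $f$ (this is precisely where the bounds $f\ge-\tfrac12$ and $f\ge 0$ on $[0,\tfrac12]$ are essential), and that no solutions are lost at the shared endpoint $x=\tfrac12$ of the two functional equations or through tangential contacts. One must also confirm the open set condition carefully, so that the box-counting growth $\#\{j:0\in f([j2^{-N},(j+1)2^{-N}])\}\asymp\varphi^{N/2}$, which already yields $\dim_H L_f(0)\le d_0$, is matched by a genuine Hausdorff lower bound. The emergence of $\sqrt\varphi$ as the spectral radius is the same kind of fortunate algebraic coincidence noted after Theorem~\ref{thm:flexible-upper-bound}, and the resulting value $d_0$ agrees with the almost-sure dimension furnished by Theorem~\ref{thm:zero-set-flexible}.
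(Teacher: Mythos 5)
Your proof is correct, and it takes a genuinely different route from the paper's. The paper begins with the same functional equation \eqref{eq:Gray-Takagi-FE}, but then proceeds by hand: it exhibits the explicit points $x_m=1-\sum_{i=1}^m 4^{-(2i-1)}$ and shows via \eqref{eq:Gray-self-similar} that $L_f(0)$ consists of $0$, $x^*=11/15$, and an infinite sequence of similar copies of itself with ratios $4^{-(2m-1)}$, so that the dimension solves the single classical Moran equation $\sum_{m=1}^\infty 4^{-(2m-1)s}=1$; the positivity input there ($f>0$ on $(0,x^*)$) plays exactly the role of your pruning bounds. Your finite-state system checks out in every detail: the substitution $(c,d)\mapsto\{(c-1,2d),(c+1,-2c-2d)\}$ is an exact set identity (no solutions are lost at $x=\tfrac12$, since both branches of the functional equation hold there by continuity); the three discarded states $(1,-4)$, $(4,-6)$, $(3,-4)$ carry lines lying entirely below $-1$, so even the crude bound $f\geq -T\geq -2/3$ disposes of them; and the only nontrivial pruning fact, $f\geq 0$ on $[0,1/2]$ (needed for $A_{-1,0}=\{0\}$), follows by a short bootstrap: combining the two branches gives $f(x)=\tfrac12-\tfrac14 f(2-4x)\geq\tfrac12-\tfrac14\sup f>0$ on $[1/4,1/2]$, while $f(x)\geq\tfrac12\inf_{[0,1/2]}f$ on $[0,1/4]$, so $m:=\inf_{[0,1/2]}f\geq\min\{m/2,\,1/4\}$ forces $m\geq 0$. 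The characteristic polynomial $\lambda(\lambda^4-\lambda^2-1)$ and spectral radius $\sqrt{\varphi}$ are correct, the open set condition holds with $U=(0,1)$ at every vertex, and the singleton branch through $A_{-1,0}$ is a dimension-zero condensation that cannot affect the answer. In fact the two arguments are two packagings of one self-similarity: inducing your graph on first returns to $A_{1,0}$ yields $A_{1,0}=\{0\}\cup\bigl(\tfrac12-\tfrac14 A_{1,0}\bigr)\cup\bigl(1-\tfrac1{16}A_{1,0}\bigr)$, whose Moran equation $4^{-s}+16^{-s}=1$ is the same golden-ratio equation $u+u^2=1$ with $u=4^{-s}$ that the paper obtains, and the paper's $x^*$ is precisely the periodic point of your $4$-cycle. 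What your route buys is a mechanical procedure requiring no lucky guess of the points $x_m$, and one that adapts to other levels and to lines of nonzero slope (in the spirit of Corollary \ref{cor:integer-slope}); what the paper's route buys is brevity and self-containedness, since it needs only the ordinary Moran equation rather than graph-directed machinery.
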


Next, for $f\in\TT_v$, define
\begin{equation*}
M_f:=\max_{x\in[0,1]} f(x),
\end{equation*}
and
\begin{equation*}
\MM_f:=\{x\in[0,1]: f(x)=M_f\}=L_f(M_f).
\end{equation*}
Note that $f\leq T$ for all $f\in\TT_v$, so $M_f\leq 2/3$. The following theorem was proved in \cite{Allaart1} and is included here for comparison with Model 2.

\begin{theorem} \label{thm:maximal-set-rigid}
Assume Model 1. 
\begin{enumerate}[(i)]
\item If $p\geq 1/2$, then the distribution of $M_f$ is singular continuous and $\dim_H \MM_f=1-(2p)^{-1}$ a.s.
\item If $p<1/2$, then the distribution of $M_f$ is discrete and $\MM_f$ is finite a.s.
\end{enumerate}
\end{theorem}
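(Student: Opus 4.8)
The plan is to reduce both $M_f$ and the geometry of $\MM_f$ to a nearest-neighbour random walk. For $n\ge 0$ let $g_n\in\TT_c$ be the function with sign sequence $(\omega_n,\omega_{n+1},\dots)$, so $g_0=f$ and, by self-affinity of the terms in \eqref{eq:our-functions}, $g_n(u)=\omega_n u+\tfrac12 g_{n+1}(2u)$ on $[0,\tfrac12]$ and $g_n(u)=\omega_n(1-u)+\tfrac12 g_{n+1}(2u-1)$ on $[\tfrac12,1]$. Setting $H_n(s):=\max_{u\in[0,1]}\{g_n(u)+su\}$, one has $M_f=H_0(0)$, and maximizing separately over the two halves yields
\begin{equation*}
H_n(s)=\tfrac12\max\bigl\{\,H_{n+1}(s+\omega_n),\ (s+\omega_n)+H_{n+1}(s-\omega_n)\,\bigr\}.
\end{equation*}
Since every $g_n\in\TT_c$ is symmetric about $\tfrac12$, we get $H_n(s)=s+H_n(-s)$, so $h_n(s):=H_n(s)-s/2$ is even, convex and $1/2$-Lipschitz; writing $B_1,B_2$ for the two branches above gives the clean identity $B_2-B_1=s+h_{n+1}(s-\omega_n)-h_{n+1}(s+\omega_n)$.

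First I would read a maximizing binary digit as the choice of the larger branch: digit $0$ sends $s\mapsto s+\omega_n$ and digit $1$ sends $s\mapsto s-\omega_n$. From the identity for $B_2-B_1$ and $|h_{n+1}'|\le\tfrac12$ it follows that $B_1=B_2$ when $s=0$ (a genuine tie that splits the path into the two children $s=\pm1$), while $\sgn(B_2-B_1)=\sgn(s)$ for $|s|\ge1$. Hence for $s\ge1$ the maximizing digit is forced, with $s_{n+1}=s_n-\omega_n$ (the mirror rule holds for $s\le-1$), so the state steps toward $0$ with probability $p$ and away with probability $q$. Because in Model 1 all abscissae share the same signs $\omega_n$, every live branch sits at state $\pm s_n$ simultaneously and they all split at exactly the levels where the walk returns to $0$; thus $\MM_f$ is a homogeneous Cantor set whose level-$N$ dyadic covering consists of $2^{B_N}$ equal intervals, where $B_N$ counts the split levels up to $N$ (one at the origin, one at each return of the walk to $0$).

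The dimension is then a renewal computation. The gaps between split levels are i.i.d.\ copies of $1+\tau$, where $\tau$ is the first hitting time of $0$ from state $1$; for $p>\tfrac12$ the walk is positive recurrent with $\rE\tau=1/(2p-1)$, the mean gap is $2p/(2p-1)$, and the strong law gives $B_N/N\to(2p-1)/(2p)$ a.s. Covering $\MM_f$ by its $2^{B_N}$ level-$N$ intervals yields $\dim_H\MM_f\le 1-1/(2p)$; for the reverse inequality I would apply the mass-distribution principle to the measure assigning mass $2^{-B_N}$ to each of the equal-length level-$N$ survivors, noting that a ball of radius $2^{-N}$ meets only boundedly many of them, so its mass is $O(2^{-B_N})$ and $\dim_H\MM_f\ge 1-1/(2p)$. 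When $p<\tfrac12$ the same walk has upward drift, hence is transient and returns to $0$ only finitely often a.s.; only finitely many splits occur, $\MM_f$ is finite, and $M_f$ takes one of countably many values fixed by the finite branching pattern, so its law is discrete.

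The two most delicate points, which I expect to be the real work, are the degenerate ties and the singular continuity of $M_f$ for $p\ge\tfrac12$. Excluding ties away from the origin amounts to showing $\rP(H_{n+1}(2)=2+H_{n+1}(0))=0$, i.e.\ that $h_{n+1}$ is a.s.\ not affine on $[0,2]$, which should follow from strict convexity of $H_{n+1}$ at its corner, inherited from the randomness of $g_{n+1}$. For singular continuity, absence of atoms follows from the self-referential distributional recursion for the law of $M_f$, into which the infinitely many independent epochs (present a.s.\ when $p\ge\tfrac12$) inject fresh randomness, while singularity follows because this law is carried by the value set of the walk-driven series, a Lebesgue-null set. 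The main obstacle is this last pair of statements, since both require quantitative control of how the shared-sign coupling spreads the maximizing mass; by contrast the upper bound, the walk reduction, and the transient regime are routine once the recursion is established.
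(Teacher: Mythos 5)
First, a caveat on the comparison itself: the paper contains \emph{no} proof of this statement --- it is stated with the remark ``The following theorem was proved in \cite{Allaart1} and is included here for comparison with Model 2,'' so your proposal can only be judged on its own merits and against the general random-walk machinery the paper uses elsewhere. On those terms your skeleton is sound and is in the right spirit: the recursion $H_n(s)=\tfrac12\max\{H_{n+1}(s+\omega_n),\,(s+\omega_n)+H_{n+1}(s-\omega_n)\}$ checks out (I verified it from the two-scale identity), as do the evenness of $h_n$, the genuine tie at $s=0$, the simultaneity of all splits in Model 1 (the crucial $\TT_c$ homogeneity), the renewal computation $B_N/N\to(2p-1)/(2p)$ with mean gap $1+\rE\tau_1=2p/(2p-1)$, the mass-distribution lower bound, and transience for $p<1/2$. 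Note one simplification you missed: since $h_{n+1}$ is $1/2$-Lipschitz, $B_2-B_1\geq s-1$, so ties away from the origin can only occur at $|s|=1$; for $|s|\geq 2$ the digit is forced unconditionally.

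The genuine gaps are the ones you flag, plus one you don't, and all are closable inside your own framework. (a) \emph{Ties.} Your event $\rP(H_{n+1}(2)=2+H_{n+1}(0))=0$ is exactly right, but ``strict convexity at the corner'' is not the mechanism: $H_{n+1}(2)=2+H_{n+1}(0)$ unwinds to $\max g_{n+1}=0$, i.e.\ $g_{n+1}\leq 0$, and evaluating at dyadic points gives $g_{n+1}(2^{-j})=2^{-j}S_j$ with $S_j$ the partial-sum walk of the future signs; thus a tie forces $S_j\leq 0$ for all $j$, an event of probability zero precisely when $p\geq 1/2$ (and the case $\omega_n=-1$ at $s=1$ is deterministically impossible, as it would force $g_{n+1}(1/2)\leq -1<-2/3$). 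For $p<1/2$ ties \emph{do} occur with positive probability, so your ``forced digit'' dichotomy is not literally a.s.\ true in regime (ii); it survives because at such a tie the extra branch attains its maximum only at the common dyadic endpoint, adding no new maximum points --- this needs saying. (b) \emph{The law of $M_f$.} Telescoping your own recursion $h_n(s_n)=\tfrac12 h_{n+1}(s_{n+1})+\tfrac14(s_n+\omega_n)$ along the reflected path $s_{n+1}=|s_n-\omega_n|$ yields the closed formula
\begin{equation*}
M_f=\frac12\sum_{n:\,s_n=0,\ \omega_n=+1}2^{-n},
\end{equation*}
valid for every sign sequence (ties do not affect values). Since returns to $0$ occur at even times, this puts the law of $M_f$ on $\bigl\{\tfrac12\sum_{j\in\Delta}4^{-j}:\Delta\subset\ZZ_+\bigr\}$, a self-similar null set of dimension $1/2$ (singularity); atoms are excluded for $p\geq 1/2$ because a fixed value determines the infinite pattern of signs at returns, whose conditional probability is at most $\lim_k(\max(p,q))^k=0$; and for $p<1/2$ discreteness is immediate since finitely many returns give countably many values --- a cleaner route than your ``fixed by the finite branching pattern'' claim, which as stated is slightly off (the value depends on the signs at the splits, not the split levels alone). (c) \emph{The boundary case $p=1/2$.} Your positive-recurrence renewal argument fails there ($\rE\tau_1=\infty$); you need the null-recurrent fact that the number of returns by level $N$ is $o(N)$ a.s.\ (indeed of order $\sqrt{N}$), which gives $\dim_H\MM_f=0=1-(2p)^{-1}$, while splits still occur infinitely often so the continuity argument in (b) still applies.
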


In fact, the paper \cite{Allaart1} specifies the distributions of $M_f$ and the cardinality of $\MM_f$ in considerable detail under the assumption of Model 1. The analysis appears to be much harder for Model 2, and we describe here only what happens when $p>1/\sqrt{2}$. Note the contrast with the previous theorem.

\begin{theorem} \label{thm:maximal-set-flexible}
Assume Model 2.
\begin{enumerate}[(i)]
\item The probability that $M_f$ attains the maximum possible value of $2/3$ is
\begin{equation*}
\rP(M_f=2/3)=\max\left\{\frac{2p^2-1}{p^3},0\right\}.
\end{equation*}
\item If $p>1/\sqrt{2}$, then
\begin{equation*}
\dim_H \MM_f=\frac{\log(2p^2)}{\log 4} \qquad \mbox{a.s.},
\end{equation*}
and the distribution of $M_f$ is discrete, supported on the set
\begin{equation*}
\left\{\frac12\sum_{j\in\Delta}4^{-j}: \Delta\subset \ZZ_+, \ \#(\ZZ_+\backslash \Delta)<\infty\right\}.
\end{equation*}
\end{enumerate}
\end{theorem}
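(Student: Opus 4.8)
The plan is to exploit the self-affine structure of the functions in $\TT_v$. Writing a generic $g\in\TT_v$ as $g(x)=\omega_0 x+\tfrac12 g_L(2x)$ on $[0,1/2]$ and $g(x)=\omega_0(1-x)+\tfrac12 g_R(2x-1)$ on $[1/2,1]$, where $g_L,g_R$ are independent copies of $g$ and $\omega_0=\pm1$ with probabilities $p,q$, I first record the slope--value recursion for $G^{(b)}:=\max_{0\le u\le1}(bu+g(u))$,
\[
G^{(b)}=\max\Big(\tfrac12 G_L^{(b+\omega_0)},\ \tfrac{b+\omega_0}{2}+\tfrac12 G_R^{(b-\omega_0)}\Big),
\]
so that $M_f=G^{(0)}$. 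The deterministic ceilings $v_b:=\sup_g G^{(b)}=\max_u(bu+T(u))$ are computed from the same recursion with all signs $+1$; one finds $v_0=2/3$, $v_b=b+\tfrac23 2^{-b}$ for $b\ge1$, and $v_{-b}=\tfrac23 2^{-b}$. The crucial consequences are the strict inequalities $\tfrac12 v_{\pm2}<v_{\pm1}$, which force the slope to return from $\pm1$ to $0$, together with the equalities $\tfrac12 v_1=v_0$ and $\tfrac12+\tfrac12 v_{-1}=v_0$, which make \emph{both} slope-$\pm1$ children of a slope-$0$ node attain the ceiling. Thus a path realizing a value close to $2/3$ has slopes oscillating $0,\pm1,0,\pm1,\dots$, and excursions to $|b|\ge2$ occur only where the local normalized maximum has dropped below $13/12$; a separate estimate will be needed to show these excursions are negligible.

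For part (i) I would read the event $\{M_f=2/3\}$ off the recursion. Requiring $G^{(0)}=v_0$ forces $\omega_0=1$ together with $G_L^{(1)}=v_1$ or $G_R^{(-1)}=v_{-1}$; and $G^{(\pm1)}=v_{\pm1}$ in turn forces the corresponding sign to be $+1$ and the unique slope-$0$ grandchild to again attain its ceiling (the slope-$\pm2$ alternatives are excluded because $\tfrac12 v_{\pm2}<v_{\pm1}$). Hence the nodes at which the ceiling is attained form the progeny of a Galton--Watson process whose offspring number is $N=\mathbf{1}\{\omega_0=1\}\cdot\mathrm{Bin}(2,p)$, with mean $\rE N=2p^2$; independence across disjoint subtrees is guaranteed by Model 2. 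Therefore $\pi_0:=\rP(M_f=2/3)$ is the survival probability $1-\eta$, where $\eta$ is the smallest root of $\eta=q+p(q+p\eta)^2$. Solving yields the nonzero fixed point $\pi_0=(2p^2-1)/p^3$ precisely when $2p^2>1$ and $\pi_0=0$ otherwise, the claimed formula; one checks $0\le\pi_0\le1$ from $p^3-2p^2+1=(p-1)(p^2-p-1)\ge0$ on $(0,1]$.

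For part (ii), assume $p>1/\sqrt2$, i.e. $2p^2>1$, so the above process is supercritical. I would first establish the discreteness and support statement by showing that a.s. the maximum is attained in a subregion on which the rescaled function attains its own ceiling. The accumulated value along the optimal descent increases by exactly $\tfrac12 4^{-j}$ at each ``clean'' pair of generations $j$ (the good case $\omega=1$), while a ``defect'' forces a strictly smaller increment; since each slope-$0$ node independently admits a perfectly clean continuation with probability $\pi_0>0$, the number of defects is stochastically dominated by a geometric variable and hence finite a.s. Consequently $M_f=\tfrac12\sum_{j\in\Delta}4^{-j}$ with $\ZZ_+\setminus\Delta$ finite, giving the stated support, and below the last defect $\MM_f$ coincides with the ceiling-attaining set of an independent copy of $f$.

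It then remains to compute the dimension of this ceiling-attaining set, which is a random recursive (Mauldin--Williams/Mandelbrot) construction: each surviving slope-$0$ interval of length $\delta$ produces, two generations later, $N$ surviving slope-$0$ subintervals of length $\delta/4$ in distinct quarters, with $N$ as in part (i) and $\rE N=2p^2$. By the standard theory of such constructions---equivalently, the a.s. Hausdorff dimension of the boundary of a supercritical Galton--Watson tree---the dimension equals $\log(\rE N)/\log 4=\log(2p^2)/\log 4$ on the event of non-extinction. For the upper bound I would estimate the expected number $\rE[N_n]$ of maximal dyadic intervals of generation $n$, obtaining $\rE[N_n]\le C(2p^2)^{n/2}$ after controlling the large-slope excursions flagged above, and conclude $\dim_H\MM_f\le\log_4(2p^2)$ a.s. by a first-moment covering argument; for the matching lower bound I would place the branching measure on a surviving subtree of $\MM_f$ and verify a Frostman energy estimate, the almost-sure (rather than merely positive-probability) conclusion following because the maximum is attained below the last defect in a clean copy whose tree therefore survives. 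I expect the main obstacle to be exactly this interface between the global nature of ``being a maximizer'' and the local branching rules: proving rigorously that defects are finite and that slope excursions beyond $\{-1,0,1\}$ do not inflate the dimension is where the careful work lies.
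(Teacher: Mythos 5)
Your part (i) is correct and is in substance identical to the paper's argument: your slope-indexed recursion for $G^{(b)}$ and the ceilings $v_b$ are a repackaging of the paper's direct observation that flat pieces of $f_{2n}$ at the running ceiling $\sum_{i<n}\tfrac12 4^{-i}$ reproduce as a Galton--Watson process with offspring law $(1-2p^2q-p^3,\,2p^2q,\,p^3)$, mean $2p^2$, whose survival probability is $(2p^2-1)/p^3$; your extinction equation $\eta=q+p(q+p\eta)^2$ is exactly the paper's $t=h(t)$.

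In part (ii) your plan is again the paper's (restart until a branching process survives, then a Mauldin--Williams dimension computation), but it has two genuine gaps, both of which the paper closes with one device you do not have: a sequence of stopping times. First, your geometric-domination argument for ``finitely many defects'' tacitly assumes that after the current process dies there is again a slope-$0$ flat interval at the running maximum from which a fresh independent copy can be launched. This is not automatic: after extinction the maximum of $f_{2n}$ is typically attained only at isolated peak points flanked by slopes $\pm 2$. The paper proves the restart is possible by setting $\tau_k:=\inf\{n\geq \tau_{k-1}+N_k:\ \MM_{2n}\ \text{contains an interval}\}$ and showing $\tau_k<\infty$ a.s., because the slope of $f_{2m+n}$ just to the right of a maximizer performs a $p$-random walk with $p\geq 1/2$ and hence hits $0$; without some such argument your geometric bound on the number of defects is unsupported. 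Second, you flag the upper bound --- controlling excursions to $|b|\geq 2$ --- as the main open obstacle and propose first-moment estimates plus a Frostman lower bound; in fact no excursion estimate is needed. Once some restarted process survives, $M_f=M_{2\tau_{k-1}}+\tfrac23\,4^{-\tau_{k-1}}$ exactly, and every maximizer must lie, at every generation, inside the flat ceiling intervals $\{I_{2(\tau_{k-1}+n),j}:j\in\Lambda_n^{k}\}$, since any point outside them has $f$-value strictly below the ceiling (the same strict-gap reasoning as the paper's zero criterion, using that a peak or a sloped segment misses the ceiling by a definite multiple of $4^{-n}$). Hence $\MM_f$ \emph{coincides} with the limit set $F_k$ of a Mauldin--Williams construction with contraction ratio $1/4$ and mean offspring $2p^2$, and Theorem 1.1 of \cite{MW} delivers both inequalities of $\dim_H\MM_f=\log(2p^2)/\log 4$ at once; the support statement then reads off from the displayed formula for $M_f$ via $\tfrac23\,4^{-m}=\tfrac12\sum_{j\geq m}4^{-j}$, which is also how the paper concludes.
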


It seems plausible (by monotonicity considerations) that $\dim_H \MM_f=0$ a.s. for $p\leq 1/\sqrt{2}$, but the author has not been able to prove this.

The results for the random case are proved in Section \ref{sec:random-case}. Theorem \ref{thm:zero-set-flexible} is proved by casting the zero set as the attractor of a Mauldin-Williams random recursive construction and by using properties of hitting times in a symmetric simple random walk. The proof of Theorem \ref{thm:zero-set-rigid} requires a different approach. For the upper bound, we use the Perron-Frobenius theorem to construct a sequence of positive supermartingales which lead, via the Martingale Convergence Theorem, to successive upper bounds for $\dim_H L_f(0)$; we then prove that these upper bounds converge to $d_0$. For the lower bound we identify a particular combination of line segments, called a ``$Z$-shape", which will appear around the $x$-axis in the step-by-step construction of the graph of $f$ with probability one given that $L_f(0)$ is nonfinite. We show, via the law of large numbers, that the number of $Z$-shapes grows exponentially fast almost surely once a $Z$-shape appears. This, along with some additional observations, gives the lower bound. Finally, Theorem \ref{thm:maximal-set-flexible} is proved by considering a sequence of Galton-Watson branching processes associated with the random construction of $f$.

There are many natural questions still unanswered; Section \ref{sec:open} lists some of them.

\section{Preliminaries} \label{sec:prelim}

The following notation will be used throughout. For an interval $J$, $|J|$ denotes the diameter of $J$, and $J^\circ$ denotes the interior of $J$. The cardinality of a discrete set $\Lambda$ is denoted by $\#\Lambda$.
For $n\in\ZZ_+$ and $f$ defined by \eqref{eq:our-functions}, put
\begin{equation}
f_n(x):=\sum_{k=0}^{n-1}\frac{\omega_k(x)}{2^k}\phi(2^k x).
\label{eq:partial-sum-functions}
\end{equation}
Then $f_n$ is piecewise linear, and the right-hand derivative of $f_n$ at any point $x\in[0,1)$ is
\begin{equation*}
f_n^+(x)=\sum_{k=0}^{n-1}\omega_k(x)r_{k+1}(x),
\end{equation*}
where $r_n$ is defined as in \eqref{eq:Rademacher}.
Thus, $|f_{n+1}^+(x)-f_n^+(x)|=1$ for all $x$ and all $n$, and $f_{n+2}^+(x)-f_{n}^+(x)\in\{-2,0,2\}$. In particular, $f_{2n}^+(x)$ is always even.

Another important observation is that $f(k/2^n)=f_n(k/2^n)$ for integer $k$ and $n\in\ZZ_+$. We need two more elementary facts about the functions $f_n$:

\begin{lemma} \label{lem:tail-bound}
For each $n\in\ZZ_+$, $|f-f_n|<2^{-n}$.
\end{lemma}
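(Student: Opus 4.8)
The plan is to express $f-f_n$ as its tail series, bound it by a geometric series, and then upgrade the resulting non-strict bound to a strict one via a short parity argument about dyadic scales.

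First I would fix $x\in[0,1]$ and write
\[
f(x)-f_n(x)=\sum_{k=n}^\infty\frac{\omega_k(x)}{2^k}\phi(2^k x),
\]
which converges absolutely because $|\omega_k(x)|=1$ and $0\le\phi(2^k x)\le 1/2$ for every $k$. Taking absolute values and using the triangle inequality together with the bound $\phi\le 1/2$ gives
\[
|f(x)-f_n(x)|\le\sum_{k=n}^\infty\frac{1}{2^k}\phi(2^k x)\le\frac12\sum_{k=n}^\infty 2^{-k}=2^{-n},
\]
the final step being the sum of a geometric series. This already establishes the non-strict estimate $|f-f_n|\le 2^{-n}$ uniformly in $x$.

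To obtain the strict inequality I would analyze when equality could hold throughout the chain. Equality would force $\phi(2^k x)=1/2$ for every $k\ge n$, that is $2^k x\in\ZZ+\tfrac12$ for all such $k$. This is impossible: if $2^n x=m+\tfrac12$ for some $m\in\ZZ$, then $2^{n+1}x=2m+1\in\ZZ$, so $\phi(2^{n+1}x)=0\ne\tfrac12$. Hence at least one term of the tail (the $k=n$ term or the $k=n+1$ term) falls strictly below its maximal value, and the geometric bound is therefore strict, giving $|f(x)-f_n(x)|<2^{-n}$ for every $x\in[0,1]$.

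The whole argument is short, and the only point requiring genuine thought is precisely this passage from the non-strict to the strict inequality. The main obstacle, such as it is, lies in identifying the elementary obstruction that a half-integer at scale $k$ becomes an integer at scale $k+1$, so that the maximal contribution $\phi(2^k x)=1/2$ cannot persist across consecutive dyadic levels; everything else is a routine geometric-series estimate.
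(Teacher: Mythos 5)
Your proof is correct, but it takes a different route from the paper's at the decisive point. You bound the tail termwise by $\phi\le 1/2$, which only yields the non-strict estimate $|f-f_n|\le 2^{-n}$, and then recover strictness by the parity observation that $\phi(2^k x)=\tfrac12$ forces $2^{k+1}x\in\ZZ$, so the maximal value cannot occur at two consecutive dyadic scales; since at least one term of the nonnegative series falls strictly short of its maximum, the sum is strictly below $2^{-n}$. The paper instead recognizes the tail sum exactly: $\sum_{k=n}^\infty 2^{-k}\phi(2^k x)=2^{-n}\sum_{m=0}^\infty 2^{-m}\phi(2^m\cdot 2^n x)=2^{-n}T(2^n x)$, where $T$ is the Takagi function, and invokes Kahane's bound $\max T=2/3$ to conclude $|f-f_n|\le \tfrac23\cdot 2^{-n}<2^{-n}$ in one stroke. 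The paper's argument is shorter and quantitatively stronger (constant $2/3$ rather than an unquantified strict inequality), while yours is more self-contained, using nothing beyond $\phi\le\tfrac12$ and elementary arithmetic of dyadic half-integers rather than the sharp maximum of $T$. For the uses made of the lemma in the paper --- in particular the zero criterion, where $f_n\ge 2^{-n}$ on $I_{n,j}$ together with the \emph{strict} bound $|f-f_n|<2^{-n}$ gives $f>0$ --- strictness is exactly what is needed, and your version supplies it; note that your parity step is genuinely necessary, since equality in your geometric bound is approached arbitrarily closely (e.g.\ near $x$ with long runs of $\phi(2^kx)$ close to $\tfrac12$), so the non-strict estimate alone would not suffice.
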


\begin{proof}
We have the estimate
\begin{align*}
|f(x)-f_n(x)|&=\left|\sum_{k=n}^\infty \frac{\omega_k(x)}{2^k}\phi(2^k x)\right|\leq \sum_{k=n}^\infty 2^{-k}\phi(2^k x)\\
&=2^{-n}\sum_{m=0}^\infty 2^{-m}\phi(2^m 2^n x)=2^{-n}T(2^n x).
\end{align*}
Thus, the lemma follows from the bound $T(x)\leq 2/3$.
\end{proof}

\begin{lemma} \label{lem:even}
For $n\in\ZZ_+$ and integer $j$, $4^n f_{2n}(j/2^{2n})$ is an even integer.
\end{lemma}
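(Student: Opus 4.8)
The plan is to exploit the observation recorded in the paragraph immediately preceding the lemma—that $f_{2n}^+(x)$ is always even—together with the piecewise-linearity of $f_{2n}$ on dyadic intervals of length $4^{-n}$, and then to telescope. The argument is an induction on $j$ in which each increment turns out to be even after scaling by $4^n$.

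First I would check that $f_{2n}$ is affine on every interval $[j/2^{2n},(j+1)/2^{2n}]$. Indeed, in the representation \eqref{eq:partial-sum-functions} with $n$ replaced by $2n$, each summand $2^{-k}\omega_k(x)\phi(2^k x)$ with $0\leq k\leq 2n-1$ can change its affine form only where $\phi(2^k x)$ has a corner, i.e.\ at multiples of $2^{-(k+1)}$, or where $\omega_k$ jumps, i.e.\ at multiples of $2^{-k}$; since $k\leq 2n-1$, all such breakpoints lie in $2^{-2n}\ZZ$. Hence $f_{2n}$ is affine on each interval of length $4^{-n}$, with constant slope equal to the right-hand derivative $f_{2n}^+(j/2^{2n})=\sum_{k=0}^{2n-1}\omega_k r_{k+1}$ evaluated at $j/2^{2n}$.

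The key step is the parity of this slope: it is a sum of $2n$ terms each equal to $\pm1$, hence congruent to $2n\equiv 0\pmod 2$, so it is an even integer, say $2m_j$ with $m_j\in\ZZ$. Affineness then gives
\[
f_{2n}\left(\frac{j+1}{2^{2n}}\right)-f_{2n}\left(\frac{j}{2^{2n}}\right)=\frac{2m_j}{2^{2n}}=\frac{2m_j}{4^n},
\]
so that $4^n f_{2n}((j+1)/2^{2n})-4^n f_{2n}(j/2^{2n})=2m_j$ is even. Starting from $4^n f_{2n}(0)=0$ and inducting on $j$, every increment is an even integer, whence $4^n f_{2n}(j/2^{2n})$ is an even integer for each $j$ with $0\leq j\leq 2^{2n}$, as claimed.

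I expect no serious obstacle here; the only point requiring a moment's care is confirming that $f_{2n}$ is genuinely affine on each elementary dyadic interval of length $4^{-n}$, so that its increment over such an interval is exactly the slope divided by $4^n$. The evenness of the slope—the substance of the argument—has already been established in the text preceding the lemma, so it can simply be invoked.
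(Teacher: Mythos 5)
Your proof is correct, but it runs along a genuinely different line from the paper's. The paper evaluates $4^n f_{2n}(j/2^{2n})$ directly as the sum $\sum_{k=0}^{2n-1}\omega_k(j/2^{2n})\,2^{2n-k}\phi\bigl(j/2^{2n-k}\bigr)$ and makes a purely arithmetic observation: each quantity $2^m\phi(j/2^m)$ is an integer with the same parity as $j$ (since $\phi(j/2^m)$ is $j'/2^m$ or $(2^m-j')/2^m$ with $j'=j \mod 2^m$), so all $2n$ terms share a parity and an even number of equal-parity integers sums to an even integer. You instead use the evenness of the slope $f_{2n}^+$ (a sum of $2n$ signs $\pm 1$, as noted in the preliminaries), verify that $f_{2n}$ is affine on each interval of length $4^{-n}$ because every breakpoint of every summand lies in $2^{-2n}\ZZ$, and telescope from $f_{2n}(0)=0$; continuity of $f_{2n}$ at the dyadic breakpoints (which the paper establishes in the introduction) is what licenses the telescoping, and you rightly flag the affineness check as the one point needing care. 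The trade-off: the paper's argument is a self-contained pointwise identity requiring no induction and no regularity of $f_{2n}$, and as a by-product it yields the finer fact that each term has the parity of $j$; your argument is arguably more conceptual in that it exploits exactly the structural fact (even slopes of $f_{2n}$) that the rest of Section 3 is built on, and it adapts immediately to show, more generally, that $2^m f_m(j/2^m)\equiv mj \pmod 2$ for any $m$. Both are complete proofs; yours just routes the parity through the derivative rather than through the values of $\phi$.
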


\begin{proof}
We can write
\begin{equation}
4^n f_{2n}\left(\frac{j}{2^{2n}}\right)= \sum_{k=0}^{2n-1}\omega_k\left(\frac{j}{2^{2n}}\right)2^{2n-k}\phi\left(\frac{j}{2^{2n-k}}\right).
\label{eq:even-partial-sum}
\end{equation}
Now for each integer $m\geq 1$, $2^m\phi(j/2^m)$ is an integer with the same parity as $j$. To see this, note that $\phi(j/2^m)$ is either $j'/2^m$ or $(2^m-j')/2^m$, where $j'=j \mod 2^m$, and $j$, $j'$ and $2^m-j'$ all have the same parity. Thus, the terms in the right hand side of \eqref{eq:even-partial-sum} are either all even or all odd. Since there is an even number of them, their sum is even.
\end{proof}

We now introduce the closed intervals
\begin{gather*}
I_{n,j}:=\left[\frac{j}{2^n},\frac{j+1}{2^n}\right], \qquad n\in\ZZ_+, \quad j=0,1,\dots,2^n-1,\\
J_{n,k}:=\left[\frac{2k}{4^n},\frac{2k+2}{4^n}\right], \qquad n\in\ZZ_+,\ \ k\in\ZZ.
\end{gather*}
We denote by $\omega_{n,j}$ the value of $\omega_n$ on $I_{n,j}^\circ$, and by $s_{n,j}$ the slope of $f_n$ on $I_{n,j}$. Observe that
\begin{equation}
s_{n+1,2j}=s_{n,j}+\omega_{n,j}, \qquad s_{n+1,2j+1}=s_{n,j}-\omega_{n,j}.
\label{eq:slope-transition}
\end{equation}

\section{Universal upper bounds} \label{sec:upper-bounds}

In this section we prove the universal bounds of Subsection \ref{subsec:sharp-bounds}. After developing some notation, we prove Theorems \ref{thm:rigid-upper-bound} and \ref{thm:Gray-upper-bound} in Subsection \ref{subsec:Rigid-bounds}. The next two subsections together prove Theorem \ref{thm:flexible-upper-bound}. In Subsection \ref{subsec:extremal-function} we construct a function $f\in\TT_v$ and an ordinate $y$ for which $L_f(y)$ has Hausdorff dimension $d_v^*$, and in Subsection \ref{subsec:flexible-upper-bound} we show that $d_v^*$ is a universal upper bound for $\dim_H L_f(y)$. Subsection \ref{subsec:lines} gives a proof of Corollary \ref{cor:integer-slope}.

Following De Amo et al.~\cite{ABDF}, we divide the strip $[0,1]\times\RR$ into closed rectangles
$$R_{n,j,k}:=I_{2n,j}\times J_{n,k}, \qquad n\in\ZZ_+,\ 0\leq j<2^{2n},\ k\in\ZZ.$$
Let
\begin{gather*}
E_{n,k}:=\{j: R_{n,j,k}\ \mbox{intersects the graph of $f$}\},\\
E_{n,k}^0:=\left\{j: f_{2n}\equiv 2k/4^n\ \mbox{on}\ I_{2n,j}\right\},\\
E_{n,k}^1:=\{j: \mbox{the interior of $R_{n,j,k}$ intersects the graph of $f_{2n}$}\}.
\end{gather*}
Put
\begin{equation}
N_{n,k}:=\#E_{n,k}, \qquad N_{n,k}^0:=\#E_{n,k}^0, \qquad N_{n,k}^1:=\#E_{n,k}^1,
\label{eq:N-def}
\end{equation}
and
$$M_n:=\max_{k\in\ZZ} N_{n,k}, \qquad M_n^0:=\max_{k\in\ZZ} N_{n,k}^0, \qquad M_n^1:=\max_{k\in\ZZ} N_{n,k}^1.$$
Lemma \ref{lem:even} implies that if $j\in E_{n,k}^1$, then $R_{n,j,k}$ intersects the graph of $f_{2n}$ in a nonhorizontal line segment. Moreover, if $R_{n,j,k}$ intersects the graph of $f$, then by Lemma \ref{lem:tail-bound} the graph of $f_{2n}$ must intersect $R_{n,j,k-1}\cup R_{n,j,k}\cup R_{n,j,k+1}$. Hence, for each $k$,
\begin{equation*}
N_{n,k}\leq N_{n,k}^0+N_{n,k+1}^0+N_{n,k-1}^1+N_{n,k}^1+N_{n,k+1}^1,
\end{equation*}
so that
\begin{equation}
M_n\leq 2M_n^0+3M_n^1.
\label{eq:M-upper-estimate}
\end{equation}

\subsection{Rigid case: Proofs of Theorems \ref{thm:rigid-upper-bound} and \ref{thm:Gray-upper-bound}} \label{subsec:Rigid-bounds}

\begin{proof}[Proof of Theorem \ref{thm:rigid-upper-bound}]
Let $f\in\TT_c$. We claim that
\begin{equation}
M_{n+1}^0\leq \max\{2M_n^0,M_n^1\},
\label{eq:claim1}
\end{equation}
and
\begin{equation}
M_{n+1}^1\leq 2M_n^0+M_n^1.
\label{eq:claim2}
\end{equation}
To verify \eqref{eq:claim1} and \eqref{eq:claim2}, we consider four cases regarding $\omega_{2n}$ and $\omega_{2n+1}$. Let $\hat{{\bf s}}_{2n,j}:=(s_{2n+2,4j},s_{2n+2,4j+1},s_{2n+2,4j+2},s_{2n+2,4j+3})$.

\bigskip
{\em Case 1:} $\omega_{2n}=\omega_{2n+1}=1$. If $j\in E_{n,k}^0$, then $s_{2n,j}=0$ and hence $\hat{\bf s}_{2n,j}=(2,0,0,-2)$. Thus, $4j+1$ and $4j+2$ lie in $E_{n+1,4k+1}^0$, while $4j$ and $4j+3$ lie in $E_{n+1,4k}^1$. If $j\in E_{n,k}^1$, then $f_{2n+2}$ is monotone on $I_{2n,j}$. In particular, if $s_{2n,j}=2$, then $\hat{\bf s}_{2n,j}=(4,2,2,0)$ and hence, $4j+3\in E_{n+1,4k+4}^0$. Similarly, if $s_{2n,j}=-2$, then $4j\in E_{n+1,4k+4}^0$. If $|s_{2n,j}|\geq 4$, then no component of $\hat{\bf s}_{2n,j}$ is equal to $0$. We see that: 
\begin{itemize}
\item For $k'=4k+1$, each $j$ in $E_{n,k}^0$ contributes two members to $E_{n+1,k'}^0$ while each $j$ in $E_{n,k}^1$ contributes none. Hence
\begin{equation}
N_{n+1,4k+1}^0=2N_{n,k}^0.
\label{eq:case1-part1}
\end{equation}
\item For $k'=4k$, each $j$ in $E_{n,k}^0$ or $j$ in $E_{n,k}^1$ contributes no members to $E_{n+1,k'}^0$ while each $j$ in $E_{n,k-1}^1$ contributes at most one. Hence,
\begin{equation}
N_{n+1,4k}^0\leq N_{n,k-1}^1.
\label{eq:case1-part2}
\end{equation}
\item For $k'=4k+2$ or $k'=4k+3$, neither $j$ in $E_{n,k}^0$ nor $j$ in $E_{n,k}^1$ contributes to $E_{n+1,k'}^0$, so
\begin{equation}
N_{n+1,4k+2}^0=N_{n+1,4k+3}^0=0.
\label{eq:case1-part3}
\end{equation}
\end{itemize}
From \eqref{eq:case1-part1}, \eqref{eq:case1-part2} and \eqref{eq:case1-part3}, \eqref{eq:claim1} follows in Case 1. 

The verfication of \eqref{eq:claim2} is simpler: It is clear that each $j\in E_{n,k}^0$ contributes at most two members to any set $E_{n+1,k'}^1$ and each $j\in E_{n,k}^1$ contributes at most one by the monotonicity of $f_{2n+2}$ on $I_{2n,j}$. From this, \eqref{eq:claim2} follows.

\bigskip
{\em Case 2:} $\omega_{2n}=1$, $\omega_{2n+1}=-1$. Now if $j\in E_{n,k}^0$ we have $\hat{\bf s}_{2n,j}=(0,2,-2,0)$ and so $4j$ and $4j+3$ lie in $E_{n+1,4k}^0$, while $4j+1$ and $4j+2$ belong to $E_{n+1,4k}^1$. If $j\in E_{n,k}^1$ then $f_{2n+2}$ is monotone on $I_{2n,j}$, so $j$ contributes at most one member to any set $E_{n+1,k'}^1$. If in particular $s_{2n,j}=2$, then $\hat{\bf s}_{2n,j}=(2,4,0,2)$ and so $4j+2\in E_{n+1,4k+3}^0$. Similarly, if $s_{2n,j}=-2$, then $4j+1\in E_{n+1,4k+3}^0$. If $|s_{2n,j}|\geq 4$, then $\hat{\bf s}_{2n,j}$ does not have any zero components. We conclude that:
\begin{itemize}
\item For $k'=4k$, each $j$ in $E_{n,k}^0$ contributes two members to $E_{n+1,k'}^0$ while $j$ in $E_{n,k}^1$ or $j$ in $E_{n,k-1}^1$ contributes none. Hence
\begin{equation}
N_{n+1,4k}^0=2N_{n,k}^0.
\label{eq:case2-part1}
\end{equation}
\item For $k'=4k+3$, each $j$ in $E_{n,k}^0$ contributes no members to $E_{n+1,k'}^0$ and each $j$ in $E_{n,k}^1$ contributes at most one. Hence
\begin{equation}
N_{n+1,4k+3}^0\leq N_{n,k}^1.
\label{eq:case2-part2}
\end{equation}
\item Finally, for $k'=4k+1$ or $k'=4k+2$, neither $j$ in $E_{n,k}^0$ nor $j$ in $E_{n,k}^1$ contributes to $E_{n+1,k'}^0$, so
\begin{equation}
N_{n+1,4k+1}^0=N_{n+1,4k+2}^0=0.
\label{eq:case2-part3}
\end{equation}
\end{itemize}
From \eqref{eq:case2-part1}, \eqref{eq:case2-part2} and \eqref{eq:case2-part3}, \eqref{eq:claim1} follows in Case 2. And \eqref{eq:claim2} follows in the same way as in Case 1.

\bigskip
{\em Case 3:} $\omega_{2n}=-1, \omega_{2n+1}=1$. This case is similar to Case 2, by symmetry.

\bigskip
{\em Case 4:} $\omega_{2n}=\omega_{2n+1}=-1$. This case is similar to Case 1, by symmetry.

\bigskip

Using \eqref{eq:claim1}, \eqref{eq:claim2} and the initial conditions $M_0^0=1$ and $M_0^1=0$, it is straightforward to verify inductively that
\begin{equation*}
M_n^0\leq 2^n, \qquad M_n^1\leq 2(2^n-1).
\end{equation*}
Hence, by \eqref{eq:M-upper-estimate}, $M_n\leq 2^{n+1}+6(2^n-1)\leq 2^{n+3}$. It follows that for each $y$, $L_f(y)$ is covered by at most $2^{n+3}$ intervals of length $4^{-n}$. Thus,
\begin{equation*}
\dim_H L_f(y) \leq \lim_{n\to\infty}\frac{(n+3)\log 2}{n\log 4}=\frac12.
\end{equation*}

We next show that $y$ given by \eqref{eq:rigid-extremal-level-set} satisfies $\dim_H L_f(y)\geq 1/2$. 
The equalities in \eqref{eq:case1-part1} and \eqref{eq:case2-part1} show (by induction) that there is a sequence of ordinates $y_n:=2k_n/4^n, n\in\ZZ_+$, such that $N_{n,k_n}^0=2^n$ for each $n$. Specifically, $k_0=0$, and 
\begin{equation*}
k_{n+1}=\begin{cases}
4k_n+1, & \mbox{in Case 1},\\
4k_n, & \mbox{in Case 2 or 3},\\
4k_n-1, & \mbox{in Case 4}.
\end{cases}
\end{equation*}
Note that in particular, $k_{n+1}-4k_n=(\omega_{2n}+\omega_{2n+1})/2$.
Now let $K_n:=\bigcup\{I_{2n,j}:j\in E_{n,k_n}^0\}$.
The sets $\{K_n\}$ are compact and nested, so the intersection $K:=\bigcap_{n=0}^\infty K_n$ is a nonempty compact set. The intersection of $K_{n+1}$ with any interval $I:=I_{2n,j}$ in $K_n$ consists of two (non-overlapping) subintervals of one-fourth the length of $I$. Hence, by the basic theory of Moran fractals, $\dim_H K=\log 2/\log 4=1/2$. Now let $y:=\lim_{n\to\infty} y_n$, which exists since $|y_{n+1}-y_n|\leq 4^{-n}$. If $x\in K$, then $f_{2n}(x)=y_n$ for each $n$, and taking limits gives $f(x)=y$. Hence, $L_f(y)\supset K$, and so $\dim_H L_f(y)\geq 1/2$. One checks easily that the ordinate $y$ thus constructed is the one given by \eqref{eq:rigid-extremal-level-set}.

It remains to establish that the set $\{y:\dim_H L_f(y)=1/2\}$ is dense in the range of $f$. The key observation is that, when viewed as random variables on the probability space $[0,1)$ with Borel sets and Lebesgue measure, the slopes $f_n^+$, $n\in\ZZ_+$ form a symmetric simple random walk, so they take the value $0$ infinitely often with probability one. This implies that the sets
\begin{equation*}
E_m:=\bigcup_{n=m}^\infty\ \bigcup_{j: s_{n,j}=0} I_{n,j}, \qquad m\in\ZZ_+
\end{equation*}
have full measure in $[0,1]$, so they are in particular dense in $[0,1]$. Since $f$ is continuous, the set
\begin{equation*}
f(E_m)=\bigcup_{n=m}^\infty\ \bigcup_{j: s_{n,j}=0} f(I_{n,j})
\end{equation*}
is therefore dense in the range of $f$. Now if $s_{n,j}=0$, the restriction of the graph of $f$ to $I_{n,j}$ is a similar copy of the graph of some function $g\in \TT_c$, so by the previous part of the proof there is a $y\in f(I_{n,j})$ such that $\dim_H L_f(y)\geq 1/2$. It therefore suffices to show that for any subinterval $J$ of the range of $f$ there is a pair $(n,j)$ with $s_{n,j}=0$ for which $f(I_{n,j})\subset J$. Given such a $J$, choose $m\in\ZZ_+$ so large that $2^{1-m}<\frac12|J|$. Then if $n\geq m$ and $s_{n,j}=0$, we have
\begin{equation*}
\diam\big(f(I_{n,j})\big)\leq 2|I_{n,j}|=2^{1-n}\leq 2^{1-m}<\frac12|J|.
\end{equation*}
The denseness of $f(E_m)$ thus implies that at least one such interval $f(I_{n,j})$ must be fully contained in $J$. This completes the proof.
\end{proof}

\begin{proof}[Proof of Theorem \ref{thm:Gray-upper-bound}]
Let $f$ be the Gray Takagi function. Then $\omega_{n,j}=(-1)^j$ for all $n$ and $j$, and it is easy to use this along with \eqref{eq:slope-transition} to prove inductively that for each $n$, $s_{n,0}=n$ and $|s_{n,j+1}-s_{n,j}|=2$ for all $j$. This implies that $s_{n,j}\equiv n+2j\ (\mod 4)$ for all $n$ and all $j$. Thus, we have the following substitutions $s_{2n,j}\mapsto (s_{2n+2,4j},\dots,s_{2n+2,4j+3})$:

\bigskip
\begin{tabular}{lrlclrl}
$n$ even: & $0$ & $\mapsto (2,0,-2,0)$ & & $n$ odd: & $0$  &$\mapsto (0,-2,0,2)$\\
& $2$ & $\mapsto (2,0,2,4)$ &&& $2$ & $\mapsto (4,2,0,2)$\\
& $-2$ & $\mapsto (-2,-4,-2,0)$ \hspace{0.2in} &&& $-2$ & $\mapsto (0,-2,-4,-2)$.
\end{tabular}

\bigskip
Now we can infer that each horizontal line segment in the graph of $f_{2n}$ spawns two horizontal line segments at {\em different} levels in the graph of $f_{2n+2}$, and each line segment of slope $2$ in the strip $[0,1]\times J_{n,k}$ spawns one horizontal line segment at a {\em different} level than that spawned by a line segment of slope $-2$ in the same strip. Of course, if $|s_{2n,j}|\geq 4$, then $f_{2n+2}$ is strictly monotone on $I_{2n,j}$. Since $f_{2n}(0)=f_{2n}(1)=0$, exactly half the $N_{n,k}^1$ line segments intersecting the interior of the strip $[0,1]\times J_{n,k}$ have positive slope. From these observations, we conclude that
\begin{equation*}
M_{n+1}^0\leq M_n^0+\frac12 M_n^1, \qquad M_{n+1}^1\leq 2M_n^0+M_n^1.
\end{equation*}
With the initial values $M_0^0=1$ and $M_0^1=0$, it follows by induction that $M_n^0\leq 2^{n-1}$ and $M_n^1\leq 2^n$ for all $n\geq 1$. As in the proof of Theorem \ref{thm:rigid-upper-bound}, this implies that $\dim_H L_f(y)\leq 1/2$ for all $y$.

To complete the proof, we will show that $\dim_H L_f(2/5)=1/2$. Set $y_1:=1/2$ and $K_1:=[0,1/2]$. The left half of the graph of $f_2$ consists of two line segments with slopes $(2,0)$. Denote the union of these two line segments by $X$. In the graph of $f_4$, this figure is replaced by 8 line segments with slopes $(4,2,0,2,0,-2,0,2)$. See Figure \ref{fig:Gray-Takagi-substitution}. Observe that the third and fourth of these 8 line segments together form a similar copy of $X$, reduced by $1/4$ and rotated by $180^\circ$; and similarly for the seventh and eighth. Moreover, these two copies of $X$ sit at the same height in the graph of $f_4$, with their horizontal parts at height $y_2:=3/8$. We put $K_2:=[1/8,1/4]\cup[3/8,1/2]$. Next, in the transition from $f_4$ to $f_6$, each of these two rotated copies of $X$, now having slopes $(0,2)$, is replaced by a piecewise linear figure with slopes $(2,0,-2,0,2,0,2,4)$. This new figure contains two smaller copies of the original figure $X$, rotated back and sitting at equal height in the graph of $f_6$; see the second part of Figure \ref{fig:Gray-Takagi-substitution}. Thus, overall, the graph of $f_6$ contains (at least) four similar copies of $X$ with the same orientation, and with their horizontal part at height $y_3:=3/8+1/32$. Let $K_3$ be the (closure of) the union of the projections of these four copies of $X$ onto the $x$-axis.

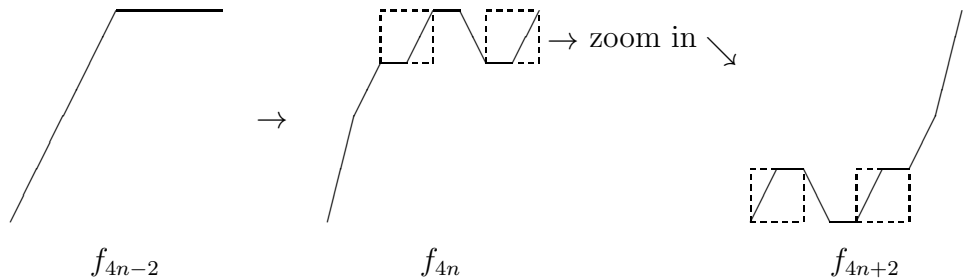
\begin{figure}
\begin{center}
\begin{picture}(360,120)(0,0)
\put(0,20){\line(1,2){40}}
\put(40,100){\line(1,0){40}}
\put(93,60){\makebox(0,0)[tl]{$\to$}}
\put(120,20){\line(1,4){10}}
\put(130,60){\line(1,2){10}}
\put(140,80){\line(1,0){10}}
\put(150,80){\line(1,2){10}}
\put(160,100){\line(1,0){10}}
\put(170,100){\line(1,-2){10}}
\put(180,80){\line(1,0){10}}
\put(190,80){\line(1,2){10}}
\multiput(140,80)(0,4){5}{\line(0,1){2}}
\multiput(140,80)(4,0){5}{\line(1,0){2}}
\multiput(160,80)(0,4){5}{\line(0,1){2}}
\multiput(140,100)(4,0){5}{\line(1,0){2}}
\multiput(180,80)(0,4){5}{\line(0,1){2}}
\multiput(180,80)(4,0){5}{\line(1,0){2}}
\multiput(200,80)(0,4){5}{\line(0,1){2}}
\multiput(180,100)(4,0){5}{\line(1,0){2}}
\put(203,90){\makebox(0,0)[tl]{$\to$}}
\put(219,94){\makebox(0,0)[tl]{zoom in}}
\put(263,90){\makebox(0,0)[tl]{$\searrow$}}
\put(280,20){\line(1,2){10}}
\put(290,40){\line(1,0){10}}
\put(300,40){\line(1,-2){10}}
\put(310,20){\line(1,0){10}}
\put(320,20){\line(1,2){10}}
\put(330,40){\line(1,0){10}}
\put(340,40){\line(1,2){10}}
\put(350,60){\line(1,4){10}}
\multiput(280,20)(4,0){5}{\line(1,0){2}}
\multiput(280,20)(0,4){5}{\line(0,1){2}}
\multiput(280,40)(4,0){5}{\line(1,0){2}}
\multiput(300,20)(0,4){5}{\line(0,1){2}}
\multiput(320,20)(4,0){5}{\line(1,0){2}}
\multiput(320,20)(0,4){5}{\line(0,1){2}}
\multiput(320,40)(4,0){5}{\line(1,0){2}}
\multiput(340,20)(0,4){5}{\line(0,1){2}}
\put(30,10){\makebox(0,0)[tl]{$f_{4n-2}$}}
\put(155,10){\makebox(0,0)[tl]{$f_{4n}$}}
\put(310,10){\makebox(0,0)[tl]{$f_{4n+2}$}}
\end{picture}
\end{center}
\caption{The substitution $(2,0)\mapsto (4,2,0,2,0,-2,0,2)$ when going from $f_{4n-2}$ to $f_{4n}$, and the substitution $(0,2)\mapsto (2,0,-2,0,2,0,2,4)$ when going from $f_{4n}$ to $f_{4n+2}$, in the construction of the Gray Takagi function.}
\label{fig:Gray-Takagi-substitution}
\end{figure}

It is not hard to see that this pattern continues: Each copy of $X$ in the graph of $f_{2n}$ induces two similar copies of $X$ in the graph of $f_{2n+2}$, reduced by $1/4$ and rotated $180^\circ$. Both copies sit at the same height in the graph of $f_{2n+2}$, with their horizontal part at level $y_{n+1}=(1/2)\sum_{i=0}^n(-1/4)^i$. Thus, by induction, the graph of $f_{2n+2}$ contains $2^{n}$ copies of $X$, equally oriented with their horizontal parts at level $y_{n+1}$. Let $K_{n+1}$ be the closure of the union of their projections onto the $x$-axis. 

The sets $\{K_n\}$ are nested; put $K:=\bigcap_{n=1}^\infty K_n$. Note that $y_n\to 2/5$. Now $x\in K$ implies that for each $n$, $|f_{2n}(x)-y_n|\leq 2(1/4)^{n}$, and letting $n\to\infty$ we get $f(x)=2/5$. Thus, $K\subset L_f(2/5)$. Finally, $K$ is a two-part generalized Cantor set with constant reduction ratio $1/4$, and hence, $\dim_H K=1/2$.
\end{proof}

\subsection{Flexible case: construction of an extremal function} \label{subsec:extremal-function}

We construct a function $f\in\TT_v$ and an ordinate $y$ such that $\dim_H L_f(y)\geq d_v^*$. We start with $f_0\equiv 0$, and describe inductively how to construct $f_{2n+2}$ from $f_{2n}$ for each $n$. At the same time, we construct a sequence of ordinates $\{y_n\}$, called {\em baselines}, which converge to the desired ordinate $y$. We also construct a nested sequence $\{K_n\}$ of compact subsets of $[0,1]$ such that everywhere on $K_n$, the slope of $f_{2n}$ takes values in $\{-2,0,2\}$ only. This sequence $\{K_n\}$ will converge to a compact limit set $K$, which will be a subset of $L_f(y)$ and will have Hausdorff dimension $d_v^*$. The idea of the construction is to build the successive approximants $f_{2n}$ in such a way as to maximize the asymptotic growth rate of the number of intervals $I_{2n,j}$ contained in the level set $L_{f_{2n}}(y_n)$.

Define the functions
\begin{equation*}
\phi_{n,j}(x):=\begin{cases} 2^{-n}\phi(2^n x), & \mbox{if $x\in I_{n,j}$}\\
0, & \mbox{otherwise}.
\end{cases}
\end{equation*}
Set $f_0\equiv 0$, $y_0=0$, and $K_0=[0,1]$. For $n\in\ZZ_+$, assume that $f_{2n}$, $y_n$ and $K_n$ have already been constructed in such a way that on $K_n$, the slope of $f_{2n}$ takes values in $\{-2,0,2\}$ only. Fix $j\in\{0,1,\dots,2^{2n}-1\}$, and note that on $I_{2n,j}$, $f_{2n+2}$ is obtained from $f_{2n}$ by adding 
$$\omega_{2n,j}\phi_{2n,j}+\omega_{2n+1,2j}\phi_{2n+1,2j}+\omega_{2n+1,2j+1}\phi_{2n+1,2j+1}.$$
Thus, the inductive construction of $f_{2n+2}$ from $f_{2n}$ on $I_{2n,j}$ is determined by the choice of the vector of signs ${\bf w}:=(\omega_{2n,j},\omega_{2n+1,2j},\omega_{2n+1,2j+1})$. If $I_{2n,j}$ does not lie in $K_n$, then it does not matter how we choose ${\bf w}$, but for definiteness, we put ${\bf w}=(1,1,1)$. Assume now that $I_{2n,j}\subset K_n$, and consider four cases. Let $s=s_{2n,j}$, so that $s\in\{-2,0,2\}$.

\bigskip
{\em Case 1:} $n\equiv 0\ (\mod 4)$. In this case we move the baseline up by setting $y_{n+1}=y_n+\frac12{(\frac14)}^n$. If $s=0$, we take ${\bf w}=(1,1,1)$; if $s=2$, we take ${\bf w}=(-1,1,-1)$; and if $s=-2$, we take ${\bf w}=(-1,-1,1)$.

\bigskip
{\em Case 2:} $n\equiv 1\ (\mod 4)$. In this case we do not move the baseline: $y_{n+1}=y_n$. If $s=0$, put ${\bf w}=(-1,1,1)$. If $s=\,\pm 2$ and $f_{2n}\leq y_n$ on $I_{2n,j}$, we put ${\bf w}=(1,1,1)$; otherwise, we put ${\bf w}=(-1,-1,-1)$.

\bigskip
{\em Case 3:} $n\equiv 2\ (\mod 4)$. In this case we move the baseline down by setting $y_{n+1}=y_n-\frac12{(\frac14)}^n$. If $s=0$, we take ${\bf w}=(-1,-1,-1)$; if $s=2$, we take ${\bf w}=(1,-1,1)$; and if $s=-2$, we take ${\bf w}=(1,1,-1)$.

\bigskip
{\em Case 4:} $n\equiv 3\ (\mod 4)$. In this case we do not move the baseline: $y_{n+1}=y_n$. If $s=0$, put ${\bf w}=(1,-1,-1)$. If $s=\,\pm 2$, we choose ${\bf w}$ the same way as in Case 2.

\bigskip
The transition from $f_{2n}$ to $f_{2n+2}$ is illustrated in Figure \ref{fig:substitutions} for Cases 1 and 2. Substitutions not shown in the figure are obtained by symmetry. Cases 3 and 4 are mirror images of Cases 1 and 2, respectively.

\begin{figure}
\begin{center}
\begin{picture}(380,230)(0,20)
\put(0,230){\makebox(0,0)[tl]{a)}}
\put(20,150){\line(1,0){40}}
\put(80,160){\makebox(0,0)[tl]{$\to$}}
\put(110,150){\line(1,2){10}}
\put(120,170){\line(1,0){20}}
\put(140,170){\line(1,-2){10}}
\put(113,160){\makebox(0,0)[br]{\footnotesize $2$}}
\put(148,160){\makebox(0,0)[bl]{\footnotesize $2$}}
\put(240,150){\line(1,2){40}}
\put(300,190){\makebox(0,0)[tl]{$\to$}}
\put(330,150){\line(1,2){10}}
\put(340,170){\line(1,0){10}}
\put(350,170){\line(1,2){10}}
\put(360,190){\line(1,4){10}}
\put(250,195){\makebox(0,0)[tl]{$2$}}
\put(333,159){\makebox(0,0)[br]{\footnotesize $2$}}
\put(363,177){\makebox(0,0)[br]{\footnotesize $2$}}
\put(373,205){\makebox(0,0)[br]{\footnotesize $4$}}
\qbezier[100](20,150)(195,150)(370,150)
\put(34,132){\makebox(0,0)[tl]{$f_{2n}$}}
\put(117,132){\makebox(0,0)[tl]{$f_{2n+2}$}}
\put(254,132){\makebox(0,0)[tl]{$f_{2n}$}}
\put(337,132){\makebox(0,0)[tl]{$f_{2n+2}$}}
\put(0,110){\makebox(0,0)[tl]{b)}}
\put(20,100){\line(1,0){40}}
\put(80,90){\makebox(0,0)[tl]{$\to$}}
\put(110,100){\line(1,0){10}}
\put(120,100){\line(1,-2){10}}
\put(130,80){\line(1,2){10}}
\put(140,100){\line(1,0){10}}
\put(117,92){\makebox(0,0)[tl]{\footnotesize $2$}}
\put(144,92){\makebox(0,0)[tr]{\footnotesize $2$}}
\put(240,100){\line(1,-2){40}}
\put(300,60){\makebox(0,0)[tl]{$\to$}}
\put(330,100){\line(1,0){10}}
\put(340,100){\line(1,-2){20}}
\put(360,60){\line(1,-4){10}}
\put(250,55){\makebox(0,0)[bl]{$2$}}
\put(353,88){\makebox(0,0)[br]{\footnotesize $2$}}
\put(363,70){\makebox(0,0)[br]{\footnotesize $2$}}
\put(373,37){\makebox(0,0)[br]{\footnotesize $4$}}
\qbezier[100](20,100)(195,100)(370,100)
\end{picture}
\end{center}
\caption{The substitutions of Case 1 (top) and Case 2 (bottom). The dotted line indicates the baseline. Numbers are absolute values of slopes.}
\label{fig:substitutions}
\end{figure}
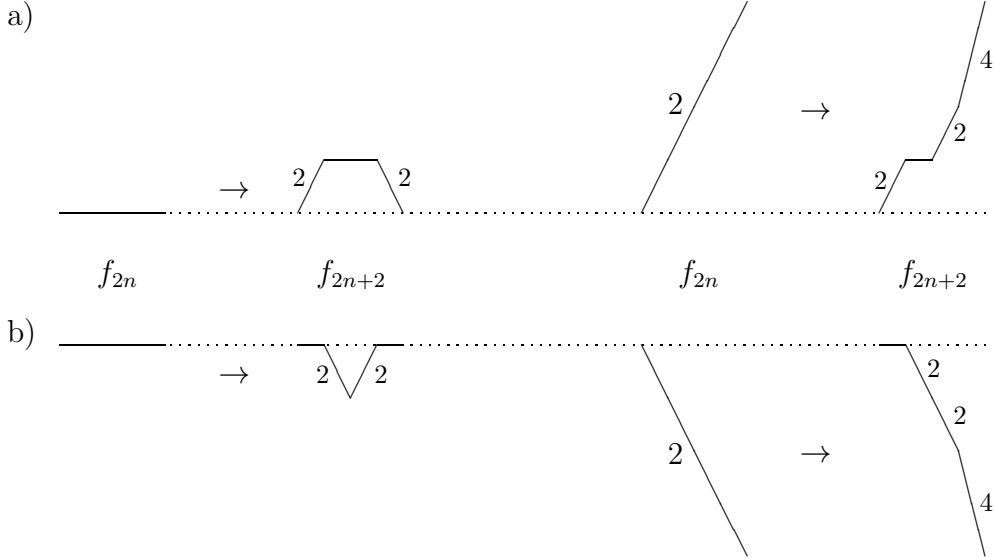

Now that $f_{2n+2}$ and $y_{n+1}$ have been constructed, we define $K_{n+1}$ as follows. For $j=0,1,\dots,2^{2n+2}-1$, put the interval $I_{2n+2,j}$ in $K_{n+1}$ if and only if $I_{2n+2,j}\subset K_n$ and $f_{2n+2}(x)=y_{n+1}$ for some $x\in I_{2n+2,j}$. Then on each interval $I_{2n+2,j}$ in $K_{n+1}$, the slope of $f_{2n+2}$ lies in $\{-2,0,2\}$.

This completes the construction of $\{f_{2n}\}, \{y_n\}$ and $\{K_n\}$. Now let $f:=\lim_{n\to\infty}f_{2n}$, $y:=\lim_{n\to\infty}y_n$, and $K:=\bigcap_{n=0}^\infty K_n$. Then $f\in \TT_v$, and it is not difficult to calculate that $y=8/17$. We claim that $K\subset L_f(y)$. To see this, let $x\in K$ and $n\in\NN$. The slope of $f_{2n}$ at $x$ (if defined) lies in $\{-2,0,2\}$, and there is a point $x_n$ with $|x_n-x|\leq 2^{-2n}$ and $f_{2n}(x_n)=y_n$. Hence, $|f_{2n}(x)-y_n|\leq 2^{1-2n}$. Letting $n\to\infty$ we obtain $f(x)=y$.

\begin{proposition}
We have $\dim_H K=d_v^*$, and consequently, $\dim_H L_f(y)\geq d_v^*$.
\end{proposition}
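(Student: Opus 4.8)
The plan is to treat $K$ as a Moran-type Cantor set and to compute $\dim_H K$ by a census of the intervals of $K_n$ sorted into finitely many \emph{types}. Each interval $I_{2n,j}\subset K_n$ carries the slope $s_{2n,j}\in\{-2,0,2\}$ of $f_{2n}$ together with its position relative to the current baseline $y_n$ (lying on it, or lying above/below it and meeting it at a single endpoint); after taking into account the mirror symmetry relating Cases 1,3 and Cases 2,4 these reduce to three essential types, so the count of $K_n$ is recorded by a vector $\mathbf v_n\in\ZZ_+^3$. First I would read off, case by case from the substitution rules and the figure, how each type of parent interval spawns typed children satisfying the inclusion rule defining $K_{n+1}$; this produces nonnegative integer transition matrices $M^{(0)}$ for the baseline-moving steps ($n\equiv0,2$) and $M^{(1)}$ for the stationary steps ($n\equiv1,3$), the ``pair of $3\times3$ matrices'' of the introduction. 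Setting $P:=M^{(1)}M^{(0)}$, I would verify that the two-step growth is governed by $\rho(P)=\alpha$, the positive root of $\lambda^2-9\lambda-6=0$, so that the total count $N_n:=\|\mathbf v_n\|_1\asymp\alpha^{n/2}$. Since $K$ is covered by the $N_n$ intervals of $K_n$, each of length $4^{-n}$, this already gives the upper bound $\dim_H K\le\liminf_n\frac{\log N_n}{n\log4}=\frac{\log\alpha}{\log16}=d_v^*$.

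For the matching lower bound I would apply the mass distribution principle with a measure adapted to the Perron structure of $P$. Assuming $P$ primitive, let $\mathbf u>0$ be its right Perron eigenvector and put $\mathbf u':=M^{(1)}\mathbf u/\sqrt\alpha$, so that $M^{(0)}\mathbf u'=\sqrt\alpha\,\mathbf u$ and $M^{(1)}\mathbf u=\sqrt\alpha\,\mathbf u'$, with both vectors strictly positive. Define $\mu$ by splitting the mass of each parent interval among its children in proportion to the Perron weight of the child's type; the two eigen-relations make this consistent (children masses sum to the parent's) and force every type-$\sigma$ interval $I$ at level $n$ to receive mass $\mu(I)\asymp\alpha^{-n/2}=|I|^{d_v^*}$, the implied constants being controlled by the bounded ratios of the entries of $\mathbf u$ and $\mathbf u'$. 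Because a ball of radius $r\in[4^{-(n+1)},4^{-n})$ meets at most three level-$n$ intervals, it follows that $\mu\big(B(x,r)\big)\le C r^{d_v^*}$ for all $x$ and $r$, whence $\dim_H K\ge d_v^*$. Combined with the upper bound this gives $\dim_H K=d_v^*$, and since $K\subset L_f(y)$ we conclude $\dim_H L_f(y)\ge d_v^*$.

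The main obstacle is the exact determination and analysis of the transition matrices. Tracking the sub-state of a sloped interval --- whether it lies above or below the baseline and at which endpoint it meets it --- is essential, since counting by slope alone undercounts: for instance a slope-$2$ interval meeting the baseline at its left endpoint from above spawns, at a baseline-raising step, three admissible children (two of slope $2$ and one of slope $0$) rather than the two one would naively expect. Only with the correct sub-states does $P$ acquire spectral radius exactly $\alpha$, and verifying $\rho(P)=\alpha$ is the characteristic-polynomial computation behind the ``lucky coincidence'' noted in the introduction; establishing the primitivity of $P$ is what legitimizes the Perron-eigenvector measure. I expect the remaining steps --- the covering estimate and the Frostman bound for $\mu$ --- to be routine once the matrices and their eigenvectors are in hand.
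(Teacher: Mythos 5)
Your proposal follows essentially the same route as the paper: the paper likewise classifies the intervals of $K_n$ into three types (slope $0$; slope $\pm2$ on the side of the baseline toward which it next moves; slope $\pm2$ on the opposite side), records the transition by a $3\times3$ matrix $A$ at even stages and $B$ at odd stages, and extracts the dimension from the spectral radius $\alpha=(9+\sqrt{105})/2$ of the two-step product, with contraction ratio $1/16$ per double step. The one genuine difference is at the end: the paper simply invokes Marion's theorem on multi-type Moran sets to conclude $\dim_H K=\log\alpha/\log 16$, whereas you reprove that theorem by hand (covering count for the upper bound, Perron-weighted measure plus the mass distribution principle for the lower bound). Your inlined argument is the standard proof of the cited result, so it buys self-containedness at the cost of length; nothing is gained or lost mathematically.

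One concrete correction, though: the primitivity you assume for $P=M^{(1)}M^{(0)}$ fails. With the paper's matrices one gets $BA=\left[\begin{smallmatrix}6&4&0\\6&3&0\\0&1&0\end{smallmatrix}\right]$, which has a zero column because the third type is transient at even stages: a type-3 interval of $K_{2n}$ contains no interval of $K_{2n+2}$. Hence $P$ is reducible, and (under either row/column convention) the Perron weight attached to the even-stage type 3 necessarily vanishes, contradicting your claim that both eigenvectors are strictly positive. The repair is exactly the reduction the paper performs: delete the third row and column to obtain $\hat M=\left[\begin{smallmatrix}6&4\\6&3\end{smallmatrix}\right]$, which is strictly positive (hence primitive) and still has spectral radius $\alpha$, since the characteristic polynomial of $BA$ is $-\lambda(\lambda^2-9\lambda-6)$ and the deleted eigenvalue is $0$. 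Note the elimination is legitimate only at even stages: a type-3 interval at an odd stage does have descendants (the third column of $B$ is $(1,0,1)^t$), so the intermediate-level weights in your factorized relations must be kept positive there. With this adjustment your measure gives zero mass to even-stage type-3 intervals, every surviving level-$2n$ interval receives mass comparable to $\alpha^{-n}=(16^{-n})^{d_v^*}$, and your Frostman estimate and the conclusion $\dim_H K=d_v^*$, hence $\dim_H L_f(y)\geq d_v^*$ via $K\subset L_f(y)$, go through as you describe.
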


\begin{proof}
We shall represent $K$ as a multi-type Moran set. Recall that $K_n$ consists of those intervals $I_{2n,j}$ on which the graph of $f_{2n}$ either coincides with the $n$th baseline or else is a line segment with slope $\pm 2$ which has an endpoint on the baseline. Let $I:=I_{2n,j}$ be an interval in $K_n$ and let $s:=s_{2n,j}$, so $s\in\{-2,0,2\}$. We classify $I$ to be of type 1 if $s=0$ (so $f_{2n}\equiv y_n$ on $I$); of type 2 if $s=\pm\,2$ and $\sgn(f_{2n}-y_n)=\sgn(y_{n+2}-y_n)$ on $I^\circ$; and of type 3 if $s=\pm\,2$ and $\sgn(f_{2n}-y_n)=-\sgn(y_{n+2}-y_n)$ on $I^\circ$. Note that $y_{n+2}\neq y_n$ for each $n$, so this classification exhausts all possible cases. For illustration purposes, we extend the classification of types to the line segment which forms the graph of $f_{2n}$ over $I$. Thus, the line segments of slope $\pm\,2$ above the baseline are of type 2 in Cases 1 and 4 above, and of type 3 in Cases 2 and 3. For line segments below the baseline, the classification is reversed. 

The transition from $K_n$ to $K_{n+1}$ is summarized by a $3\times 3$ matrix $M^{(n)}=[m_{ij}^{(n)}]$, where $m_{ij}^{(n)}$ denotes the number of type $i$ intervals in $K_{n+1}$ contained in a type $j$ interval in $K_n$. Letting $A$ and $B$ be the matrices
\begin{equation*}
A:=\begin{bmatrix}2 & 1 & 0\\2 & 1 & 0\\0 & 1 & 0\end{bmatrix}, \qquad
B:=\begin{bmatrix}2 & 1 & 1\\2 & 1 & 0\\0 & 0 & 1\end{bmatrix},
\end{equation*}
we see from the four cases above that $M^{(n)}=A$ if $n$ is even, and $M^{(n)}=B$ if $n$ is odd. This suggests combining the stages two at a time and looking at the transition from $K_{2n}$ to $K_{2n+2}$, which is governed for each $n$ by the product matrix
\begin{equation*}
M:=BA=\begin{bmatrix}6 & 4 & 0\\6 & 3 & 0\\0 & 1 & 0\end{bmatrix}.
\end{equation*}
At this point we can eliminate type 3, because intervals of type 3 in $K_{2n}$ do not intersect $K_{2n+2}$ and hence do not intersect $K$. Thus, we may delete the last row and column of $M$, and conclude that $K$ is a two-type Moran set with construction matrix
\begin{equation*}
\hat{M}:=\begin{bmatrix}6 & 4\\6 & 3\end{bmatrix}.
\end{equation*}
The spectral radius of $\hat{M}$ is $\alpha=(9+\sqrt{105})/2$. Since each basic interval in $K_{2n+2}$ is $1/16$ the length of a basic interval in $K_{2n}$, the common contraction ratio is $1/16$. It follows from Marion \cite{Marion} that $\dim_H K=\log\alpha/\log 16=d_v^*$.
\end{proof}

The function $f$ constructed above is shown in Figure \ref{fig:extremal-function}, along with the horizontal line $y=8/17$ which intersects the graph of $f$ in a set of dimension $d_v^*$.

\begin{figure}
\begin{center}
\epsfig{file=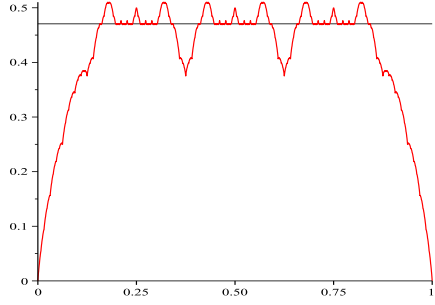, height=.2\textheight, width=.4\textwidth}
\caption{Graph of a function $f\in\TT_v$ attaining the maximal Hausdorff dimension $d_v^*$. The horizontal line represents the level $y=8/17$ for which $\dim_H L_f(y)=d_v^*$.}
\label{fig:extremal-function}
\end{center}
\end{figure}

\subsection{Flexible case: proof of the upper bound} \label{subsec:flexible-upper-bound}

To prove that $d_v^*$ is a universal upper bound for $\dim_H L_f(y)$, we again consider the numbers $N_{n,k}$, $N_{n,k}^0$ and $N_{n,k}^1$ defined in \eqref{eq:N-def}. In this setting, however, it is no longer adequate to examine the intersection of the graph of $f_{2n}$ with a single strip $[0,1]\times J_{n,k}$. Instead, we consider simultaneously two adjacent intervals $J_{n,k-1}$ and $J_{n,k}$, as outlined below.

Let $y\in\RR$ be fixed. For each $n$, $y$ belongs to some $J_{n,j}$, and we may assume that $y$ is not the right endpoint of $J_{n,j}$. If $2j/4^n\leq y<(2j+1)/4^n$, put $k_n=j$. Otherwise, if $(2j+1)/4^n\leq y<(2j+2)/4^n$, put $k_n=j+1$. Now define the numbers
\begin{equation*}
c_n:=N_{n,k_n}^0, \qquad l_n:=N_{n,k_n-1}^1, \qquad u_n:=N_{n,k_n}^1 \qquad (n\in\ZZ_+).
\end{equation*}
The dynamics of the triple $(c_n,l_n,u_n)$ depend in a complicated way on $y$ and $f$. To simplify the analysis, we introduce the new variables
\begin{equation*}
\sigma_n:=l_n+u_n, \qquad m_n:=\max\{l_n,u_n\},
\end{equation*}
and consider the dynamics of the vector ${\bf x}_n:=(c_n,\sigma_n,m_n)^t$, where $\cdot^t$ denotes matrix transpose. For matrices $A=[a_{ij}]$ and $B=[b_{ij}]$ of equal size, write $A\leq B$ if $a_{ij}\leq b_{ij}$ for all $(i,j)$. Let $E$ and $F$ be the matrices
\begin{equation*}
E:=\begin{bmatrix}2 & 0 & 1\\2 & 0 & 2\\2 & 0 & 1\end{bmatrix}, \qquad
F:=\begin{bmatrix}2 & 1 & 0\\2 & 1 & 0\\2 & 0 & 1\end{bmatrix}.
\end{equation*}

\begin{lemma} \label{lem:E-and-F}
For each $n$, either ${\bf x}_{n+1}\leq E{\bf x}_n$ or ${\bf x}_{n+1}\leq F{\bf x}_n$.
\end{lemma}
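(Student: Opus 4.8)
The plan is to prove Lemma~\ref{lem:E-and-F} by a direct case analysis of how the triple $(c_n,\sigma_n,m_n)$ evolves when we pass from the approximant $f_{2n}$ to $f_{2n+2}$, tracking the line segments in the two adjacent strips $J_{n,k_n-1}$ and $J_{n,k_n}$ as they are subdivided. The essential point is already suggested by the structure of the matrices $E$ and $F$: moving from level $2n$ to level $2n+2$ refines each interval $I_{2n,j}$ into four subintervals $I_{2n+2,4j},\dots,I_{2n+2,4j+3}$, and the slope vector $\hat{\bf s}_{2n,j}=(s_{2n+2,4j},\dots,s_{2n+2,4j+3})$ is determined by the triple of signs ${\bf w}=(\omega_{2n,j},\omega_{2n+1,2j},\omega_{2n+1,2j+1})$ via the transition rule~\eqref{eq:slope-transition}. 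So first I would enumerate, for each admissible slope $s_{2n,j}$ and each choice of ${\bf w}$, the resulting four slopes and record which of the refined subintervals land in which strips $J_{n+1,k'}$ relative to the target level $y$.

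The key organizing observation is that only segments of $f_{2n}$ that are horizontal at the relevant level (contributing to $c_n$) or that pass through the two strips with slope $\pm 2$ (contributing to $l_n$ and $u_n$) can produce \emph{new} horizontal or near-level segments at the next stage; segments of slope $|s|\geq 4$ become strictly monotone over $I_{2n,j}$ and pass through a strip too quickly to spawn horizontal pieces. For each source type I would count its contribution to $c_{n+1}$, $l_{n+1}$, and $u_{n+1}$. A horizontal segment ($s=0$) splits as in the rigid analysis of Theorem~\ref{thm:rigid-upper-bound}, contributing two horizontal subintervals to one strip and some slope-$\pm 2$ segments to the adjacent strips, giving the ``$2$'' entries in the first columns of $E$ and $F$. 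A slope-$\pm 2$ segment in the lower or upper strip either produces a new horizontal segment at the level or passes through, and the crux is that for any fixed ${\bf w}$ the geometry forces the outputs into a definite pattern. Because there are only finitely many sign triples and three admissible slopes, this is a bounded (if tedious) enumeration.

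The reason the statement is an inequality with a \emph{dichotomy} — either ${\bf x}_{n+1}\le E{\bf x}_n$ or ${\bf x}_{n+1}\le F{\bf x}_n$ — rather than a single recursion is the $n$-dependence of the coefficients already flagged in the introduction. The signs ${\bf w}$ may be chosen differently on different intervals, but the two matrices $E$ and $F$ should emerge as the two extreme regimes: one ($E$) in which horizontal segments are created by merging the contributions of both adjacent strips into a single strip (hence the $m_n$-coupling via the third column, with the ``$2$'' appearing because $m_n=\max\{l_n,u_n\}$ bounds each of $l_n,u_n$), and another ($F$) in which the near-level traffic splits symmetrically across the two strips (hence the $\sigma_n=l_n+u_n$ coupling in the second column). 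The main work is to verify that \emph{no matter} how the signs are assigned interval-by-interval, the aggregated counts over the strips $J_{n,k_n-1}\cup J_{n,k_n}$ are dominated entrywise by $E{\bf x}_n$ or by $F{\bf x}_n$; the reindexing from $(c_n,l_n,u_n)$ to $(c_n,\sigma_n,m_n)$ is precisely what makes the two regimes expressible by fixed nonnegative matrices.

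The step I expect to be the main obstacle is controlling the $m_n$ versus $\sigma_n$ bookkeeping when new horizontal or slope-$\pm 2$ segments at level $2n+2$ arise from sources sitting in \emph{both} of the two adjacent strips at level $2n$, since one must argue that using $m_n=\max\{l_n,u_n\}$ in the third row/column never undercounts. Concretely, I would have to check that whenever the refinement of a slope-$\pm 2$ segment deposits a horizontal or near-level piece into the critical strip, the number of such deposits is bounded by $m_n$ (not merely by $l_n$ or $u_n$ individually), and dually that the $\sigma_n$-based bound of $F$ covers the complementary case where contributions to $l_{n+1}$ and $u_{n+1}$ come from distinct parents. Establishing that one of these two bounds always applies — i.e.\ that the case split is exhaustive — is the heart of the lemma, and I would handle it by examining, for each slope value and each relative position of $y$ within $J_{n,k_n}$, which of the two near-level strips at stage $n+1$ receives the newly created horizontal segment, thereby showing the outcome is always dominated by $E$ or by $F$.
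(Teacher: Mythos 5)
Your plan misses the central organizing idea of the paper's proof: the dichotomy between $E$ and $F$ is \emph{not} driven by the sign choices ${\bf w}$, but by how the fixed ordinate $y$ sits inside the refined grid, i.e.\ by the single integer $k_{n+1}-4k_n\in\{-2,-1,0,1,2\}$, which depends only on $y$ and $n$ and is therefore the \emph{same for every interval} $I_{2n,j}$ at stage $n$. By symmetry this reduces to three cases. If $k_{n+1}=4k_n$ (baseline unchanged), each horizontal segment at height $y_n$ spawns at most two horizontal segments at height $y_{n+1}=y_n$ and every nonhorizontal segment in either strip spawns at most one, giving $c_{n+1}\leq 2c_n+\sigma_n$, $\sigma_{n+1}\leq 2c_n+\sigma_n$, $m_{n+1}\leq 2c_n+m_n$, i.e.\ ${\bf x}_{n+1}\leq F{\bf x}_n$. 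If $k_{n+1}=4k_n\pm 1$ (baseline shifts by half a strip), only segments lying at least partly on one side of $y_n$ can contribute, whence $c_{n+1}\leq 2c_n+u_n\leq 2c_n+m_n$, $u_{n+1}\leq u_n$, $l_{n+1}\leq 2c_n+u_n$ (or the mirror images), i.e.\ ${\bf x}_{n+1}\leq E{\bf x}_n$. If $k_{n+1}=4k_n\pm 2$, then $c_{n+1}=0$ and ${\bf x}_{n+1}\leq E{\bf x}_n$ trivially. Within each case the entrywise bounds hold \emph{uniformly} over all sign configurations; no enumeration of triples ${\bf w}$ is needed or performed.

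This matters because your proposed route would actually fail at the step you yourself flag as the heart of the lemma. If the applicable regime were decided by the signs, it could differ from one interval $I_{2n,j}$ to the next, and the counts aggregated over the two strips would then be dominated only by a mixture of $E$-type and $F$-type bounds — not by a single matrix per step, which is what the lemma asserts and what the joint-spectral-radius argument downstream requires. It is precisely the independence of the case split from all the $\omega$'s that rescues the statement; your closing remark about ``the relative position of $y$ within $J_{n,k_n}$'' is the germ of the correct split, but in your plan it is relegated to a secondary check inside a sign-based enumeration rather than being the sole case distinction. You also have the two regimes interpreted backwards: the $\sigma_n$-coupling in $F$ arises in the \emph{centered} case, where both strips feed the new horizontal count, while the $m_n$-coupling in $E$ arises in the \emph{shifted} case, where only one side can contribute and the maximum merely unifies the two symmetric subcases $k_{n+1}=4k_n\pm 1$. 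A smaller inaccuracy: $l_n=N_{n,k_n-1}^1$ and $u_n=N_{n,k_n}^1$ count \emph{all} nonhorizontal segments meeting the open strips (Lemma \ref{lem:even} guarantees nonhorizontality), not only the slope-$\pm 2$ ones, so the counting bounds must apply to steep segments as well — which the paper's monotonicity arguments do, but your bookkeeping as described does not.
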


\begin{proof}
The choice of $k_n$ implies that $k_{n+1}\in\{4k_n-2,\dots,4k_n+2\}$. By symmetry, the argument for the case $k_{n+1}=4k_n-i$ is the same as that for $k_{n+1}=4k_n+i$ ($i=1,2$), so we have three essentially different cases to consider. Put $y_n:=2k_n/4^n$, so $y_n$ is the center of the combined interval $J_{n,k_n-1}\cup J_{n,k_n}$.

\bigskip
{\em Case 1:} $k_{n+1}=4k_n$. In this case, $y_{n+1}=y_n$. Each horizontal line segment in the graph of $f_{2n}$ at height $y_n$ contributes at most two horizontal line segments at height $y_{n+1}$ in the graph of $f_{2n+2}$, and each nonhorizontal line segment (whether above or below height $y_n$) contributes at most one such horizontal line segment. Thus, $c_{n+1}\leq 2c_n+\sigma_n$. Further, each horizontal line segment in the graph of $f_{2n}$ at height $y_n$ spawns two nonhorizontal line segments (of slopes $2$ and $-2$) either above or below height $y_{n+1}$, while each nonhorizontal line segment generates exactly one such segment. Thus, $\sigma_{n+1}\leq 2c_n+\sigma_n$. Finally, $u_{n+1}\leq 2c_n+u_n$ and $l_{n+1}\leq 2c_n+l_n$, so $m_{n+1}\leq 2c_n+m_n$. It follows that ${\bf x}_{n+1}\leq F{\bf x}_n$.

\bigskip
{\em Case 2:} $k_{n+1}=4k_n+1$. In this case, $y_{n+1}=y_n+\frac12{(\frac14)}^n$. Each horizontal line segment in the graph of $f_{2n}$ at height $y_n$ contributes at most two horizontal line segments to the graph of $f_{2n+2}$ at height $y_{n+1}$, while each non-horizontal line segment contributes at most one. Nonhorizontal line segments which lie entirely below $y_n$ contribute none. Thus, $c_{n+1}\leq 2c_n+u_n\leq 2c_n+m_n$. On the other hand, a horizontal line segment at height $y_n$ in the graph of $f_{2n}$ contributes at most two nonhorizontal line segments (of slope $2$ and $-2$, respectively) to the graph of $f_{2n+2}$ between heights $y_n$ and $y_{n+1}$, and none above height $y_{n+1}$. Finally, each nonhorizontal line segment in the graph of $f_{2n}$ that lies at least partially above $y_n$ contributes at most one nonhorizontal line segment to the graph of $f_{2n+2}$ in the strip $[0,1]\times J_{n+1,k_{n+1}}$, and at most one in the strip $[0,1]\times J_{n+1,k_{n+1}-1}$. Hence, $l_{n+1}\leq 2c_n+u_n$ and $u_{n+1}\leq u_n$. It follows that $\sigma_{n+1}\leq 2c_n+2u_n\leq 2c_n+2m_n$ and $m_{n+1}\leq 2c_n+m_n$. Thus, ${\bf x}_{n+1}\leq E{\bf x}_n$.

\bigskip
{\em Case 3:} $k_{n+1}=4k_n+2$. This is the simplest case, because here $c_{n+1}=0$, $l_{n+1}\leq u_n$ and $u_{n+1}\leq u_n$. Hence, $\sigma_{n+1}\leq 2u_n\leq 2m_n$ and $m_{n+1}\leq u_n\leq m_n$. It follows that ${\bf x}_{n+1}\leq E{\bf x}_n$.
\end{proof}

As a side note, we can observe from Case 3 above that if $y$ is such that $k_{n+1}=4k_n\pm 2$ for all but finitely many $n$, then $L_f(y)$ is finite. This will be the case if $y/2$ has quarternary expansion ending in all $2$'s, i.e. if $y=\pm(k+1/3)4^{-m}$ for some $k\in\ZZ_+$ and $m\in\ZZ_+$. In particular, if $y=\pm\, 1/3$, then $L_f(y)$ is either empty or consists of exactly two points, for each $f\in\TT_v$.

It transpires from Lemma \ref{lem:E-and-F} that we must compute the {\em joint spectral radius} of $E$ and $F$. For a set $\Sigma$ of $d\times d$ matrices, the joint spectral radius $\rho(\Sigma)$ is defined by
\begin{equation*}
\rho(\Sigma):=\limsup_{n\to\infty} \sup\{\|T_1 T_2\cdots T_n\|^{1/n}: T_i\in\Sigma\ \mbox{for all $i$}\}.
\end{equation*}
The value of $\rho(\Sigma)$ is independent of the choice of matrix norm $\|.\|$; see Rota and Strang \cite{Rota-Strang}. In general the joint spectral radius is difficult to compute exactly, even for sets of just two matrices. While much work has been done for the $2\times 2$ case (see M\"ossner \cite{Mossner} and the references therein), there are few known examples for larger matrices. For our set $\Sigma=\{E,F\}$, however, it is possible to show through a sequence of steps that $\rho(\Sigma)=\sqrt{\alpha}$, and this will give the upper bound in Theorem \ref{thm:flexible-upper-bound}.

\begin{proposition} \label{prop:joint-spectral-radius}
The joint spectral radius of $E$ and $F$ is given by
\begin{equation*}
\rho(\{E,F\})=\sqrt{\rho(FE)}=\sqrt{\alpha},
\end{equation*}
where for any square matrix $M$, $\rho(M)$ denotes the spectral radius of $M$.
\end{proposition}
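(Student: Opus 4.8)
The plan is to prove the two inequalities $\rho(\{E,F\})\ge\sqrt\alpha$ and $\rho(\{E,F\})\le\sqrt\alpha$ separately, with essentially all of the difficulty in the upper bound. For the lower bound I would use only the elementary property that $\rho(\Sigma)\ge\rho(T_1\cdots T_k)^{1/k}$ for any finite product of members of $\Sigma$: applied to the length-$2$ product $FE$, together with Gelfand's formula $\|(FE)^m\|^{1/m}\to\rho(FE)$, this gives $\rho(\{E,F\})\ge\sqrt{\rho(FE)}$ immediately. Since the second column of $FE$ vanishes, its characteristic polynomial is $-\lambda(\lambda^2-9\lambda-6)$, so $\rho(FE)=(9+\sqrt{105})/2=\alpha$ and the lower bound is $\sqrt\alpha$.

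For the upper bound I would first exploit a structural feature of the two matrices: $E$ has equal first and third rows and $F$ has equal first and second rows, so $\operatorname{Im}(E)$ and $\operatorname{Im}(F)$ are the two-dimensional planes $\{u_1=u_3\}$ and $\{u_1=u_2\}$. Hence every product of length at least one factors through these planes, and expressing each matrix in adapted bases reduces the dynamics to the composition, along paths in a two-node graph, of the four $2\times2$ matrices $\hat X_Y$ (``apply $X$ to a vector lying in $\operatorname{Im}(Y)$''),
\begin{equation*}
\hat E_E=\begin{bmatrix}3&0\\4&0\end{bmatrix},\quad \hat E_F=\begin{bmatrix}2&1\\2&2\end{bmatrix},\quad \hat F_E=\begin{bmatrix}2&1\\3&0\end{bmatrix},\quad \hat F_F=\begin{bmatrix}3&0\\2&1\end{bmatrix}.
\end{equation*}
Because changing to a fixed finite-dimensional coordinate system alters norms only by bounded factors, the exponential growth rate is unchanged, so $\rho(\{E,F\})$ equals the (path-constrained) joint spectral radius of this reduced system; and one checks that the alternating product $\hat F_E\hat E_F=\begin{bmatrix}6&4\\6&3\end{bmatrix}$ — which is exactly the matrix $\hat M$ of the extremal Moran construction of Subsection~\ref{subsec:extremal-function} — again has spectral radius $\alpha$.

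To bound the reduced system I would pass to the projective coordinate $t=v/u$ on the positive quadrant, which the nonnegative matrices preserve. A short computation shows that the two ``cross'' maps $\hat E_F,\hat F_E$ multiply the first coordinate by the factor $2+t$, while the two ``self-loops'' $\hat E_E,\hat F_F$ multiply it by $3$, and that $t$ is trapped in a bounded invariant interval on which the cross-dynamics has a single attracting period-two orbit — the leading eigendirection of $\begin{bmatrix}6&4\\6&3\end{bmatrix}$ — whose per-period product of multipliers is exactly $\alpha$. Since $3<\sqrt\alpha$, inserting self-loops can only lower the asymptotic average, so no infinite word sustains an average log-multiplier exceeding $\tfrac12\log\alpha$, giving $\rho\le\sqrt\alpha$. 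To make this rigorous (rather than asymptotic) I would certify it by a telescoping estimate: a bounded function $g(t)$ satisfying $\log(\text{multiplier})\le\tfrac12\log\alpha+g(t)-g(t')$ for every admissible transition $t\mapsto t'$, since then $\|T_1\cdots T_n\|\lesssim e^{g(t_0)-g(t_n)}\,\alpha^{n/2}$ directly. Such a $g$ is nothing but (the logarithmic gauge of) an extremal polytope norm for the reduced system, whose tight vertices sit on the period-two orbit.

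The main obstacle is precisely this upper bound, and within it the construction of the certifying norm. I expect — and a quick check confirms — that the obvious candidates fail: neither a weighted $\ell^\infty$ nor a weighted $\ell^1$ norm can bound both $E$ and $F$ (indeed not even all four length-two products) by $\sqrt\alpha$, because the rank-one map $\hat E_E$ forces the $v$-direction to be weighted heavily while the cross maps forbid this, an irreconcilable tension. One is therefore driven to the projective analysis above, and the reason the value comes out to the clean $\sqrt\alpha$ is that the relations pinning down the admissible weights collapse, each time, to exactly the characteristic equation $\alpha^2-9\alpha-6=0$ of $\alpha$ — the ``lucky coincidence'' alluded to before the statement.
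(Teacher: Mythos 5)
Your lower bound is complete and correct, and your two-plane reduction is both sound and genuinely different from the paper: $E$ and $F$ do have the equal rows you claim, your four reduced matrices check out, and $\hat F_E\hat E_F=\left[\begin{smallmatrix}6&4\\6&3\end{smallmatrix}\right]$ indeed has spectral radius $\alpha$ (the paper never reduces dimension; it works with the $3\times 3$ products directly via the norm $\mathbf{1}M\mathbf{1}^t$). But the upper bound --- which you correctly identify as the entire difficulty --- is not proved, and the one argument you actually give in its place is invalid. The claim ``since $3<\sqrt\alpha$, inserting self-loops can only lower the asymptotic average'' fails as a per-step averaging argument precisely because the self-loops reset the projective coordinate: $\hat E_E$ sends \emph{every} $t$ to $4/3$, after which the next cross multiplier is $2+4/3=10/3>\sqrt\alpha\approx 3.102$, and on the invariant interval the cross multiplier $2+t$ ranges up to $7/2$. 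So individual steps and even short blocks can exceed $\sqrt\alpha$, and the attracting period-two orbit of the pure cross dynamics controls only the alternating word, not words mixing loops and crossings. Controlling those mixed words is exactly the content of the paper's replacement lemmas ($E^2\mapsto EF$, $F^3E\mapsto (FE)^2$, and $F^2E(FE)^kF^2E\mapsto (FE)^{k+3}$, the last resting on the identity $F^2E=3FE+D$ together with the Cayley--Hamilton relation $G^3=9G^2+6G$ for $G=FE$); your reduction does not by itself replace that combinatorics.

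You do name the right repair --- a telescoping function $g$, i.e.\ an extremal (Barabanov-type) multinorm for the path-constrained system --- but you never construct it, and its existence is not free. Your own observation that single weighted norms fail is correct and quantifiable: with linear functionals $\phi_E=(a_E,b_E)$, $\phi_F=(a_F,b_F)$ per node, the self-loop condition $\hat E_E^t\phi_E\le\sqrt\alpha\,\phi_E$ forces $b_E\le(\sqrt\alpha-3)a_E/4\approx 0.026\,a_E$, and then chaining $\hat F_E^t\phi_F\le\sqrt\alpha\,\phi_E$ with $\hat E_F^t\phi_E\le\sqrt\alpha\,\phi_F$ gives $2a_E\le\sqrt\alpha\,a_F\le\alpha\,b_E\lesssim 0.25\,a_E$, a contradiction. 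Hence $g$ must be genuinely piecewise, with transition inequalities to be verified over a continuum of $t$-values; your proposal asserts its existence (``tight vertices sit on the period-two orbit'') without exhibiting a single piece or verifying a single inequality, and even the abstract existence of an extremal norm for a constrained family requires a non-defectiveness hypothesis you do not establish. As written, the upper bound is a plausible and attractive program --- arguably cleaner than the paper's if completed --- but it is a program, not a proof: to close it you must either write down $g$ explicitly and check the finitely many monotone inequalities in $t$, or import word-reduction arguments of the kind the paper uses.
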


\begin{proof}
We use the matrix norm $\|T\|=\sum_{i,j}|t_{ij}|$, where $T=[t_{ij}]$. If $T$ is a $3\times 3$ matrix with nonnegative entries, we have the representation $\|T\|={\bf 1}T{\bf 1}^t$, where ${\bf 1}:=(1,1,1)$. Denote by $\PP_k$ the set of all products $T_1\cdots T_k$ where $T_i\in\{E,F\}$ for $i=1,\dots,k$. Let $\PP_0:=\{I\}$, and put $\PP:=\bigcup_{k=0}^\infty \PP_k$.
A straightforward induction argument yields the following implications for $M\in\PP$:
\begin{gather}
(v_1,v_2,v_3)={\bf 1}M \qquad\Rightarrow\qquad v_1\geq\max\{v_2,v_3\},
\label{eq:left-multiply-by-1}\\
(w_1,w_2,w_3)^t=M{\bf 1}^t \qquad\Rightarrow\qquad \min\{w_1,w_2\}\geq w_3.
\label{eq:right-multiply-by-1}
\end{gather}
Since
\begin{equation*}
E^2=\begin{bmatrix}6 & 0 & 3\\8 & 0 & 4\\6 & 0 & 3\end{bmatrix}, \qquad
EF=\begin{bmatrix}6 & 2 & 1\\8 & 2 & 2\\6 & 2 & 1\end{bmatrix},
\end{equation*}
we see by \eqref{eq:right-multiply-by-1} that $E^2 M{\bf 1}^t\leq EFM{\bf 1}^t$ for each $M\in\PP$. Thus, if $E^2$ occurs anywhere in the product $T_1 T_2\cdots T_n$, we can replace it with $EF$ without decreasing the norm of the product. Hence it suffices to consider products of the form
\begin{equation}
F^{j_1}E F^{j_2}E\cdots F^{j_k}E, \qquad k\in\NN, \quad j_1\geq
1,\dots,j_k\geq 1.
\label{eq:alternating-products}
\end{equation}

\begin{lemma} \label{lem:sparse-matrix}
Let $S$ be a matrix of the form
\begin{equation*}
S=\begin{bmatrix}a & 0 & 0\\a & 0 & 0\\0 & 0 & -b\end{bmatrix}, \qquad a\geq b\geq 0.
\end{equation*}
Then for all $M_1,M_2\in\PP$,
\begin{equation*}
{\bf 1}M_1 SM_2{\bf 1}^t\geq 0.
\end{equation*}
\end{lemma}

\begin{proof}
Let ${\bf v}=(v_1,v_2,v_3)={\bf 1}M_1$, and ${\bf w}=(w_1,w_2,w_3)^t=M_2{\bf 1}^t$. Then \eqref{eq:left-multiply-by-1} and \eqref{eq:right-multiply-by-1} imply that $v_1\geq v_3$ and $w_1\geq w_3$. Since $v_i,w_i\geq 0$, it follows that
\begin{equation*}
{\bf 1}M_1 SM_2{\bf 1}^t={\bf v}S{\bf w}=av_1 w_1+av_2 w_1-bv_3 w_3\geq av_1 w_1-bv_3 w_3\geq 0,
\end{equation*}
as desired.
\end{proof}

We now calculate further:
\begin{gather*}
FE=\begin{bmatrix}6 & 0 & 4\\6 & 0 & 4\\6 & 0 & 3\end{bmatrix}, \qquad
F^2 E=\begin{bmatrix}18 & 0 & 12\\18 & 0 & 12\\18 & 0 & 11\end{bmatrix}, \qquad
F^3 E=\begin{bmatrix}54 & 0 & 36\\54 & 0 & 36\\54 & 0 & 35\end{bmatrix},\\
(FE)^2=\begin{bmatrix}60 & 0 & 36\\60 & 0 & 36\\54 & 0 & 33\end{bmatrix}, \qquad
(FE)^2-F^3 E=\begin{bmatrix}6 & 0 & 0\\6 & 0 & 0\\0 & 0 & -2\end{bmatrix}.
\end{gather*}
Applying Lemma \ref{lem:sparse-matrix} to $S=(FE)^2-F^3 E$ yields
\begin{equation*}
{\bf 1}M_1(FE)^2 M_2{\bf 1}^t\geq {\bf 1}M_1(F^3 E)M_2{\bf 1}^t
\end{equation*}
for all $M_1,M_2\in\PP$, and hence, if $F^3 E$ occurs in $T_1 T_2\cdots T_n$, we can replace it with $(FE)^2$ without decreasing the norm of the product. Repeating this as many times as needed we can thus reduce the problem to products of the form \eqref{eq:alternating-products} with $j_l\in\{1,2\}$ for $l=1,\dots,k$.

It remains to eliminate factors $F^2 E$. Without loss of generality we may assume that the product $T_1 T_2 \cdots T_n$ contains an even number of such factors. (Otherwise, we can left multiply $T_1 T_2 \cdots T_n$ by $F^2 E$ to create an even number of occurrences; the extra factor has no impact in the limit as $n\to\infty$.) Note that two consecutive occurrences of $F^2 E$ are separated by some power of $FE$.

\begin{lemma}
For each $k\geq 0$, the matrix
\begin{equation*}
S_k:=(FE)^{k+3}-F^2 E(FE)^k F^2 E
\end{equation*}
satisfies the hypothesis of Lemma \ref{lem:sparse-matrix}.
\end{lemma}

\begin{proof}
Let $G=FE$. The characteristic polynomial of $G$ is $\lambda^3-9\lambda^2-6\lambda$, so $G$ has three distinct real eigenvalues and $\rho(G)=\alpha$. Furthermore, $G$ satisfies its own characteristic equation: 
\begin{equation}
G^3-9G^2-6G=0. 
\label{eq:G-equation}
\end{equation}
The key to showing that $S_k$ has the required form is the apparent coincidence that $F^2 E$ is ``almost" equal to $3FE$. Precisely, $F^2 E=3FE+D$, where $D=[d_{ij}]$ is the matrix whose only nonzero entry is $d_{33}=2$. Multiplying \eqref{eq:G-equation} by $G^k$ and putting the results together we obtain, after some elementary algebra,
\begin{equation}
S_k=6G^{k+1}-3G^{k+1}D-3DG^{k+1}-DG^k D.
\label{eq:Sk-expanded}
\end{equation}
Writing
\begin{equation*}
G^k=\begin{bmatrix}\alpha_k & 0 & \beta_k\\ \alpha_k & 0 & \beta_k\\ \gamma_k & 0 & \delta_k\end{bmatrix},
\end{equation*}
\eqref{eq:Sk-expanded} becomes
\begin{equation*}
S_k=\begin{bmatrix}6\alpha_{k+1} & 0 & 0\\6\alpha_{k+1} & 0 & 0\\0 & 0 & -6\delta_{k+1}-4\delta_k\end{bmatrix}.
\end{equation*}
It remains to verify that $6\alpha_{k+1}\geq 6\delta_{k+1}+4\delta_k$. But this follows easily by induction since $\{\alpha_k\}$ and $\{\delta_k\}$ both satisfy the recursion $x_{k+1}=9x_k+6x_{k-1}$ for $k\geq 2$ in view of \eqref{eq:G-equation}, and the inequality clearly holds for $k=0,1,2$.
\end{proof}

In view of the last lemma, we can replace $F^2 E(FE)^k F^2 E$ with $(FE)^{k+3}$ without decreasing the norm of the product $T_1 T_2\cdots T_n$. Applying this repeatedly we eliminate all factors $F^2 E$ two at a time. Therefore, the extremal case is $T_1 T_2\cdots T_n=(FE)^{n/2}$ (assuming without loss of generality that $n$ is even), and this shows that $\rho(\{E,F\})=\sqrt{\rho(FE)}=\sqrt{\alpha}$, proving the Proposition.
\end{proof}

\bigskip

We can now complete the proof of Theorem \ref{thm:flexible-upper-bound}. By Lemma \ref{lem:E-and-F} and Proposition \ref{prop:joint-spectral-radius} (or rather, its proof), there is a constant $C$ such that $c_n+\sigma_n\leq C\alpha^{n/2}$ for all $n$. Using Lemma \ref{lem:tail-bound} it follows that for each $n$, the level set $L_f(y)$ can be covered by at most $C\alpha^{n/2}$ intervals of length $4^{-n}$. Hence,
\begin{equation*}
\dim_H L_f(y)\leq \lim_{n\to\infty}\frac{\log(C\alpha^{n/2})}{-\log(4^{-n})}=\frac{\log\alpha}{\log 16}=d_v^*,
\end{equation*}
giving the stated upper bound.

\subsection{Intersection with lines of integer slope} \label{subsec:lines}

We end this section with a proof of Corollary \ref{cor:integer-slope}. It uses the following simple lemma:

\begin{lemma} \label{lem:all-integer-slopes}
For any $f\in\TT_v$ and for any $m\in\ZZ$, the partial function $f_{|m|}$ has slope $m$ on exactly one interval $I_{|m|,j}$ with $0\leq j<2^{|m|}$.
\end{lemma}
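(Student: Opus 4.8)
The plan is to analyze the slopes $s_{|m|,j}$ as $j$ ranges over $0,1,\dots,2^{|m|}-1$, and show that the value $m$ occurs exactly once among them. Write $N:=|m|$ for brevity. The starting point is the slope transition recursion \eqref{eq:slope-transition}, namely $s_{n+1,2j}=s_{n,j}+\omega_{n,j}$ and $s_{n+1,2j+1}=s_{n,j}-\omega_{n,j}$, together with the observation in Section \ref{sec:prelim} that $f_{n+1}^+$ differs from $f_n^+$ by exactly $\pm 1$ at each point. Since the slopes $s_{N,j}$ are exactly the values of $f_N^+$ on the dyadic intervals $I_{N,j}$, I would view $(s_{N,j})_{j=0}^{2^N-1}$ as the sequence of heights visited by a walk that takes $N$ steps of size $\pm 1$, indexed along the binary tree. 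The extreme value $s=N$ (and likewise $s=-N$) can only be reached by taking $N$ consecutive $+1$ steps (respectively $N$ consecutive $-1$ steps), so I expect it to occur exactly once.

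Concretely, I would argue by induction on $N$ using the following stronger statement: for each $n\ge 1$, the maximal slope $\max_j s_{n,j}$ equals $n$, it is attained at exactly one index $j$, and the same holds for the minimal slope $-n$. The base case $n=1$ is immediate since $s_{1,0}=\omega_{0,0}$ and $s_{1,1}=-\omega_{0,0}$ give the two values $\pm 1$, each once. For the inductive step, note from \eqref{eq:slope-transition} that every child slope $s_{n+1,\cdot}$ equals some parent slope $s_{n,j}$ shifted by $\pm 1$, so $\max_j s_{n+1,j}\le \max_j s_{n,j}+1=n+1$; and the unique parent attaining the value $n$ produces a child of slope $n+1$ (by choosing the branch that adds $\omega_{n,j}$ or subtracts it, whichever gives $+1$), so the maximum $n+1$ is attained. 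Uniqueness is the crux: a child can have slope $n+1$ only if its parent has slope $n$ (the only parent slope within distance $1$ of $n+1$), and that parent, being unique by the inductive hypothesis, has exactly two children whose slopes are $n+1$ and $n-1$; hence exactly one child attains $n+1$. The argument for $-(n+1)$ is symmetric.

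Specializing to $n=N=|m|$ gives that the value $m$ (whether $m=N$ or $m=-N$) is attained by exactly one $s_{N,j}$, which is precisely the assertion that $f_{|m|}$ has slope $m$ on exactly one interval $I_{|m|,j}$. For the case $m=0$ the statement is vacuous or handled separately, but since the corollary concerns integer slopes $m$ and $f_{|m|}$ has the $2^{|m|}$ slopes described above, the relevant extremal-slope argument applies whenever $m\ne 0$; I would note that for $m=0$ one takes $f_0\equiv 0$ trivially, or observe that the statement as used in proving Corollary \ref{cor:integer-slope} concerns the embedding step and only the extremal slope matters.

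The main obstacle I anticipate is establishing uniqueness cleanly rather than merely attainment. The attainment direction is a one-line consequence of the recursion, but the uniqueness requires the observation that the extremal slope at level $n+1$ can descend only from the extremal slope at level $n$, which in turn relies on the uniform step-size-one property $|s_{n+1,\cdot}-s_{n,\cdot}|=1$. I would be careful to phrase this so that it works uniformly over all sign choices $\omega_{n,j}\in\{-1,1\}$, since the lemma is asserted for arbitrary $f\in\TT_v$; the key point making this robust is that regardless of the sign $\omega_{n,j}$, the two children of a given parent always straddle the parent symmetrically as $s_{n,j}\pm 1$, so exactly one child moves toward the extreme.
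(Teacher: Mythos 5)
Your proof is correct and is essentially the paper's own argument: the paper likewise proves the lemma by induction on $m$ (treating $m<0$ by symmetry and $m=0$ trivially), using \eqref{eq:slope-transition} to show that the unique parent interval of slope $m$ spawns exactly one child of slope $m+1$, with uniqueness left as ``follows easily by induction.'' The only difference is that you spell out explicitly the uniqueness step the paper compresses—namely that since all slopes change by exactly $\pm 1$ at each level, a slope of $n+1$ can only descend from the unique parent of extremal slope $n$, whose two children straddle it as $n\pm 1$.
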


\begin{proof}
We prove the statement for $m\geq 0$; the case $m<0$ follows by symmetry.
The statement is obvious for $m=0$. Proceeding by induction, suppose $s_{m,j}=m$. Then by \eqref{eq:slope-transition}, $s_{m+1,2j}=m+1$ if $\omega_{m,j}=1$, while $s_{m+1,2j+1}=m+1$ if $\omega_{m,j}=-1$. The uniqueness of $j$ follows easily by induction as well.
\end{proof}

\begin{proof}[Proof of Corollary \ref{cor:integer-slope}]
For convenience we regard each $f\in\TT_v$ as a $1$-periodic function defined on all of $\RR$. However, we keep the convention that $\graph(f)=\{(x,f(x)):0\leq x\leq 1\}$.
Fix $m$ and $b$, and put $\ell:=\ell_{m,b}$. We first show that
\begin{equation}
\dim_H (\graph(f)\cap\ell)\leq \begin{cases}
1/2, & \mbox{if $f\in\TT_c$},\\
d_v^*, & \mbox{if $f\in\TT_v$}.
\end{cases}
\label{eq:general-line-upper-bound}
\end{equation}
Assume first that $m\geq 0$. Let
\begin{equation*}
g(x):=-\sum_{k=0}^{m-1}\frac{1}{2^k}\phi(2^k x)+\frac{1}{2^m}f(2^m x), \qquad 0\leq x\leq 1,
\end{equation*}
and define the linear mapping $\Psi:\RR^2\to\RR^2$ by $\Psi(x,y):=(2^{-m}x,-m2^{-m}x+2^{-m}y)$. Then $\Psi$ maps $\graph(f)$ onto $\graph(g\rest_{[0,2^{-m}]})$, and it maps $\ell$ onto the horizontal line $\ell': y=b/2^m$. Since $\Psi$ is linear and invertible it is bi-Lipschitz, and hence
\begin{equation*}
\dim_H(\graph(f)\cap\ell)=\dim_H\big(\graph(g\rest_{[0,2^{-m}]})\cap\ell'\big) \leq \dim_H L_g(b/2^m).
\end{equation*}
Observe that $g\in\TT_v$, and if $f\in\TT_c$ then $g\in\TT_c$. Thus, \eqref{eq:general-line-upper-bound} follows for the case $m\geq 0$ from the bounds of Theorems \ref{thm:rigid-upper-bound} and \ref{thm:flexible-upper-bound}. And for $m<0$, it follows by applying the above argument to $-f$.

We next show that the upper bounds are attained for each $m\in\ZZ$. We do this for $m\geq 0$; the case $m<0$ follows by symmetry. First, let $f\in\TT_c$. By Lemma \ref{lem:all-integer-slopes} there is $0\leq j<2^{m}$ such that $f_{m}$ has slope $m$ on $I_{m,j}$. Let $x_0=j/2^m$ be the left endpoint of $I_{m,j}$. The function $h(x):=2^m\big(f(2^{-m}x+x_0)-f_m(2^{-m}x+x_0)\big)$ is in $\TT_c$, so its graph intersects some horizontal line $\ell$ in a set of dimension $1/2$. Let $\Phi:\RR^2\to\RR^2$ be the affine mapping that maps $\graph(h)$ onto $\graph(f\rest_{I_{m,j}})$. Then $\Phi$ maps $\ell$ onto a line $\ell'$ with slope $m$, and since $\Phi$ is bi-Lipschitz, it follows that
\begin{equation*}
\dim_H (\graph(f)\cap\ell')\geq\dim_H(\graph(f\rest_{I_{m,j}})\cap \ell')=\dim_H(\graph(h)\cap\ell)=1/2.
\end{equation*}

Finally, we build a function $g\in\TT_v$ which attains the bound in \eqref{eq:general-line-upper-bound}. Assume again that $m\geq 0$. Let $h\in\TT_v$ and $y_0\in\RR$ be such that $\dim_H L_h(y_0)=d_v^*$. Define $g\in\TT_v$ by
\begin{equation*}
g(x):=\sum_{k=0}^{m-1}\frac{1}{2^k}\phi(2^k x)+\frac{1}{2^m}h(2^m x), \qquad 0\leq x\leq 1.
\end{equation*}
Let $\Phi$ be the affine mapping that maps 
$\graph(h)$ onto $\graph(g\rest_{[0,2^{-m}]})$. Then $\Phi$ maps the line $y=y_0$ onto a line $\ell'$ of slope $m$, so that
\begin{equation*}
\dim_H (\graph(g)\cap\ell')\geq\dim_H\big(\graph(g\rest_{[0,2^{-m}]})\cap \ell'\big)=\dim_H L_h(y_0)=d_v^*.
\end{equation*}
This completes the proof.
\end{proof}

\section{The random case} \label{sec:random-case}

In this section we prove the results for the random case.

\subsection{Dimension of the zero set}

We first state a useful fact, which will be referred to as the {\em zero criterion}.

\bigskip
{\bf Zero criterion:} If $f_n$ does not take the value $0$ anywhere on an interval $I_{n,j}$, then $f$ itself will not vanish anywhere in $I_{n,j}$.

\bigskip
The zero criterion holds since $f_n>0$ on $I_{n,j}$ implies $f_n\geq 2^{-n}$ on $I_{n,j}$, and $|f-f_n|<2^{-n}$ by Lemma \ref{lem:tail-bound}. (A similar argument applies of course when $f_n<0$ on $I_{n,j}$.) The zero criterion implies that for each level $n$, we need only consider intervals $I_{n,j}$ on which the graph of $f_n$ has at least one point on the $x$-axis.

\begin{proof}[Proof of Theorem \ref{thm:zero-set-flexible}]
Assume $f$ is randomly generated according to Model 2 with $p=1/2$.
We calculate the almost-sure dimension of $L_f(0)\cap[0,1/2]$. By symmetry, this will equal the almost-sure dimension of $L_f(0)$. The idea is to represent $L_f(0)\cap[0,1/2]$ as the attractor of a Mauldin-Williams random recursive construction; see \cite{MW}. 
When $p=1/2$, the slope of $f_n$ on $I_{n,0}=[0,2^{-n}]$ follows a symmetric simple random walk, and as such it returns to zero infinitely often with probability one. This must happen at even times. If it happens for the first time at time $2n$, then $f_{2n}\equiv 0$ on $I_{2n,0}$, the slope of $f_{2n}$ on the adjacent interval $I_{2n,1}$ must be $\pm 2$, and by the zero criterion, $f$ cannot take the value $0$ anywhere else in $[0,1/2]$. Since Hausdorff dimension is independent of scale and orientation, we may assume without loss of generality that $n=1$, and the slope of $f_2$ on $I_{2,1}$ is $2$. We put $J_\emptyset:=[0,1/2]$, and call the graph of $f_2$ on $[0,1/2]$ our ``basic figure".

From here, the graph of $f$ will evolve independently on the intervals $[0,1/4]$ and $[1/4,1/2]$. More precisely, the restrictions of $f-f_2$ to these intervals are independent, and similarly, the restrictions of $f-f_3$ to the intervals $[0,1/8]$ and $[1/8,1/4]$ are independent. Thus, after random waiting times $T_1,T_2$ and $T_3$, independent of each other, the basic figure will reappear at smaller scale (and possibly reflected in the $x$-axis, reflected left-to-right, or both) just to the right of $0$, just to the left of $1/4$ and just to the right of $1/4$, respectively.
Here $T_1$ and $T_2$ have the distribution of $1+\tau_1$, and $T_3$ has the distribution of $\tau_2$, where for $m\in\NN$, $\tau_m$ is the hitting time of the random walk of level $m$. For $i=1,2,3$, let $J_i$ denote the projection of the $i$th copy of the basic figure onto the $x$-axis, so $|J_i|/|J_\emptyset|=2^{-T_i}$. We note that outside $J_1\cup J_2\cup J_3$, $f$ cannot take the value $0$ in view of the zero criterion.

This process continues. Each interval $J_i$ generates, with probability one, three random nonoverlapping subintervals $J_{i1}, J_{i2}$ and $J_{i3}$ above (or below) which the basic figure reappears for the second time in the construction of $f$, etc. This way, we obtain a ternary tree $\{J_\sigma: \sigma\in\Sigma^*\}$ of intervals, where $\Sigma:=\{1,2,3\}$ and $\Sigma^*:=\bigcup_{n=0}^\infty \Sigma^n$. The contraction ratios $R_{\sigma*i}:=|J_{\sigma*i}|/|J_\sigma|$, $\sigma\in\Sigma^*$, $i=1,2,3$ are all independent. If $i\in\{1,2\}$, $R_{\sigma*i}$ has the distribution of $2^{-(1+\tau_1)}$, while $R_{\sigma*3}$ has the distribution of $2^{-\tau_2}$.
Let $K:=\bigcap_{n=1}^\infty \bigcup_{\sigma\in \Sigma^n}J_\sigma$ be the limit set. A by now familiar argument shows that $K=L_f(0)\cap[0,1/2]$. It follows from Theorem 1.1 of Mauldin and Williams \cite{MW} that with probability one, $\dim_H K$ is the unique number $s$ such that $\rE(R_1^s+R_2^s+R_3^s)=1$, in other words, the unique $s$ such that
\begin{equation}
2\rE\left(2^{-s(1+\tau_1)}\right)+\sE\left(2^{-s\tau_2}\right)=1.
\label{eq:random-Moran-equation}
\end{equation}
Now let $\psi_i(x):=\sE(x^{\tau_i})$ denote the probability generating function of $\tau_i$. From standard random walk theory, we have $\psi_2(x)=(\psi_1(x))^2$, and $\psi_1(x)=(1-\sqrt{1-x^2})/x$. Putting $r=2^{-s}$, \eqref{eq:random-Moran-equation} therefore becomes
\begin{equation*}
2r\psi_1(r)+(\psi_1(r))^2=1.
\end{equation*}
This equation has only one positive solution, given by $r^2=(\sqrt{5}-1)/2$. Thus,
\begin{equation*}
s=\frac{-\log r}{\log 2}=\frac{\log(r^{-2})}{\log 4}=\frac{\log\big((1+\sqrt{5})/2\big)}{\log 4}=d_0,
\end{equation*}
and the proof is complete.
\end{proof}

\begin{proof}[Proof of Proposition \ref{prop:Gray-Takagi-zero-set}]
The Gray Takagi function, defined by \eqref{eq:Gray-Takagi}, satisfies the functional equation
\begin{equation}
f(x)=\begin{cases}
x+\frac12 f(2x), & 0\leq x\leq 1/2\\
1-x-\frac12 f(2-2x), & 1/2\leq x\leq 1.
\end{cases}
\label{eq:Gray-Takagi-FE}
\end{equation}
Let $x_m:=1-\sum_{i=1}^m 4^{-(2i-1)}$ for $m\in\ZZ_+$, and let $x^*:=\lim_{m\to\infty}x_m$. Applying \eqref{eq:Gray-Takagi-FE} repeatedly it may be seen that for each $m\in\NN$,
\begin{equation}
x\in[x_m,x_{m-1}] \quad\Longrightarrow \quad f(x)=-4^{-(2m-1)}f\left(4^{2m-1}(x_{m-1}-x)\right). 
\label{eq:Gray-self-similar}
\end{equation}
(The somewhat cumbersome algebraic details are omitted here, but \eqref{eq:Gray-self-similar} is best understood graphically.) Furthermore, $f(x)>0$ for $0<x<x^*$. As a result, $L_f(0)$ consists of $0$, $x^*$, and an infinite sequence of nonoverlapping similar copies of $L_f(0)$ itself, the $m$th copy being scaled by $x_{m-1}-x_m=4^{-(2m-1)}$ and reflected left-to-right. Thus, $\dim_H L_f(0)$ is the unique solution $s$ of the Moran equation
\begin{equation*}
\sum_{m=1}^\infty 4^{-(2m-1)s}=1.
\end{equation*}
A routine calculation gives $s=d_0$.
\end{proof}

In the setting of Model 1, the theorem of Mauldin and Williams is not applicable because the random contraction vectors $(R_{\sigma*1},R_{\sigma*2},R_{\sigma*3})$, $\sigma\in\Sigma^*$ are neither independent nor identically distributed. We must therefore work considerably harder just to obtain the inequalities of Theorem \ref{thm:zero-set-rigid}.

\begin{proof}[Proof of Theorem \ref{thm:zero-set-rigid}(i)]
Assume without loss of generality that $p\geq 1/2$. Define random index sets
\begin{align}
\begin{split}
\Gamma_n&:=\{j: 0\leq j<2^{2n}, f_{2n}(x)=0\ \mbox{for some}\ x\in I_{2n,j}\},\\
\Gamma_n^+&:=\{j\in\Gamma_n: f_{2n}\geq 0\ \mbox{on $I_{2n,j}$ and $s_{2n,j}\neq 0$}\},\\
\Gamma_n^-&:=\{j\in\Gamma_n: f_{2n}\leq 0\ \mbox{on $I_{2n,j}$ and $s_{2n,j}\neq 0$}\}.
\end{split}
\label{eq:random-index-sets}
\end{align}
Let $K_n:=\bigcup_{j\in\Gamma_n}I_{2n,j}$ and $K:=\bigcap_{n=0}^\infty K_n$. Then $K=L_f(0)$ by the zero criterion. Let $\GG_n$ denote the restriction of the graph of $f_{2n}$ to $K_n$, and for an interval $I\subset [0,1]$, let $\GG_n\rest_I$ denote the further restriction of $\GG_n$ to $K_n\cap I$. Suppose $\Gamma_n$ contains three successive integers $j, j+1$ and $j+2$, and let $J:=I_{2n,j}\cup I_{2n,j+1}\cup I_{2n,j+2}$. If the slopes of $f_{2n}$ on the intervals $I_{2n,j}, I_{2n,j+1}$ and $I_{2n,j+2}$ are $2,0,2$ or $-2,0,-2$, respectively, we call $\GG_n\rest_J$ a {\em $Z$-shape}. If instead the slopes are $-2,0,2$, we call $\GG_n\rest_J$ a {\em cup-shape}. Let $N_n$ denote the total number of $Z$-shapes contained in $\GG_n$. We shall show that, given that $N_{n_0}>0$ for some $n_0$, $N_n$ grows at an exponential rate with probability one.

Assume $N_{n_0}>0$. Figure \ref{fig:Z-shape-and-cup-shape} shows the four possible transitions from $f_{2n}$ to $f_{2n+2}$, depending on the signs $\omega_{2n}$ and $\omega_{2n+1}$. The top part of Figure \ref{fig:Z-shape-and-cup-shape} makes clear that if $\GG_n\rest_J$ is a $Z$-shape, then $\GG_{n+1}\rest_J$ contains again a $Z$-shape, at $1/4$ the scale of the original one. Hence, $N_{n+1}\geq N_n$. Moreover, if $(\omega_{2n},\omega_{2n+1})=(1,-1)$, then $\GG_{n+1}\rest_J$ contains in addition a cup-shape. As the bottom part of Figure \ref{fig:Z-shape-and-cup-shape} shows, a cup-shape in $\GG_{n+1}$ induces a pair of $Z$-shapes in $\GG_{n+2}$ if $\omega_{2n+2}=-1$. Thus, a $Z$-shape in $\GG_n$ induces three $Z$-shapes in $\GG_{n+2}$ if $(\omega_{2n},\omega_{2n+1},\omega_{2n+2})=(1,-1,-1)$. By symmetry, the same is true if $(\omega_{2n},\omega_{2n+1},\omega_{2n+2})=(-1,1,1)$. As a result, $\rP(N_{n+2}\geq 3N_n)\geq pq^2+qp^2=pq$. It now follows that for each $m\in\NN$, $N_{n_0+2m}\geq N_{n_0}\tilde{N}_{m}$, where $\tilde{N}_m$ is the product of $m$ independent random variables $Y_1,\dots,Y_m$ each having the distribution $\rP(Y_i=3)=pq=1-\sP(Y_i=1)$. By the strong law of large numbers,
\begin{equation*}
\frac{\log\tilde{N}_m}{m}\to \sE(\log Y_1)=pq\log 3 \qquad\mbox{a.s.},
\end{equation*}
and since $N_n$ is nondecreasing, this implies that given $N_{n_0}>0$,
\begin{equation*}
\liminf_{n\to\infty}\frac{\log N_n}{n}\geq \frac{pq\log 3}{2} \qquad\mbox{a.s.}
\end{equation*}

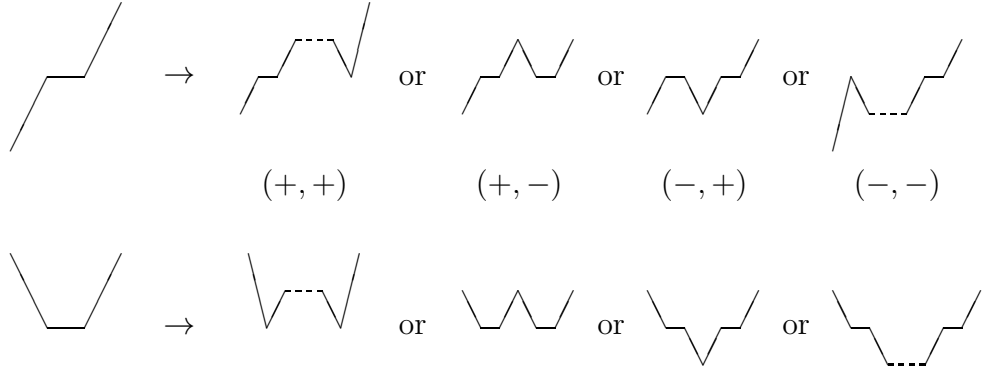
\begin{figure}
\begin{center}
\begin{picture}(360,140)(-10,0)
\put(4,110){\line(-1,-2){14}}
\put(4,110){\line(1,0){14}}
\put(18,110){\line(1,2){14}}
\put(48,112){\makebox(0,0)[tl]{$\to$}}
\put(84,110){\line(-1,-2){7}}
\put(84,110){\line(1,0){7}}
\put(91,110){\line(1,2){7}}
\multiput(98,124)(4,0){4}{\line(1,0){2}}
\put(119,110){\line(-1,2){7}}
\put(119,110){\line(1,4){7}}
\put(137,112){\makebox(0,0)[tl]{or}}
\put(168,110){\line(-1,-2){7}}
\put(168,110){\line(1,0){7}}
\put(175,110){\line(1,2){7}}
\put(189,110){\line(-1,2){7}}
\put(189,110){\line(1,0){7}}
\put(196,110){\line(1,2){7}}
\put(212,112){\makebox(0,0)[tl]{or}}
\put(238,110){\line(-1,-2){7}}
\put(238,110){\line(1,0){7}}
\put(245,110){\line(1,-2){7}}
\put(259,110){\line(-1,-2){7}}
\put(259,110){\line(1,0){7}}
\put(266,110){\line(1,2){7}}
\put(282,112){\makebox(0,0)[tl]{or}}
\put(308,110){\line(-1,-4){7}}
\put(308,110){\line(1,-2){7}}
\multiput(315,96)(4,0){4}{\line(1,0){2}}
\put(336,110){\line(-1,-2){7}}
\put(336,110){\line(1,0){7}}
\put(343,110){\line(1,2){7}}
\put(85,75){\makebox(0,0)[tl]{$(+,+)$}}
\put(166,75){\makebox(0,0)[tl]{$(+,-)$}}
\put(236,75){\makebox(0,0)[tl]{$(-,+)$}}
\put(309,75){\makebox(0,0)[tl]{$(-,-)$}}
\put(4,15){\line(-1,2){14}}
\put(4,15){\line(1,0){14}}
\put(18,15){\line(1,2){14}}
\put(48,17){\makebox(0,0)[tl]{$\to$}}
\put(87,15){\line(-1,4){7}}
\put(87,15){\line(1,2){7}}
\multiput(94,29)(4,0){4}{\line(1,0){2}}
\put(115,15){\line(-1,2){7}}
\put(115,15){\line(1,4){7}}
\put(137,18){\makebox(0,0)[tl]{or}}
\put(168,15){\line(-1,2){7}}
\put(168,15){\line(1,0){7}}
\put(175,15){\line(1,2){7}}
\put(189,15){\line(-1,2){7}}
\put(189,15){\line(1,0){7}}
\put(196,15){\line(1,2){7}}
\put(212,18){\makebox(0,0)[tl]{or}}
\put(238,15){\line(-1,2){7}}
\put(238,15){\line(1,0){7}}
\put(245,15){\line(1,-2){7}}
\put(259,15){\line(-1,-2){7}}
\put(259,15){\line(1,0){7}}
\put(266,15){\line(1,2){7}}
\put(282,18){\makebox(0,0)[tl]{or}}
\put(308,15){\line(-1,2){7}}
\put(308,15){\line(1,0){7}}
\put(315,15){\line(1,-2){7}}
\multiput(322,1)(4,0){4}{\line(1,0){2}}
\put(343,15){\line(-1,-2){7}}
\put(343,15){\line(1,0){7}}
\put(350,15){\line(1,2){7}}
\end{picture}
\end{center}
\caption{The two-step evolution of the $Z$-shape (top) and the cup-shape (bottom)}
\label{fig:Z-shape-and-cup-shape}
\end{figure}

Now fix $\eps>0$. Given that $N_{n_0}>0$, there is with probability one an integer $n_1$ such that
\begin{equation}
n\geq n_1 \quad\Rightarrow \quad \frac{\log N_n}{n}>\frac{pq\log 3}{2}-\eps.
\label{eq:Z-shape-lower-bound}
\end{equation}
Choose $\delta>0$ so that $\delta<4^{-n_1}$, and let $\UU$ be a covering of $K=L_f(0)$ by intervals of length less than $\delta$. By a standard argument, we may assume that $\UU\subset \{I_{2n,j}: n\geq n_1, 0\leq j<2^{2n}\}$. Moreover, we may assume that if $I_{2n,j}\in\UU$, then $j\in\Gamma_n$. There certainly is a smallest $n$ such that $\UU$ contains some interval $I_{2n,j}$; fix this $n$ for the remainder of the proof. If $|s_{2n,j}|\geq 4$, then $\Gamma_{n+1}$ contains only one of the four integers $4j,\dots,4j+3$ and a more efficient covering is obtained by replacing $I_{2n,j}$ with the corresponding subinterval ($I_{2n+2,4j}$ or $I_{2n+2,4j+3}$). Thus, we may assume that $|s_{2n,j}|\leq 2$ whenever $I_{2n,j}\in\UU$. 

If $j_1$ and $j_2$ are distinct members of $\Gamma_n$ such that $f_{2n}\equiv 0$ on $I_{2n,j_1}\cup I_{2n,j_2}$, then $L_f(0)\cap I_{2n,j_1}$ and $L_f(0)\cap I_{2n,j_2}$ are identical up to translation. Hence we may assume that $\UU$ contains either all of the intervals $I_{2n,j}$ on which $f_{2n}\equiv 0$ ($n$ still fixed), or none of these intervals. Similarly, if $j_1$ and $j_2$ are distinct members of $\Gamma_n^+$ such that $|s_{2n,j_1}|=|s_{2n,j_2}|=2$, then $L_f(0)\cap I_{2n,j_1}$ and $L_f(0)\cap I_{2n,j_2}$ are identical up to translation and, possibly, a reflection. Hence we can assume that $\UU$ contains either all of the intervals $I_{2n,j}$ with $j\in\Gamma_n^+$ and $s_{2n,j}=\pm 2$, or none of these intervals. The analogous statement holds for $j\in\Gamma_n^-$.

Now, since a $Z$-shape contains exactly one line segment of each of the three types considered in the last paragraph, it follows that the number of intervals $I_{2n,j}$ that lie in $\UU$ is at least $N_n$. Hence,
\begin{equation}
\sum_{I\in\UU} |I|^s\geq N_n\cdot 4^{-ns}
\label{eq:covering-lower-bound}
\end{equation}
for all $s>0$. Taking 
\begin{equation*}
s=\left(\frac{pq\log 3}{2}-\eps\right)/\log 4,
\end{equation*}
we finally obtain from \eqref{eq:Z-shape-lower-bound} and \eqref{eq:covering-lower-bound} that $\sum_{I\in\UU} |I|^s\geq 1$. Since $\eps$ was arbitrary, this implies that
\begin{equation}
\dim_H L_f(0)\geq \frac{pq\log 3}{2\log 4}>0,
\label{eq:HD-lower-bound}
\end{equation}
almost surely given that $N_{n_0}>0$ for some $n_0$.

In order to complete the proof, we must show that
\begin{equation}
\rP(N_n=0\ \forall\,n)=1-\frac{q}{p}
\label{eq:no-Z-shapes-ever}
\end{equation}
and
\begin{equation}
\rP(\#L_f(0)<\infty|N_n=0\ \forall\,n)=1.
\label{eq:then-finite}
\end{equation}
Let $S_n:=\omega_0+\dots+\omega_{n-1}$. Then $\{S_n\}$ is a simple random walk with parameter $p$, and since $p\geq 1/2$, $S_n$ takes positive values infinitely often with probability one, but will visit the value $-1$ only with probability $q/p$. Observe that $S_n=s_{n,0}$. Suppose that for some integer $n$, $S_{2n-1}=-1$, $S_{2n}=0$ and $S_{2n+1}=1$. Then $s_{2n-1,0}=-1$; $s_{2n,0}=0$ and $s_{2n,1}=-2$; and finally, $s_{2n+1,0}=1$ and $s_{2n+1,1}=s_{2n+1,2}=-1$. Now one sees that, regardless the value of $\omega_{2n+1}$, a $Z$-shape appears in $\GG_{n+1}$ (part of the graph of $f_{2n+2}$) somewhere on $I_{2n+1,1}\cup I_{2n+1,2}$. On the other hand, if $S_n\geq 0$ for every $n$, then $f_n\geq 0$ for each $n$ and no $Z$-shape ever appears. Thus,
\begin{equation*}
\rP(N_n>0\ \mbox{for some $n$})=\sP(\exists\,n: S_{2n-1}=-1, S_{2n}=0, S_{2n+1}=1)=q/p,
\end{equation*}
by the properties of the random walk. This gives \eqref{eq:no-Z-shapes-ever}.

Finally, note that the probability that $S_n\geq 0$ for all $n$ and $S_n=0$ for infinitely many $n$ is zero. Thus,
\begin{equation*}
\rP(S_n>0\ \mbox{for all but finitely many $n$}|N_n=0\ \forall\,n)=1.
\end{equation*}
But if $S_n\geq 0$ for all $n$ and $S_n=0$ for only finitely many $n$, then $\#\Gamma_n$ is eventually constant. More precisely, there is $n_0$ such that for every $n\geq n_0$, $\Gamma_{n+1}$ contains exactly one integer from $\{4j,\dots,4j+3\}$, for each $j\in \Gamma_n$. This clearly implies that the limit set $K=L_f(0)$ is finite. This proves \eqref{eq:then-finite}, and completes the proof of part (i) of the theorem.
\end{proof}

\begin{remark}
{\rm
The lower estimate \eqref{eq:HD-lower-bound} for $\dim_H L_f(0)$ can be improved by considering the ratio $N_{n+k}/N_n$ for values of $k$ larger than $2$, because there is a variety of ways for additional $Z$-shapes to appear in $\GG_{n+k}$, and this number increases exponentially as $k$ increases. The details are cumbersome, however. Using a computer program with the value $k=9$, the author has been able to establish, for the case $p=1/2$, that $\dim_H L_f(0)\geq .2548$ a.s. This is still well below the upper bound of $d_0\approx .3471$. The lower bounds appear to converge extremely slowly as $k\to\infty$, and it is merely a guess that they converge to $d_0$.
}
\end{remark}

\begin{proof}[Proof of Theorem \ref{thm:zero-set-rigid}(ii)]
Assume Model 1 with $p=1/2$. Fix, for the time being, an integer $k\geq 3$. Recall the random index sets defined by \eqref{eq:random-index-sets}, and define random variables
\begin{align*}
c_n&:=\#\{j\in\Gamma_n: f_{2n}\equiv 0\ \mbox{on}\ I_{2n,j}\},\\
u_n^{(i)}&:=\#\{j\in\Gamma_n^+: |s_{2n,j}|=2i\}, \qquad 1\leq i<k,\\
l_n^{(i)}&:=\#\{j\in\Gamma_n^-: |s_{2n,j}|=2i\}, \qquad 1\leq i<k,\\
u_n^{(k)}&:=\#\{j\in\Gamma_n^+: |s_{2n,j}|\geq 2k\},\\
l_n^{(k)}&:=\#\{j\in\Gamma_n^-: |s_{2n,j}|\geq 2k\}.
\end{align*}
The dynamics of these sequences of random variables depend on four cases regarding the signs $(\omega_{2n},\omega_{2n+1})$, as follows.

\bigskip
\begin{tabular}{llll}
$(+,+):$ & $c_{n+1}=l_n^{(1)}$ & $(-,-):$ & $c_{n+1}=u_n^{(1)}$\\
& $u_{n+1}^{(1)}=2c_n$ && $u_{n+1}^{(1)}=u_n^{(1)}+u_n^{(2)}$\\
& $l_{n+1}^{(1)}=l_n^{(1)}+l_n^{(2)}$ && $l_{n+1}^{(1)}=2c_n$\\
& $u_{n+1}^{(i)}=u_n^{(i-1)}, \quad 2\leq i<k\quad$ && $u_{n+1}^{(i)}=u_n^{(i+1)}, \quad 2\leq i<k$\\
& $u_{n+1}^{(k)}=u_n^{(k-1)}+u_n^{(k)}$ && $l_{n+1}^{(i)}=l_n^{(i-1)}, \quad 2\leq i<k$\\
& $l_{n+1}^{(i)}=l_n^{(i+1)}, \quad 2\leq i<k$ && $l_{n+1}^{(k)}=l_n^{(k-1)}+l_n^{(k)}$\\
& $l_{n+1}^{(k)}\leq l_n^{(k)}$ && $u_{n+1}^{(k)}\leq u_n^{(k)}$
\end{tabular}

\bigskip
\begin{tabular}{llll}
$(+,-)$: & $c_{n+1}=2c_n$ & $(-,+)$: & $c_{n+1}=2c_n$\\
& $u_{n+1}^{(1)}=2c_n+u_n^{(1)}$ && $u_{n+1}^{(i)}=u_n^{(i)}, \quad 1\leq i\leq k$\\
& $u_{n+1}^{(i)}=u_n^{(i)}, \quad 2\leq i\leq k\ \qquad$ && $l_{n+1}^{(1)}=2c_n+l_n^{(1)}$\\
& $l_{n+1}^{(i)}=l_n^{(i)}, \quad 1\leq i\leq k$ && $l_{n+1}^{(i)}=l_n^{(i)}, \quad 2\leq i\leq k$
\end{tabular}

\bigskip
\noindent Let ${\bf z}_n:=\big(c_n,u_n^{(1)},\dots,u_n^{(k)},l_n^{(1)},\dots,l_n^{(k)}\big)$, and let $\FF_n$ be the $\sigma$-algebra generated by the random vectors ${\bf z}_1,\dots,{\bf z}_n$. Let $\sigma_n^{(i)}:=u_n^{(i)}+l_n^{(i)}$, $i=1,\dots,k$. Since $p=1/2$, the four cases above all occur with probability $1/4$, and hence we have
\begin{align}
\begin{split}
\rE(c_{n+1}|\FF_n)&=c_n+\frac14 \sigma_n^{(1)},\\
\rE(\sigma_{n+1}^{(1)}|\FF_n)&=2c_n+\frac34 \sigma_n^{(1)}+\frac14 \sigma_n^{(2)},\\
\rE(\sigma_{n+1}^{(i)}|\FF_n)&=\frac14 \sigma_n^{(i-1)}+\frac12 \sigma_n^{(i)}+\frac14 \sigma_n^{(i+1)}, \qquad 2\leq i<k,\\
\rE(\sigma_{n+1}^{(k)}|\FF_n)&\leq \frac14 \sigma_n^{(k-1)}+\sigma_n^{(k)}.
\end{split}
\label{eq:conditional-expectations}
\end{align}
Put ${\bf x}_n:={\big(c_n,\sigma_n^{(1)},\dots,\sigma_n^{(k)}\big)}^t$. By \eqref{eq:conditional-expectations},
\begin{equation}
\rE({\bf x}_{n+1}|\FF_n)\leq A_k {\bf x}_n, 
\label{eq:conditional-domination}
\end{equation}
where $A_k$ is the $(k+1)\times(k+1)$ tridiagonal matrix 
\begin{equation*}
A_k:=\begin{bmatrix}
1 & 1/4 & 0 & \dots & \dots & 0\\
2 & 3/4 & 1/4 & \ddots && \vdots\\
0 & 1/4 & 1/2 & 1/4 & \ddots & \vdots\\
\vdots & \ddots & \ddots & \ddots & \ddots & 0\\
\vdots & & \ddots & 1/4 & 1/2 & 1/4\\
0 & \dots & \dots & 0 & 1/4 & 1
\end{bmatrix}.
\end{equation*}
Let $\rho_k$ denote the spectral radius of $A_k$. Since $A_k$ is nonnegative $\rho_k$ is an eigenvalue of $A_k$, and since $A_k$ is irreducible, the Perron-Frobenius theorem guarantees the existence of a positive left eigenvector ${\bf v}_k$ of $A_k$ corresponding to $\rho_k$. It follows by \eqref{eq:conditional-domination} that the process
\begin{equation}
X_n:=\rho_k^{-n}{\bf v}_k{\bf x}_n, \qquad n\in\ZZ_+
\label{eq:supermartingale}
\end{equation}
is a positive supermartingale, which by the Martingale Convergence Theorem converges almost surely to a finite nonnegative limit $X_\infty$. Let $\delta$ be the smallest entry of the vector ${\bf v}_k$. Then $\delta>0$, and for a given $\eps>0$, \eqref{eq:supermartingale} implies that for all sufficiently large $n$,
\begin{equation*}
c_n+\sum_{i=1}^k \sigma_n^{(i)} \leq \delta^{-1}\rho_k^n X_n \leq \delta^{-1}(X_\infty+\eps)\rho_k^n.
\end{equation*}
Thus, by the zero criterion, the number of intervals $I_{2n,j}$ needed to cover $L_f(0)$ grows at most at rate $\rho_k^n$. Consequently,
\begin{equation*}
\dim_H L_f(0)\leq \frac{\log\rho_k}{\log 4} \quad \mbox{a.s.}
\end{equation*}
The proof will be complete if we can show that
\begin{equation}
\liminf_{k\to\infty} \rho_k\leq \frac{1+\sqrt{5}}{2}.
\label{eq:liminf-inequality}
\end{equation}
Let $\tilde{A}_k$ be the $k\times k$ matrix obtained by deleting the last row and last column of $A_k$. For completeness, we define $\tilde{A}_1=[1]$ and $\tilde{A}_2=\begin{bmatrix} 1 & 1/4\\2 & 3/4\end{bmatrix}$. Let
\begin{equation*}
\xi_k(\lambda):=\det(A_k-\lambda I_{k+1}), \qquad \zeta_k(\lambda):=\det(\tilde{A}_k-\lambda I_{k})
\end{equation*}
be the characteristic polynomials of $A_k$ and $\tilde{A}_k$, respectively. Then
\begin{equation}
\xi_k(\lambda)=(1-\lambda)\zeta_k(\lambda)-\frac{1}{16}\zeta_{k-1}(\lambda),
\label{eq:xi-and-zeta}
\end{equation}
and $\zeta_k$ satisfies the recursion
\begin{equation}
\zeta_k(\lambda)=\left(\frac12-\lambda\right)\zeta_{k-1}(\lambda)-\frac{1}{16}\zeta_{k-2}(\lambda), \qquad k\geq 3,
\label{eq:zeta-recursion}
\end{equation}
with initial conditions
\begin{equation}
\zeta_1(\lambda)=1-\lambda, \qquad \zeta_2(\lambda)=\lambda^2-\frac74\lambda+\frac14.
\label{eq:zeta-initial}
\end{equation}
For fixed real $\lambda>1$, the solution of the system \eqref{eq:zeta-recursion}, \eqref{eq:zeta-initial} can be written as
\begin{equation}
\zeta_k(\lambda)=C_1(\lambda)\gamma_1^{k-1}(\lambda)+C_2(\lambda)\gamma_2^{k-1}(\lambda),
\label{eq:zeta-solution}
\end{equation}
where
\begin{equation}
\gamma_1(\lambda):=\frac12\left(\frac12-\lambda-\sqrt{\lambda^2-\lambda}\right), \qquad
\gamma_2(\lambda):=\frac12\left(\frac12-\lambda+\sqrt{\lambda^2-\lambda}\right),
\label{eq:gammas}
\end{equation}
and
\begin{equation}
C_1(\lambda):=\frac{\zeta_1(\lambda)\gamma_2(\lambda)-\zeta_2(\lambda)}{\gamma_2(\lambda)-\gamma_1(\lambda)}, \qquad 
C_2(\lambda):=\frac{\zeta_2(\lambda)-\zeta_1(\lambda)\gamma_1(\lambda)}{\gamma_2(\lambda)-\gamma_1(\lambda)}.
\label{eq:zeta-coefficients}
\end{equation}
Substituting \eqref{eq:zeta-solution} into \eqref{eq:xi-and-zeta} and some rearranging finally gives
\begin{equation}
\xi_k(\lambda)=\tilde{C}_1(\lambda)\gamma_1^{k-2}(\lambda)+\tilde{C}_2(\lambda)\gamma_2^{k-2}(\lambda), \qquad k\geq 3,
\label{eq:xi-solution}
\end{equation}
for real $\lambda>1$, where
\begin{equation}
\tilde{C}_i(\lambda):=\left((1-\lambda)\gamma_i(\lambda)-\frac{1}{16}\right)C_i(\lambda), \qquad i=1,2.
\label{eq:tilde-constants}
\end{equation}
Now observe that $\lambda>1$ implies $|\gamma_2(\lambda)|\leq 1/4$ and $\gamma_1(\lambda)<0$, with $|\gamma_1(\lambda)|>1$ if and only if $\lambda>25/16$. If $\liminf\rho_k\leq 25/16$ we are done, since $25/16=1.5625<(1+\sqrt{5})/2$. So assume $\liminf\rho_k>25/16$; then we can find $\eta>0$ such that $\rho_k\geq 25/16+\eta$ for all sufficiently large $k$, and hence $|\gamma_1(\rho_k)|\geq 1+\eps$ for all sufficiently large $k$, for some $\eps>0$ depending on $\eta$. Since $\xi_k(\rho_k)=0$ for all $k$, this, together with \eqref{eq:xi-solution}, implies that $\tilde{C}_1(\rho_k)$ must tend to zero as $k\to\infty$. Hence, by \eqref{eq:tilde-constants}, either $C_1(\rho_k)\to 0$ or $(1-\rho_k)\gamma_1(\rho_k)\to 1/16$. It is easy to see that the latter is impossible, and so $\rho_*:=\liminf\rho_k$ must be a solution of $C_1(\rho_*)=0$, as $C_1(\lambda)$ is continuous in $\lambda$. Routine algebra using \eqref{eq:zeta-initial}, \eqref{eq:gammas} and \eqref{eq:zeta-coefficients} shows that the only solution of $C_1(\rho_*)=0$ is $\rho_*=(1+\sqrt{5})/2$. This proves \eqref{eq:liminf-inequality}, completing the proof of the theorem.
\end{proof}

\subsection{Dimension of the maximum set}

\begin{proof}[Proof of Theorem \ref{thm:maximal-set-flexible}]
Assume Model 2. Slightly abusing notation, put $M_n:=\max\{f_n(x): 0\leq x\leq 1\}$ and $\MM_n:=\{x\in[0,1]: f_n(x)=M_n\}$. Note, since $\phi(x)+(1/2)\phi(2x)\leq 1/2$, that
\begin{equation*}
M_{2n}\leq \sum_{i=0}^{n-1}\frac12\left(\frac14\right)^i, \qquad n\in\ZZ_+. 
\end{equation*}
Define random index sets
\begin{equation*}
\Lambda_n^1:=\left\{j: 0\leq j<2^{2n}, f_{2n}\equiv\sum_{i=0}^{n-1}\frac12{\left(\frac14\right)}^i\ \mbox{on}\ I_{2n,j}\right\}, \quad n\in\ZZ_+,
\end{equation*}
and put $X_n^1:=\#\Lambda_n^1$. Then $\{X_n^1\}$ is a Galton-Watson (GW) process with initial value $X_0^1=1$ and offspring distribution $\boldsymbol{\pi}=(\pi_0,\pi_1,\pi_2)=(1-2p^2 q-p^3,2p^2 q,p^3)$, where $q=1-p$. The offspring distribution has mean $\mu:=\sum_{i=0}^2 i\pi_i=2p^2$ and probability generating function $h(t):=\sum_{i=0}^2\pi_i t^i=(1-2p^2 q-p^3)+2p^2 qt+p^3 t^2$. Let $\rho:=\sP(X_n^1\to 0)$. According to the basic theory of the GW process (e.g. \cite{Athreya}), $\rho=1$ when $\mu\leq 1$, and in that case, $M_f<2/3$ with probability one. Assume from now on that $\mu>1$; that is, $p>1/\sqrt{2}$. Then $\rho$ is the smallest positive number satisfying $t=h(t)$, so that $\rho=(1-2p^2 q-p^3)/p^3$, and
\begin{equation*}
\rP(M_f=2/3)=\sP(X_n^1>0\ \forall\, n)=1-\rho=\frac{2p^2-1}{p^3}.
\end{equation*}
This establishes part (i) of the theorem. Next, define the random set
\begin{equation*}
F_1:=\bigcap_{n=0}^\infty	\bigcup_{j\in\Lambda_n^1}I_{2n,j}.
\end{equation*}
Then $F_1=\emptyset$ if and only if $X_n^1\to 0$, and given that $F_1\neq\emptyset$, $\dim_H F_1=\log\mu/\log 4$ a.s. by Theorem 1.1 of Mauldin and Williams \cite{MW}.

Put $\tau_0\equiv 0$. Proceeding inductively, suppose processes $\{X_n^1\},\dots,\{X_n^k\}$ and random variables $\tau_0,\dots,\tau_{k-1}$ have been defined and that $X_n^r\to 0$ for $r=1,\dots,k$. Let $N_k:=\min\{n: X_n^k=0\}$, and define
\begin{equation*}
\tau_k:=\inf\{n\geq\tau_{k-1}+N_k: \MM_{2n}\ \mbox{contains an interval}\}.	
\end{equation*}
Since $p\geq 1/2$, $\tau_k$ is finite almost surely. (Put $m=\tau_{k-1}+N_k$, and let $x_0:=j/2^{2m}$ be a point of maximum of $f_{2m}$. The slope of $f_{2m+n}$ directly to the right of $x_0$ starts with a nonpositive value and follows (as a function of $n\in\ZZ_+$) a simple random walk with parameter $p$, so it will eventually reach $0$.) Note that
\begin{equation*}
M_{2(\tau_k+n)}\leq M_{2\tau_k}+\sum_{i=0}^{n-1}\frac12{\left(\frac14\right)}^{\tau_k+i}, \qquad n\in\ZZ_+.
\end{equation*}
Define the random index sets
\begin{equation*}
\Lambda_n^{k+1}:=\left\{j: 0\leq j<2^{2(\tau_k+n)}, f_{2(\tau_k+n)}\equiv M_{2\tau_k}+\sum_{i=0}^{n-1}\frac12{\left(\frac14\right)}^{\tau_k+i}\ \mbox{on}\ I_{2(\tau_k+n),j}\right\},
\end{equation*}
for $n\in\ZZ_+$, and put $X_n^{k+1}:=\#\Lambda_n^{k+1}$. By definition of $\tau_k$, $\Lambda_0^{k+1}\neq\emptyset$ and so $X_0^{k+1}\geq 1$. Now $\{X_n^{k+1}\}$ is again a GW process with offspring distribution $\boldsymbol{\pi}$, and it depends on the preceding processes $\{X_n^1\},\dots,\{X_n^k\}$ only through the value of $X_0^{k+1}$. Thus,
\begin{align}
\begin{split}
\rP(X_n^{k+1}\to & \ 0\,|X_n^1\to 0,\dots,X_n^k\to 0)\\
&\leq \sP(X_n^{k+1}\to 0\,|X_n^1\to 0,\dots,X_n^k\to 0, X_0^{k+1}=1)=\rho<1.
\end{split}
\label{eq:again-extinct}
\end{align}
Define the random set
\begin{equation*}
F_{k+1}:=\bigcap_{n=0}^\infty	\bigcup_{j\in\Lambda_n^{k+1}}I_{2(\tau_k+n),j}.
\end{equation*}
Then $F_{k+1}=\emptyset$ if and only if $X_n^{k+1}\to 0$ as $n\to\infty$, and given that $X_n^{k+1}>0$ for all $n$, $\dim_H F_{k+1}=\log\mu/\log 4$ a.s. 

Now \eqref{eq:again-extinct} implies that with probability one, there will eventually be a $k\in\NN$ such that $X_n^k>0$ for all $n$, and for that $k$, we have $F_k=\MM_f$, and
\begin{equation*}
M_f=M_{2\tau_{k-1}}+\sum_{i=0}^\infty \frac12\left(\frac14\right)^{\tau_{k-1}+i} = M_{2\tau_{k-1}}+\frac23\left(\frac14\right)^{\tau_{k-1}}.
\end{equation*}
Part (ii) of the theorem now follows.
\end{proof}

\section{Open problems} \label{sec:open}

There are many natural questions left to answer. A few are listed here.

\medskip
\noindent {\bf Problem 1.} Does there exist $f\in\TT_v$ such that $\dim_H L_f(y)=0$ for every $y\in\RR$?

\medskip
\noindent {\bf Problem 2.} Is it true for all $f\in\TT_v$ that $L_f(y)$ is finite for Lebesgue-almost every $y$? Is this true for the Gray Takagi function?

\medskip
\noindent {\bf Problem 3.} (Random case, Model 1) Prove or disprove that $\dim_H L_f(0)=d_0$ a.s. when $p=1/2$.

\medskip
\noindent {\bf Problem 4.} (Random case, Model 2) What is the best (smallest) bound $s_1$ such that, for each $y\in\RR$, $\rP(\dim_H L_f(y)\leq s_1)=1$? More strongly, what is the smallest $s_2$ such that
$\rP(\dim_H L_f(y)\leq s_2\ \forall\,y\in\RR)=1$? (Obviously, $s_1\leq s_2\leq d_v^*$.) In the case of Model 1, only the first question is of interest, as $s_2=1/2$ in view of Theorem \ref{thm:rigid-upper-bound}.

\medskip
\noindent {\bf Problem 5.} (Random case, Model 2) Prove or disprove that $\dim_H \MM_f=0$ a.s. when $p\leq 1/\sqrt{2}$, and that $\MM_f$ is finite a.s. when $p<1/2$. What can one say about the distribution of $M_f$ in these cases?

\section*{Acknowledgments}
The author is grateful to the referee for a careful reading of the manuscript and for suggesting improvements to the presentation of the paper.

\footnotesize

\end{document}